\documentclass[10pt]{amsart}     
\usepackage{amssymb}      
\usepackage{graphicx}
\usepackage{a4wide}
\usepackage{ytableau}
\usepackage{comment}
\usepackage{xcolor}
\usepackage{pgf,tikz}
\usetikzlibrary{arrows.meta}
\usetikzlibrary{calc}
\usetikzlibrary{arrows, automata}
\usepackage{subcaption} 
\usepackage{mathtools} 
\theoremstyle{definition} 
\newtheorem{lemma}{Lemma}[section] 
\newtheorem{corollary}[lemma]{Corollary} 
\newtheorem{definition}[lemma]{Definition}
\newtheorem{problem}[lemma]{Problem}
\newtheorem{theorem}[lemma]{Theorem} 
 
\newtheorem{proposition}[lemma]{Proposition}
\newtheorem{remark}[lemma]{Remark} 
\newtheorem{example}[lemma]{Example} 
\usepackage[backend=biber,maxbibnames=5]{biblatex}
\DeclareFieldFormat{doi}{\href{https://dx.doi.org/#1}{\textsc{doi}}}
\DeclareFieldFormat{url}{\href{#1}{\textsc{url}}}
\renewbibmacro*{doi+eprint+url}{
  \printfield{doi}
  \newunit\newblock
  \iftoggle{bbx:eprint}{
    \usebibmacro{eprint}
  }{}
  \newunit\newblock
  \iffieldundef{doi}{
    \usebibmacro{url+urldate}}
  {}
}
\renewbibmacro{in:}{}
\AtEveryBibitem{
  \ifentrytype{book}
  {\clearfield{pages}}
}

\usepackage[colorlinks=true,linkcolor=blue,citecolor=magenta]{hyperref} 

\makeatletter
\@namedef{subjclassname@2020}{\textup{2020} Mathematics Subject Classification}     
\makeatother

\def\QQ{\mathbb{Q}}
\def\CC{\mathbb{C}}
\def\oc{\mathrm{OC}}
\def\xx{{\bf x}}

\title[Straight polyomino tilings and special rim-hook tableaux]{Straight polyomino tilings of rectangles\\ and special rim-hook tableaux}  
\author{Mudit Aggarwal}
\address{University of British Columbia, Vancouver, British Columbia, Canada.} 
\email{muditagg@math.ubc.ca}

\author{Hrishik Koley}
\address{Indian Statistical Institute, Bengaluru, India.}
\email{hrishik.math@outlook.com}

\author{Samrith Ram}
\address{Indraprastha Institute of Information Technology Delhi (IIIT-Delhi), New Delhi 110020, India.}
\email{samrithram@gmail.com}

\begin{document}
\maketitle
\begin{abstract}
We derive explicit rational generating functions for weighted tilings of $2k\times n$ rectangles by straight $k\times 1$ tiles. Our approach combines a decomposition by fault lines with a Hadamard-product framework. Tools from algebraic combinatorics are used together with an identity of Littlewood on Schur expansions of plethystic compositions of elementary symmetric functions. This translates the tiling problem into a combinatorial framework via special rim-hook tableaux. On the tiling side, Graham's theorem on fault-free tilings provides the key input needed to complete the analysis.
\end{abstract}
\tableofcontents 
\section{Introduction}\label{sec:introduction}

The exact enumeration of tilings of planar regions is classical, but completely explicit formulas are known only in special families. For domino tilings of $m\times n$ rectangles, the reduction to perfect matchings yields the exact product formulas of Temperley and Fisher \cite{TemperleyFisher1961} and Kasteleyn~ \cite{Kasteleyn63}. Another benchmark family is provided by Aztec diamonds: Elkies, Kuperberg, Larsen, and Propp~\cite{ElkiesKuperbergLarsenPropp1992} proved that the Aztec diamond of order $n$ has exactly $2^{n(n+1)/2}$ domino tilings. These examples illustrate how exceptional closed product formulas are, even for highly structured regions.

When one dimension is fixed, exact information is often encoded by rational generating functions or linear recurrences rather than product formulas. Read \cite{Read1980} developed transfer-matrix techniques for domino strip tilings. Klarner and Pollack \cite{KlarnerPollack1980} computed fixed-width domino generating functions and gave an algorithm for producing rational forms, while Hock and McQuistan \cite{HockMcQuistan1984} derived explicit recurrences in the strip length for additional small-width cases. Stanley \cite{Stanley1985} later showed that if $A_{m,n}$ denotes the number of domino tilings of a $m\times n$ rectangle, then $\sum_{n\geq 0}A_{m,n}x^n$ is rational and analyzed its numerator and denominator in detail. Propp \cite{Propp2001} subsequently proved a reciprocity theorem for these fixed-width numbers, showing that the same linear recurrence extends to negative indices in a controlled way.

Other exact rectangle-enumeration results arise from additional local restrictions. Ruskey and Woodcock \cite{RuskeyWoodcock2009} obtained ordinary generating functions for fixed-height tatami tilings of rectangles and explicit formulas as sums of binomial coefficients. Erickson, Ruskey, Woodcock, and Schurch \cite{EricksonRuskeyWoodcockSchurch2011} extended this circle of ideas to monomer-dimer tatami tilings, proving that for fixed height, the generating function is again rational. Exact enumeration becomes accessible using the transfer-matrix approach when one dimension is fixed and the admissible local structure is sufficiently rigid.

Complexity-theoretic results help explain why such exact formulas are uncommon in full generality. In Valiant's \cite{val79complexity} counting-complexity framework, for any fixed finite tile set, the decision problem of tileability of a finite input region is in NP (a tiling is a polynomial-size certificate), while the corresponding counting problem is a function in \#P. Moore and Robson \cite{MooreRobson2001} proved NP-completeness for tileability of finite square-lattice regions by right trominoes. Pak and Yang~\cite{PakYang2013} later showed that there exists a finite set of rectangles for which tileability of simply connected regions is NP-complete, and counting such tilings is \#P-complete. In dimension three, Chan and Pak \cite{ChanPak24} proved that deciding whether two regions have the same number of domino tilings is not in the polynomial hierarchy unless that hierarchy collapses to a finite level. These hardness results concern variable input regions; by contrast, we study the explicit counting problem on the structured rectangle family $2k\times n$.

Among straight polyomino tilings of rectangles, Klarner \cite[Thm.~5]{MR248643} proved a basic divisibility criterion: a rectangle can be tiled by $k\times 1$ bars if and only if one of its side lengths is divisible by $k$. Kreweras \cite{Kreweras1995} derived a sixth-order recurrence for tilings of a $3\times n$ rectangle by trominoes when both tromino shapes are allowed. Graham \cite{Graham1981} studied the existence of fault-free tilings of rectangles by coprime rectangles. More recently, Aggarwal and Ram \cite{MR4537778} obtained explicit generating functions for tilings of $m\times n$ rectangles by $k\times 1$ tiles in the narrow regime $m<2k$. Using Aggarwal and Ram \cite[Theorem~8]{MR4537778} it is straightforward to show that for $k\leq m<2k$, the number of tilings of an $m\times k\ell$ rectangle using $k\times 1$ tiles equals
\begin{equation}
\sum_{j=0}^{\ell}(m-k+1)^j\binom{kj+\ell-j}{\ell-j}.
\end{equation}
To the best of our knowledge, no other general exact enumeration formulas are known for rectangle tilings by straight bars of length greater than two. This article treats the boundary case $m=2k$, which lies just beyond that range, and gives an explicit weighted generating function in that setting.

Let $\mathcal{T}(m,n;k)$ denote the collection of all tilings of an $m\times n$ rectangle with $k\times 1$ tiles (called straight $k$-polyominoes or $k$-bars). By Klarner's divisibility criterion, the set $\mathcal{T}(m,n;k)$ is nonempty precisely when $k$ divides either $m$ or $n$. To each tiling $T\in \mathcal{T}(m,n;k)$, we associate a weight $w(T):=a^vb^h$, where $v$ and $h$ are the numbers of vertical and horizontal tiles in $T$. Define
\begin{align*}
  t(m,n;k):=\sum_{T\in \mathcal{T}(m,n;k)}w(T),
\end{align*}
and note that $t(m,n;k)$ is a homogeneous polynomial in $a$ and $b$ of degree $mn/k$.

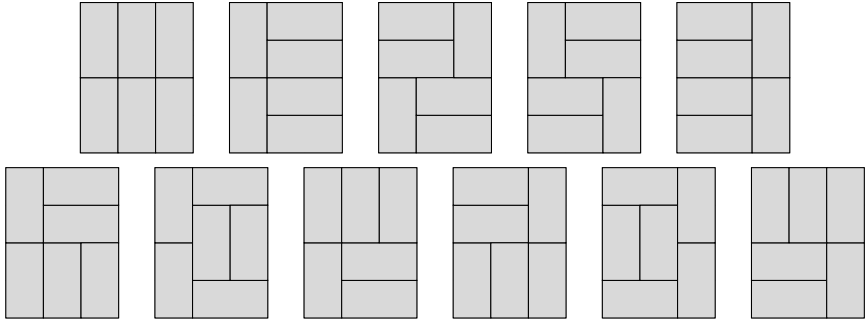
\begin{figure}
  \centering
  \tikzset{
    dominoH/.style={fill=gray!30,draw=black},
    dominoV/.style={fill=gray!30,draw=black}
  }

  \begin{tikzpicture}[scale=0.5]
    \draw[step=1cm,gray] (0,0) grid (3,4);
    \fill[dominoV] (0,0) rectangle (1,2);
    \fill[dominoV] (0,2) rectangle (1,4);
    \fill[dominoV] (1,0) rectangle (2,2);
    \fill[dominoV] (1,2) rectangle (2,4);
    \fill[dominoV] (2,0) rectangle (3,2);
    \fill[dominoV] (2,2) rectangle (3,4);
  \end{tikzpicture}
  \quad
  \begin{tikzpicture}[scale=0.5]
    \draw[step=1cm,gray] (0,0) grid (3,4);
    \fill[dominoV] (0,0) rectangle (1,2);
    \fill[dominoV] (0,2) rectangle (1,4);
    \fill[dominoH] (1,0) rectangle (3,1);
    \fill[dominoH] (1,1) rectangle (3,2);
    \fill[dominoH] (1,2) rectangle (3,3);
    \fill[dominoH] (1,3) rectangle (3,4);
  \end{tikzpicture}
  \quad
  \begin{tikzpicture}[scale=0.5]
    \draw[step=1cm,gray] (0,0) grid (3,4);
    \fill[dominoV] (0,0) rectangle (1,2);
    \fill[dominoH] (0,2) rectangle (2,3);
    \fill[dominoH] (0,3) rectangle (2,4);
    \fill[dominoH] (1,0) rectangle (3,1);
    \fill[dominoH] (1,1) rectangle (3,2);
    \fill[dominoV] (2,2) rectangle (3,4);
  \end{tikzpicture}
  \quad
  \begin{tikzpicture}[scale=0.5]
    \draw[step=1cm,gray] (0,0) grid (3,4);
    \fill[dominoH] (0,0) rectangle (2,1);
    \fill[dominoH] (0,1) rectangle (2,2);
    \fill[dominoV] (0,2) rectangle (1,4);
    \fill[dominoH] (1,2) rectangle (3,3);
    \fill[dominoH] (1,3) rectangle (3,4);
    \fill[dominoV] (2,0) rectangle (3,2);
  \end{tikzpicture}
  \quad
  \begin{tikzpicture}[scale=0.5]
    \draw[step=1cm,gray] (0,0) grid (3,4);
    \fill[dominoH] (0,0) rectangle (2,1);
    \fill[dominoH] (0,1) rectangle (2,2);
    \fill[dominoH] (0,2) rectangle (2,3);
    \fill[dominoH] (0,3) rectangle (2,4);
    \fill[dominoV] (2,0) rectangle (3,2);
    \fill[dominoV] (2,2) rectangle (3,4);
  \end{tikzpicture}

  \medskip

  \begin{tikzpicture}[scale=0.5]
    \draw[step=1cm,gray] (0,0) grid (3,4);
    \fill[dominoV] (0,0) rectangle (1,2);
    \fill[dominoV] (0,2) rectangle (1,4);
    \fill[dominoV] (1,0) rectangle (2,2);
    \fill[dominoH] (1,2) rectangle (3,3);
    \fill[dominoH] (1,3) rectangle (3,4);
    \fill[dominoV] (2,0) rectangle (3,2);
  \end{tikzpicture}
  \quad
  \begin{tikzpicture}[scale=0.5]
    \draw[step=1cm,gray] (0,0) grid (3,4);
    \fill[dominoV] (0,0) rectangle (1,2);
    \fill[dominoV] (0,2) rectangle (1,4);
    \fill[dominoH] (1,0) rectangle (3,1);
    \fill[dominoV] (1,1) rectangle (2,3);
    \fill[dominoH] (1,3) rectangle (3,4);
    \fill[dominoV] (2,1) rectangle (3,3);
  \end{tikzpicture}
  \quad
  \begin{tikzpicture}[scale=0.5]
    \draw[step=1cm,gray] (0,0) grid (3,4);
    \fill[dominoV] (0,0) rectangle (1,2);
    \fill[dominoV] (0,2) rectangle (1,4);
    \fill[dominoH] (1,0) rectangle (3,1);
    \fill[dominoH] (1,1) rectangle (3,2);
    \fill[dominoV] (1,2) rectangle (2,4);
    \fill[dominoV] (2,2) rectangle (3,4);
  \end{tikzpicture}
  \quad
  \begin{tikzpicture}[scale=0.5]
    \draw[step=1cm,gray] (0,0) grid (3,4);
    \fill[dominoV] (0,0) rectangle (1,2);
    \fill[dominoH] (0,2) rectangle (2,3);
    \fill[dominoH] (0,3) rectangle (2,4);
    \fill[dominoV] (1,0) rectangle (2,2);
    \fill[dominoV] (2,0) rectangle (3,2);
    \fill[dominoV] (2,2) rectangle (3,4);
  \end{tikzpicture}
  \quad
  \begin{tikzpicture}[scale=0.5]
    \draw[step=1cm,gray] (0,0) grid (3,4);
    \fill[dominoH] (0,0) rectangle (2,1);
    \fill[dominoV] (0,1) rectangle (1,3);
    \fill[dominoH] (0,3) rectangle (2,4);
    \fill[dominoV] (1,1) rectangle (2,3);
    \fill[dominoV] (2,0) rectangle (3,2);
    \fill[dominoV] (2,2) rectangle (3,4);
  \end{tikzpicture}
  \quad
  \begin{tikzpicture}[scale=0.5]
    \draw[step=1cm,gray] (0,0) grid (3,4);
    \fill[dominoH] (0,0) rectangle (2,1);
    \fill[dominoH] (0,1) rectangle (2,2);
    \fill[dominoV] (0,2) rectangle (1,4);
    \fill[dominoV] (1,2) rectangle (2,4);
    \fill[dominoV] (2,0) rectangle (3,2);
    \fill[dominoV] (2,2) rectangle (3,4);
  \end{tikzpicture}
  \caption{Domino tilings of a $4\times 3$ rectangle: $t(4,3;2)=a^6+6a^4b^2+4a^2b^4$.}
  \label{fig:11tilings}
\end{figure}
\begin{example}
  We have $t(4,3;2)=a^6+6a^4b^2+4a^2b^4$ as shown in Figure \ref{fig:11tilings}.
\end{example}
We are mainly interested in the following problem. 
\begin{problem}
  Determine the generating function $\sum_{n\geq 0}t(m,n;k)x^n$.
\end{problem}
Define
\begin{equation}\label{eq:Fk}
F_k(x)=F_k(x;a,b):=\sum_{n\geq 0}t(2k,n;k)x^n.  
\end{equation}
The following theorem is our main result.

\begin{theorem}\label{thm:main}
For each integer $k>1$
\[
F_k(x;a,b)=\frac{(1 - b^{2k}x^k)^{k-1}f_{k-1}(x;a,b^k)}
     {p(\sqrt{x})p(-\sqrt{x})(1 - b^{2k}x^k)^{k-1}f_{k-1}(x;a,-b^k)-(k-1)a^kb^kx^kf_{k-1}(x;a,b^k)},
\]
where $p(x)=1-ax-b^kx^k$ and
\begin{align}
f_N(x;a,b) := \sum_{s=0}^{\frac{N(N+1)}{2}} c_s(N)\, a^{{\rm rem}(2s,N+1)}\, b^{\left\lfloor\frac{2s}{N+1}  \right\rfloor} x^s.  \label{eq:fnxab}   
\end{align}
Here ${\rm rem}(2s,N+1)$ denotes the remainder when $2s$ is divided by $N+1$ and 
\begin{equation*}
c_s(N)=\begin{cases}  
1 & \text{if }s=0,\\
(-1)^{q_1+1} \dbinom{N - r_1 }{q_1} \dbinom{q_1+r_1-\left\lfloor \frac{N}{2} \right\rfloor - 1 }{q_1} & \text{if } \lfloor\frac{2s}{N+1}  \rfloor \mbox{ is odd} \text{ and } r_1 > \lfloor\frac{N}{2}\rfloor, \\[8pt]
(-1)^{q_0} \dbinom{  r_0-1 }{ q_0-1} \dbinom{\left\lfloor \frac{N}{2} \right\rfloor - r_0 + q_0}{q_0} & \text{if } \lfloor\frac{2s}{N+1}  \rfloor \mbox{ is even} \text{ and } r_0 \leq \lfloor\frac{N}{2}\rfloor, \\
0 & \text{otherwise,}
\end{cases} 
\end{equation*} 
where $q_j$ and $r_j$ are the quotient and remainder when $s$ is divided by $N+j$ for $j\in \{0,1\}$.
\end{theorem}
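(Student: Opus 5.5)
We follow the route indicated in the abstract: a fault-line decomposition, a Hadamard-product description of the fault-free pieces, and a Schur-function evaluation via Littlewood's plethysm identity.

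\textbf{Fault-line decomposition.} Call a tiling of the $2k\times n$ rectangle \emph{fault-free} (with respect to vertical lines) if no vertical grid line $x=j$, $0<j<n$, crosses the rectangle without cutting a tile. Every tiling factors uniquely as a concatenation of fault-free tilings of $2k\times n_i$ blocks. Let $G_k(x)$ be the weighted generating function of fault-free tilings, with no constant term. Then
\[
F_k = \frac{1}{1-G_k},
\]
and it suffices to compute $G_k$, or equivalently $1/F_k$. The target denominator already has the expected shape $1-G_k$ after clearing the common factor $(1-b^{2k}x^k)^{k-1}f_{k-1}(x;a,b^k)$.

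\textbf{Splitting by horizontal line.} We next split according to whether the horizontal line $y=k$ is a fault.
\begin{itemize}
\item If it is, the tiling is a pair of tilings of $k\times n$ strips. The generating function of a single strip is $1/p$ with $x\to$ the column variable, since a $k\times n$ strip is a sequence of vertical bars (weight $a$, width $1$) and $k$-stacks of horizontal bars (weight $b^k$, width $k$). Hence $t(k,n;k)=[x^n]\,1/(1-ax-b^kx^k)$.
\item The pairs of equal-length strips form the Hadamard square $\frac1p\ast\frac1p$. This accounts for the factor $p(\sqrt x)p(-\sqrt x)$: the Hadamard product of two rational functions with the same denominator can be written through the roots of $p$, and $\prod_i(1-\alpha_i^2 x)$ equals $p(\sqrt x)p(-\sqrt x)$ up to normalisation.
\item If $y=k$ is not a fault, some vertical bar straddles it. By Graham's theorem on fault-free tilings, a tiling of a $2k\times n$ region by $k\times1$ bars with a straddling vertical bar forces rigid local structure. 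Concretely, horizontal bars in the straddled columns come in offset $k$-stacks, so the region between straddling events is governed by configurations with $k-1$ possible vertical offsets. This is where the factor $(k-1)a^kb^kx^k$ and the powers of $(1-b^{2k}x^k)$ enter.
\end{itemize}

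\textbf{Symmetric-function evaluation.} The Hadamard products have the form
\[
\sum_n h_n(\alpha)\,h_n(\beta)\,x^n ,
\]
where $\alpha,\beta$ are the reciprocal roots of $p$. By Cauchy this equals $\sum_\lambda s_\lambda(\alpha)s_\lambda(\beta)$ restricted appropriately, or more directly a specialisation of $h_n[h_2]$- or $e_n[e_2]$-type plethysms. Littlewood's identity
\[
\sum_n e_n[e_2]=\prod_{i<j}(1+x_ix_j)=\sum_{\lambda}s_\lambda,
\]
with $\lambda$ ranging over shapes of Frobenius type $(\alpha_1-1,\dots\mid\alpha_1,\dots)$, lets us evaluate these sums on the specialisation given by the roots of $p$, i.e.\ $e_1=a$, $e_k=(-1)^{k-1}b^k$, all other $e_i=0$. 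A Schur function evaluated at such a specialisation is computed by Jacobi–Trudi, equivalently by signed special rim-hook tableaux with rim hooks of lengths $1$ and $k$ only. We will therefore show that the finite polynomial $f_{k-1}(x;a,\pm b^k)$ is exactly this truncated Littlewood sum. The truncation is finite because at most $k$ nonzero variables occur, which bounds the number of rows by $N+1=k$ and gives degree at most $N(N+1)/2$. The exponents $a^{\mathrm{rem}(2s,N+1)}b^{\lfloor 2s/(N+1)\rfloor}$ follow from homogeneity: $|\lambda|=2s$ is covered by hooks of sizes $1$ and $k$.

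\textbf{Main obstacle.} The main obstacle is the coefficient computation: counting, with signs, the special rim-hook tableaux of the Littlewood shapes inside a $k$-row region. We would first classify which self-conjugate-shifted shapes with at most $N+1$ rows admit a special rim-hook tableau with given numbers of $k$-hooks. The parity of $\lfloor 2s/(N+1)\rfloor$ determines whether the last hook ends in the first or the second half of the rows, which yields the two cases $r_1>\lfloor N/2\rfloor$ and $r_0\le\lfloor N/2\rfloor$. We then count these tableaux via lattice paths to get the product of two binomials. Finally we assemble everything, checking that the $\pm b^k$ substitution correctly separates the horizontally-faulted part from the straddling part, and verify small cases (e.g.\ $k=2$ against $t(4,3;2)$).
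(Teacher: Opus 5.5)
You have the denominator side right: partial fractions for $\frac{1}{p}*\frac{1}{p}$ give the factor $p(\sqrt{x})p(-\sqrt{x})\prod_{i<j}(1-\gamma_i\gamma_jx)$, which you evaluate via Littlewood's threshold expansion of $e_s\circ e_2$ and special rim-hook tableaux under $e_1=a$, $e_k=(-1)^{k+1}b^k$. But two essential pieces are missing.

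\textbf{The numerator of the Hadamard square.} Nothing in your plan produces $f_{k-1}(x;a,b^k)$. Littlewood's identity only evaluates $\prod_{i<j}(1\mp\gamma_i\gamma_jx)$, and the numerator is not of that form. For $k=3$, $e_1(\gamma)=a$, $e_2(\gamma)=0$, $e_3(\gamma)=b^3$ give
$\prod_{i<j}(1-\gamma_i\gamma_jx)=1+ab^3x^2-b^6x^3=f_2(x;a,-b^3)$ and $\prod_{i<j}(1+\gamma_i\gamma_jx)=1+ab^3x^2+b^6x^3$.
The true numerator is $f_2(x;a,b^3)=1-ab^3x^2-b^6x^3$. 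So your claim that $f_{k-1}(x;a,\pm b^k)$ is a truncated Littlewood sum fails for the $+$ sign.

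The paper gets the numerator from the tiling side. Since $\frac{1}{p}*\frac{1}{p}=1/(1-V_k)$, with $V_k$ the series of vertically fault-free tilings having a central fault, the reduced numerator equals the reduced denominator of $V_k$. The paper then proceeds as follows.
\begin{enumerate}
\item It encodes these tilings by $\mathrm{TComp}(n-k,k)$ and sets up a transfer matrix.
\item It evaluates $\det(I-A(x))=D_{k-1}(b^kx,\dots,b^kx^{k-1})$ through a Desnanot--Jacobi recursion matched against the odd-composition polynomial $\varphi_{k-1}$.
\item It proves $\gcd(P_k,Q_k)=1$ by an adjugate/nullvector argument, so this determinant really is the reduced denominator.
\item It restores $a$ by homogeneity.
\end{enumerate}
Comparing with the reduced denominator of the Hadamard square, which needs distinctness of the $\gamma_i\gamma_j$ (Lemma~\ref{lem:distinct}), then pins down both numerator and denominator. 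Your remark that the sign in $\pm b^k$ ``separates the horizontally-faulted part from the straddling part'' misreads the roles. $f_{k-1}(x;a,b^k)$ and $f_{k-1}(x;a,-b^k)$ are the numerator and part of the denominator of the central-fault series alone; the straddling part enters only additively.

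\textbf{The straddling series.} You assert $U_k=(k-1)a^kb^kx^k/(1-b^{2k}x^k)^{k-1}$ rather than derive it, and the mechanism you describe is off. Graham's theorem (Theorem~\ref{thm:graham}) does not impose local structure. Its role is to show that no $2k\times n$ tiling is fault-free, since $2k=1\cdot k+k\cdot 1$ is the only admissible representation. Hence a vertically fault-free tiling without a central fault has a horizontal fault at distance $r<k$ from an edge. The remaining steps are:
\begin{enumerate}
\item Klarner's criterion, applied to the remaining $(2k-r)\times n$ piece, gives $k\mid n$.
\item The Aggarwal--Ram structure result (Proposition~\ref{prop:ARstructure}, together with their $\ell=1$ count) gives a single band of $k$ consecutive rows containing every vertical bar, at one global offset $s\in\{1,\dots,k-1\}$. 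This choice is the factor $k-1$.
\item The band corresponds to compositions of $k\ell$ with parts in $\{1,k\}$, exactly $k$ ones, and first and last part $1$. This gives $\binom{k+\ell-3}{\ell-1}$ tilings, each of weight $a^kb^{(2\ell-1)k}$.
\end{enumerate}
The exponent $k-1$ in $(1-b^{2k}x^k)^{k-1}$ counts the gaps between the $k$ vertical bars, not offsets varying between successive straddling events. Without these two inputs, $1-W_k=1/(\frac{1}{p}*\frac{1}{p})-U_k$ cannot be assembled into the stated formula.
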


We also give a combinatorial expression for $f_N(x;a,b)$ as a signed sum over odd compositions (see Equation~\eqref{eq:fnweighted}). The combinatorial representation is useful for structural and tableau-theoretic arguments, but for large $N$ it is considerably less efficient for explicit computation than the explicit coefficient form in Equation~\eqref{eq:fnxab}.

\begin{example}
  We have
  \begin{align*}
    F_2(x;a,b)&=\frac{1 - b^4 x^2}{1 - a^2 x - \bigl(3a^2 b^2 + 2b^4\bigr)x^2 - a^2 b^4 x^3 + b^8 x^4}\\
    &=1+ a^{2}x+ \bigl(a^{4} + 3a^{2}b^{2} + b^{4}\bigr)x^{2}+ \bigl(a^{6} + 6a^{4}b^{2} + 4a^{2}b^{4}\bigr)x^{3}\\
&\qquad+ \bigl(a^{8} + 9a^{6}b^{2} + 16a^{4}b^{4} + 9a^{2}b^{6} + b^{8}\bigr)x^{4}+\cdots. 
  \end{align*}
  The coefficient of $x^3$ in $F_2(x;a,b)$ corresponds to the tilings shown in Figure~\ref{fig:11tilings}.
\end{example}
\begin{example}
 Setting $k=3$ in Theorem \ref{thm:main}, we obtain $F_3(x;a,b)=P/Q$, where
  \begin{align*}
P&=  1
  - a b^3 x^2
  - 3 b^6 x^3
  + 2 a b^9 x^5
  + 3 b^{12} x^6
  - a b^{15} x^8
   - b^{18} x^9,
  \end{align*}
  and 
  \begin{align*}
  Q&=1
  - a^2 x
  - a b^3 x^2
  - \bigl(3a^3 b^3 + 4b^6\bigr)x^3
  + a^2 b^6 x^4
  + \bigl(2a^4 b^6 + 3a b^9\bigr)x^5
  + \bigl(4a^3 b^9 + 6b^{12}\bigr)x^6\\
  &\quad+ a^2 b^{12} x^7
  - 3a b^{15} x^8
  - \bigl(a^3 b^{15} + 4b^{18}\bigr)x^9
  - a^2 b^{18} x^{10}
  + a b^{21} x^{11}
  + b^{24} x^{12}.
  \end{align*}
Therefore, the generating function for straight tromino tilings of a $6\times n$ rectangle is
  \begin{align*}
F_3(x;1,1) &=  \frac{
  1
  - x^2
  - 3x^3
  + 2x^5
  + 3x^6
  - x^8
  - x^9
}{
  1
  - x
  - x^2
  - 7x^3
  + x^4
  + 5x^5
  + 10x^6
  + x^7
  - 3x^8
  - 5x^9
  - x^{10}
  + x^{11}
  + x^{12}
     }\\[6pt]
    &=1 +  x +  x^{2} + 6 x^{3} + 13 x^{4} + 22 x^{5} + 64 x^{6} + 155 x^{7} + 321 x^{8} + 783 x^{9}+\cdots,
  \end{align*}
  corresponding to the sequence \href{https://oeis.org/A236577}{A236577} in the Online Encyclopedia of Integer Sequences (OEIS).
\end{example}

Currently, the OEIS contains the generating functions $F_k(x;1,1)$ in Theorem \ref{thm:main} for $2\leq k\leq 10$: 
\href{http://oeis.org/A005178}{A005178}, \href{http://oeis.org/A236577}{A236577}, \href{http://oeis.org/A236582}{A236582}, \href{http://oeis.org/A247117}{A247117}, \href{http://oeis.org/A250663}{A250663}, \href{http://oeis.org/A250664}{A250664}, \href{http://oeis.org/A250665}{A250665}, \href{http://oeis.org/A250666}{A250666}, and \href{http://oeis.org/A250667}{A250667}. Most of these generating functions appear to have been obtained by direct brute-force enumeration using the transfer-matrix method. Theorem \ref{thm:main} implies that the numerator of $F_k(x;a,b)$ is, a priori, a polynomial of degree $3{k \choose 2}$ in $x$ while the denominator has degree $3{k \choose 2}+k$, a pentagonal number.
Using Theorem \ref{thm:main}, these generating functions can be computed for much larger values of $k$, as exemplified by the following calculation, which took under 7 seconds on a personal computer.
\begin{example}
  The number of tilings of a $62\times 3141$ rectangle using $31\times 1$ tiles is given by
  \begin{align*}
    &13402557618801701815685444925429379056914684414597971761745488045182\\
    & 60506181047183540953319585706522974237498150073347365953548828874598\\
    & 60849814074167537144921607298786734849307555723438800870146833283846\\
    & 5955126575180559822761044422243837857742218930.
  \end{align*}
\end{example}
Given formal power series $A(x)=\sum_{n\geq 0}a_nx^n$ and $B(x)=\sum_{n\geq 0}b_nx^n$, their Hadamard product is the series
\[
A(x)*B(x):=\sum_{n\geq 0}a_nb_nx^n.
\]
One of the key ingredients in the proof of Theorem \ref{thm:main} is the following explicit result on Hadamard products.
  \begin{theorem}\label{thm:hadamard}
If $N\geq 2$ is a positive integer and $p_N(x)=1-ax-bx^N$, then
$$
\frac{1}{p_N(x)}*\frac{1}{p_N(x)}=\frac{f_{N-1}(x;a,b)}{p_N(\sqrt{x})p_N(-\sqrt{x})f_{N-1}(x;a,-b)},
$$
where $f_N(x;a,b)$ is defined by \eqref{eq:fnxab}.   
  \end{theorem}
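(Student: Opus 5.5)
We start from the partial-fraction (or root) description of $1/p_N$. Over the algebraic closure of $\QQ(a,b)$, write $p_N(x)=\prod_{i=1}^N(1-\alpha_i x)$, where the $\alpha_i$ are the roots of $y^N-ay^{N-1}-b=0$. Then $1/p_N(x)=\sum_i A_i/(1-\alpha_i x)$ with $A_i=\alpha_i^{N-1}/\prod_{j\ne i}(\alpha_i-\alpha_j)$, and consequently
\[
\frac{1}{p_N(x)}*\frac{1}{p_N(x)}=\sum_{i,j}\frac{A_iA_j}{1-\alpha_i\alpha_j x}.
\]
This is a rational function whose denominator divides $\prod_{i,j}(1-\alpha_i\alpha_jx)$. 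We split this product into its diagonal part and its off-diagonal part:
\[
\prod_i(1-\alpha_i^2x)=p_N(\sqrt x)\,p_N(-\sqrt x),\qquad \prod_{i<j}(1-\alpha_i\alpha_jx)^2 .
\]
For the off-diagonal part, note that $\prod_{i<j}(1-\alpha_i\alpha_j x)$ is the polynomial whose reciprocal roots are the products $\alpha_i\alpha_j$. Its coefficients are symmetric functions in the $\alpha$'s, and these can be evaluated via plethysm $e_r[e_2]$ at the alphabet $\{\alpha_i\}$. The first step is therefore to show that the true denominator is $p_N(\sqrt x)p_N(-\sqrt x)\,g(x)$ with $g(x)=\prod_{i<j}(1-\alpha_i\alpha_jx)$, i.e.\ that only the first power of the off-diagonal factor survives. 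The reason is that the pairs $(i,j)$ and $(j,i)$ give identical terms, so the sum is $\sum_i A_i^2/(1-\alpha_i^2x)+2\sum_{i<j}A_iA_j/(1-\alpha_i\alpha_jx)$.

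The second step identifies $g(x)=f_{N-1}(x;a,-b)$ up to normalization. Each elementary symmetric function $e_r$ of the $\alpha$'s is a signed monomial: $e_1=a$, $e_N=(-1)^{N-1}b$, and all others vanish. Hence the coefficient of $x^s$ in $g$ equals $(-1)^s e_s[e_2]$ evaluated at this specialization. Littlewood's identity gives the expansion
\[
\sum_s e_s[e_2]=\sum_{\lambda}s_\lambda,
\]
where $\lambda$ runs over partitions whose Frobenius form is $(\alpha_1,\dots\mid\alpha_1+1,\dots)$. Each such $s_\lambda$ is then evaluated at the specialization using the Jacobi--Trudi dual form $s_\lambda=\det(e_{\lambda'_i-i+j})$. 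Because only $e_0,e_1,e_N$ are nonzero, this determinant can be computed by a rim-hook tableau argument: $\lambda$ must be tiled by dominoes weighted $a$ and horizontal strips of length $N$ weighted $\pm b$. This is where the special rim-hook tableaux enter. The resulting count yields the monomial $a^{\mathrm{rem}(2s,N)}b^{\lfloor 2s/N\rfloor}$ (here $N$ plays the role of $N+1$ in \eqref{eq:fnxab}) together with the signed binomial coefficients $c_s(N-1)$. The key observation is that, for fixed $s$, all admissible $\lambda$ of size $2s$ contribute the same $a$/$b$ degrees, since $\deg_a+N\deg_b=2s$ with $\deg_a<N$.

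The third step handles the numerator. Once the denominator is known, the numerator equals the Hadamard product times the denominator, and we must show it is $f_{N-1}(x;a,b)$. We would argue by symmetry: replacing $b\mapsto -b$ corresponds to $\alpha_i\mapsto\zeta\alpha_i$ up to a twist by a root of unity. More concretely, one can use the classical fact that for $1/\prod(1-\alpha_ix)$ the Hadamard square equals $\sum_\lambda s_\lambda(\alpha)^2x^{|\lambda|}$ (Cauchy), and then apply the identity
\[
\sum_\lambda s_\lambda(\alpha)s_\lambda(\alpha)\,x^{|\lambda|}\cdot\prod_{i\le j}(1-\alpha_i\alpha_jx)=\sum_\mu(\pm) s_\mu(\alpha)x^{|\mu|/2},
\]
a Littlewood-type identity with $\mu$ having even columns/rows. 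This identifies the numerator as the companion sum with the sign pattern flipped, i.e.\ $f_{N-1}(x;a,b)$. We expect this numerator identification, together with the exact evaluation of the specialized Schur functions giving the closed-form $c_s$, to be the main obstacle. We would first verify the case split (parity of $\lfloor 2s/N\rfloor$) by checking which Frobenius shapes admit a tiling of the required type, and then count those tilings with a lattice-path or stars-and-bars argument that produces the product of two binomials.
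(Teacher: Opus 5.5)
\textbf{The genuine gap is your third step, the numerator.} The ``classical fact'' it rests on is false. The Hadamard square of $\sum_n h_n(\alpha)x^n$ is $\sum_n h_n(\alpha)^2x^n$. The Cauchy identity instead gives $\sum_\lambda s_\lambda(\alpha)^2x^{|\lambda|}=\prod_{i,j}(1-\alpha_i\alpha_jx)^{-1}$, whose reduced denominator has degree $N^2$. By your own first step, the reduced denominator of the Hadamard square divides $\prod_{i\le j}(1-\alpha_i\alpha_jx)$, so it has degree at most $\binom{N+1}{2}<N^2$; the two series cannot be equal.

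Consequently your displayed ``Littlewood-type identity'' only computes $\prod_{i<j}(1-\alpha_i\alpha_jx)^{-1}=\sum_{\mu}s_\mu(\alpha)x^{|\mu|/2}$ (sum over $\mu$ with even columns). That is an infinite series, not a polynomial numerator, and nothing in it flips signs to produce $f_{N-1}(x;a,b)$. The proposed $b\mapsto -b$ symmetry is also unavailable. The polynomial $\zeta^Ny^N-a\zeta^{N-1}y^{N-1}-b$ is a scalar multiple of $y^N-ay^{N-1}+b$ only if $\zeta^N=\zeta^{N-1}=-1$, which is impossible. The paper itself calls this coincidence unexpected and asks for a conceptual explanation of it. So as written you have no argument that $H(x)\,p_N(\sqrt x)p_N(-\sqrt x)f_{N-1}(x;a,-b)=f_{N-1}(x;a,b)$.

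\textbf{How the paper gets the numerator.} It works from the tiling side rather than from symmetric functions. By Proposition~\ref{prop:Hkvkandhadamard}, $\frac{1}{p_N(x;a,c^N)}*\frac{1}{p_N(x;a,c^N)}=1/(1-V_N(x;a,c))$, where $V_N$ enumerates vertically fault-free $2N\times n$ tilings with a central horizontal fault. A transfer matrix gives the denominator $\det(I-A(x))$ of $V_N$. The Jacobi--Desnanot recursion identifies this determinant as $\varphi_{N-1}(c^Nx,\dots,c^Nx^{N-1})=f_{N-1}(x;1,c^N)$. The coprimality of $P_k$ and $Q_k$ shows this denominator is already reduced. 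If $V=R/S$ is in lowest terms, then $S/(S-R)$ is also in lowest terms, so the reduced numerator of $H$ is $f_{N-1}(x;a,b)$ after the rescaling in Lemma~\ref{lem:finalAden}. Comparing with the reduced denominator finishes the proof.

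\textbf{Smaller gaps in your first two steps.} The final comparison of reduced forms is where the distinctness of the products $\alpha_i\alpha_j$ ($i\le j$) from Lemma~\ref{lem:distinct} is needed; your first step only yields divisibility, not equality of denominators. Your second step matches the paper's denominator computation in spirit, but two points need correcting. First, under the specialization $e_1=a$, $e_N=(-1)^{N-1}b$, the surviving special rim-hook tableaux are built from single cells and $N$-ribbons meeting the first column of the ASC shape $\lambda'$, not dominoes and horizontal strips. Second, what you call a key observation has to be proved: that the number of $N$-ribbons equals the rank and equals $\lfloor 2s/N\rfloor$ (so $\deg_a<N$), that each admissible shape carries exactly one such tableau, and what its sign is. This is the content of Lemma~\ref{lem:uniqmu}, Theorem~\ref{thm:frobconditions}, Corollary~\ref{cor:card1}, Proposition~\ref{prop:bijection} and Corollary~\ref{cor:length}.
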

One of the main contributions of this paper is a derivation of Theorem \ref{thm:hadamard} using enumerative results on special rim-hook tableaux (see Section \ref{subsec:srht}). These tableaux play an integral role in the representation theory of the symmetric group.
We rely on several external ingredients: Littlewood's identity on Schur expansions of plethystic compositions of elementary symmetric functions (Theorem~\ref{thm:threshold}).
On the tiling side, we use Graham's characterization of fault-free rectangle tilings (Theorem~\ref{thm:graham}).
We also invoke structural results of Aggarwal and Ram in the regime $m<2k$ (Proposition~\ref{prop:ARstructure}).

The paper is organized as follows.
Section~\ref{sec:central-horizontal-fault} decomposes $2k\times n$ tilings by vertical and central horizontal faults and derives the corresponding generating-function relations, including a Hadamard-product formulation.
Section~\ref{sec:refinement} derives explicit formulas for the polynomial $f_N(x;a,b)$ above.
Section~\ref{sec:hadamard-tilings} proves the Hadamard-product identity of Theorem~\ref{thm:hadamard} via special rim-hook tableaux and Littlewood's plethystic expansion identity.
Section~\ref{sec:final-hadamard-proof} combines this identity with Graham's fault-free criterion and Klarner's divisibility criterion to prove Theorem~\ref{thm:main}.

\section{Tilings with a central horizontal fault}\label{sec:central-horizontal-fault}
By convention, we place an $m\times n$ rectangle in the coordinate plane with its bottom-left corner at the origin.
For an integer $a$ with $0<a<n$, we say a tiling has a \emph{vertical fault at $x=a$} if the interior of no tile intersects the line $x=a$.
For an integer $b$ with $0<b<m$, we say a tiling has a \emph{horizontal fault at $y=b$} if the interior of no tile intersects the line $y=b$.
A tiling is \emph{vertically fault-free} if it has no vertical fault. In this section we isolate tilings of a $2k\times n$ rectangle that have a central horizontal fault and encode them by compositions. This yields a Hadamard-product formulation for the associated generating function.

Let $\mathcal{C}(2k,n;k)$ denote the collection of all tilings in $\mathcal{T}(2k,n;k)$ which have a central horizontal fault. An example of such a tiling is shown in Figure \ref{fig:vff}. Tilings $T\in \mathcal{T}(k,n;k)$ are in bijection with compositions of $n$ which have all parts in $\{1,k\}$. Therefore, each tiling $T\in \mathcal{C}(2k,n;k)$ corresponds to a pair of such compositions $u(T)$ and $d(T)$. In the running example of Figure \ref{fig:vff}, these compositions are $u(T)=(4,4,1,4,1,1,4)$ and $d(T)=(1,4,1,1,4,1,4,1,1,1)$. We refer to a contiguous collection of $k$ horizontal tiles placed one above the other as a \emph{block}. For a tiling $T\in \mathcal{C}(2k,n;k)$, the blocks correspond to the parts equal to $k$ in the compositions $u(T)$ and $d(T)$. If we assign a weight of $b$ to each horizontal tile, then the tiling in Figure \ref{fig:vff} has weight $b^{28}$.
\begin{figure}[!ht]
  \centering
\begin{tikzpicture}[scale=0.5] 
  \tikzset{tile/.style={fill=gray!30,draw=black}}
  \newcommand{\hTile}[2]{\fill[tile] (#1,#2) rectangle ++(4,1);}
  \newcommand{\vTile}[2]{\fill[tile] (#1,#2) rectangle ++(1,4);}
  \foreach \y in {4,5,6,7}{ \hTile{0}{\y} }
  \foreach \y in {4,5,6,7}{ \hTile{4}{\y} }
  \vTile{8}{4}
  \foreach \y in {4,5,6,7}{ \hTile{9}{\y} }
  \vTile{13}{4} \vTile{14}{4}
  \foreach \y in {4,5,6,7}{ \hTile{15}{\y} }

  \vTile{0}{0}
  \foreach \y in {0,1,2,3}{ \hTile{1}{\y} }
  \vTile{5}{0} \vTile{6}{0}
  \foreach \y in {0,1,2,3}{ \hTile{7}{\y} }
  \vTile{11}{0}
  \foreach \y in {0,1,2,3}{ \hTile{12}{\y} }
  \vTile{16}{0} \vTile{17}{0} \vTile{18}{0}

  \begin{scope}[shift={(0,-1)}]
  \draw[->, line width=0.8pt, >={Triangle[length=5pt,width=5pt]}]
    (0,0)  -- (1,0)  node[midway,below] {1};
  \draw[->, line width=0.8pt, >={Triangle[length=5pt,width=5pt]}]
    (1,0)  -- (4,0)  node[midway,below] {3};
  \draw[->, line width=0.8pt, >={Triangle[length=5pt,width=5pt]}]
    (4,0)  -- (7,0)  node[midway,below] {3};
  \draw[->, line width=0.8pt, >={Triangle[length=5pt,width=5pt]}]
    (7,0)  -- (9,0)  node[midway,below] {2};
  \draw[->, line width=0.8pt, >={Triangle[length=5pt,width=5pt]}]
    (9,0)  -- (12,0) node[midway,below] {3};
  \draw[->, line width=0.8pt, >={Triangle[length=5pt,width=5pt]}]
    (12,0) -- (15,0) node[midway,below] {3};
\end{scope}
\end{tikzpicture}
   \caption{A tiling $T\in \mathcal{C}(8,19;4)$ that is vertically fault-free; the arrows indicate the successive offsets of the blocks.} 
  \label{fig:vff} 
\end{figure}

Define
\begin{align*}
  h_n=h_{k,n}(a,b):=\sum_{T}w(T),
\end{align*}
where the sum is taken over all tilings $T\in \mathcal{T}(2k,n;k)$ which have a horizontal fault at $y=k$. Similarly define
\begin{align*}
  v_n=v_{k,n}(a,b):=\sum_{T}w(T),
\end{align*}
where the sum is taken over all vertically fault-free tilings $T\in \mathcal{T}(2k,n;k)$ which have a horizontal fault at $y=k$. Note that $h_{k,n}(a,b)$ and $v_{k,n}(a,b)$ are homogeneous polynomials in $a$ and $b$ of degree $2n$. Define
\begin{equation}
  \label{eq:hkvk}
  H_k(x)=\sum_{n\geq 0}h_{k,n}(a,b)x^n \mbox{ and } V_k(x)=V_k(x;a,b)=\sum_{n\geq 1}v_{k,n}(a,b)x^n.
\end{equation}

\begin{proposition}\label{prop:Hkvkandhadamard}  
If $p_k(x;a,b)=1-ax-bx^k$, then 
\begin{equation*}
H_k(x)=\frac{1}{1-V_k(x)}=\frac{1}{p_k(x;a,b^k)}*\frac{1}{p_k(x;a,b^k)}, 
\end{equation*}
\end{proposition}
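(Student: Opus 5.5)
The plan is to prove the two equalities separately, since each reduces to a standard decomposition.

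\emph{The Hadamard product.} A tiling of a $2k\times n$ rectangle with a horizontal fault at $y=k$ is exactly an ordered pair of tilings of two $k\times n$ rectangles, the upper one and the lower one. Write $g_n$ for the weighted count of tilings in $\mathcal{T}(k,n;k)$. Such a tiling corresponds to a composition of $n$ with parts in $\{1,k\}$. A part $1$ is a single vertical tile and carries weight $a$. A part $k$ is a block of $k$ horizontal tiles and carries weight $b^k$. The standard transfer argument on the first part gives
\[
\sum_{n\ge 0} g_n x^n=\frac{1}{1-ax-b^kx^k}=\frac{1}{p_k(x;a,b^k)}.
\]
Weights are multiplicative over the two halves, so $h_{k,n}=g_n^2$. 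This is precisely the coefficientwise product, which yields $H_k(x)=\frac{1}{p_k(x;a,b^k)}*\frac{1}{p_k(x;a,b^k)}$.

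\emph{The fault decomposition.} Take a tiling counted by $h_{k,n}$ with $n\ge1$, and cut it along all of its vertical faults. This splits it uniquely into an ordered sequence of vertically fault-free pieces of widths $n_1+\dots+n_r=n$, with each $n_i\ge1$.

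Each piece is a tiling of a $2k\times n_i$ rectangle, and it still has the central horizontal fault, because the fault line $y=k$ passes through every piece. Conversely, concatenating any sequence of vertically fault-free tilings that each have the central horizontal fault produces a tiling with the central horizontal fault. In the concatenated tiling, the vertical faults are exactly the junctions, since each piece has no interior vertical fault.

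The weight is multiplicative across pieces. Therefore
\[
H_k=\sum_{r\ge0}V_k^r=\frac{1}{1-V_k}.
\]
Here $V_k$ has zero constant term, so the geometric series is a valid formal expansion.

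\emph{Main obstacle.} The only point needing care is the bijectivity of the fault decomposition. Two facts must be checked. First, no tile crosses a fault line, so cutting along faults yields genuine tilings of the sub-rectangles. Second, the cut is made at \emph{all} faults, which guarantees that the pieces are fault-free and makes the decomposition unique. Beyond that, the argument is routine.
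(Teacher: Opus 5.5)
Your proposal is correct and follows the paper's proof. The first equality comes from the unique decomposition of a centrally faulted tiling into a sequence of vertically fault-free pieces. The second comes from the bijection between $k$-bar tilings of a $k\times n$ strip and compositions of $n$ with parts in $\{1,k\}$, weighted $a$ and $b^k$, applied to each half and multiplied coefficientwise. You supply the details (uniqueness of the cut, multiplicativity of weights, zero constant term of $V_k$) that the paper leaves implicit.
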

\begin{proof}
  Since each tiling corresponds to a unique sequence of vertically fault-free tilings, one obtains the first equality. The second equality follows since $k$-bar tilings of a $k\times n$ rectangle are in bijection with compositions of $n$ with parts in $\{1,k\}$. 
\end{proof}
\begin{remark}\label{rem:vkhomogeneity}
 By the homogeneity of $v_{k,n}(a,b)$, the power series $V_k(x;a,b)$ can be recovered from its value at $a=1$. More precisely, $V_k(x;a,b)=V_k(a^2x;1,ba^{-1})$, so it suffices to compute the generating function $V_k(x;1,b)$.  
\end{remark}

Within $\mathcal{C}(2k,n;k)$, let $VC(2k,n;k)$ denote the subset of vertically fault-free tilings. Note that the positions of the blocks completely determine whether such a tiling is vertically fault-free. 
\begin{definition}
  Let ${\rm TComp}(N,k)$ denote the set of all compositions $\beta$ of $N$ with part sizes at most $k-1$ such that each pair of consecutive parts of $\beta$ has sum at least $k$.
\end{definition}
\begin{example}
We have ${\rm TComp}(6,3)=\{(2,2,2),(1,2,1,2),(1,2,2,1),(2,1,2,1)\}.$
\end{example}
\begin{proposition}
  For $n>k\geq 2$, we have $ |VC(2k,n;k)|=2\cdot |{\rm TComp}(n-k,k)|.$
\end{proposition}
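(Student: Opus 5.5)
The approach is to encode a tiling in $VC(2k,n;k)$ by the starting positions of all its blocks, taken from both halves together, and to show that vertical fault-freeness becomes a covering condition on these positions. Let $T\in\mathcal{C}(2k,n;k)$. Each half of $T$ is a $k\times n$ tiling consisting of unit-width columns (a single vertical $k$-bar) and blocks of width $k$. The vertical edges of a half are exactly the partial sums of $u(T)$ (respectively $d(T)$). So for an integer $0<x<n$, $T$ has a vertical fault at $x$ iff $x$ is a partial sum of both $u(T)$ and $d(T)$. Equivalently, $T$ has a vertical fault at $x$ iff $x$ lies in the interior of no block of either half. Non-integer lines can never be faults, since every tile has integer coordinates. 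Hence $T$ is vertically fault-free iff the open intervals $(s,s+k)$, with $s$ ranging over the block starting positions of both halves, cover $\{1,\dots,n-1\}$.

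Next I would extract the structure of the block positions. Let $s_1\le s_2\le\dots\le s_r$ be the block starts of both halves in sorted order.
\begin{itemize}
\item Coverage of $x=1$ forces $s_1=0$, and coverage of $x=n-1$ forces $s_r=n-k$.
\item Two blocks in the same half are disjoint, so their starts differ by at least $k$.
\item If two blocks in different halves had the same start $s$, the point $s+k$ would be a partial sum in both halves. Since $n>k$, this $s+k$ would be a fault unless $s+k=n$; but then $s=n-k>0$ would itself be a fault. Hence all starts are distinct.
\item For coverage of the integer $s_i+k<n$, some block must start in $(s_i,s_i+k)$. This gives $s_{i+1}-s_i\in[1,k-1]$.
\end{itemize}
Combining the second and fourth points, consecutive blocks in the sorted order lie in different halves, so the blocks alternate between the top and bottom halves. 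The same-half disjointness then says precisely that $(s_{i+2}-s_i)\ge k$. Consequently the offset sequence $\beta=(s_2-s_1,\dots,s_r-s_{r-1})$ is a composition of $n-k$ lying in ${\rm TComp}(n-k,k)$. Together with the choice of which half contains the block at $0$, this gives a map $VC(2k,n;k)\to\{\text{top},\text{bottom}\}\times{\rm TComp}(n-k,k)$.

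For the converse, given $\beta\in{\rm TComp}(n-k,k)$ and a choice of starting half, I would place blocks at the partial sums $0,\beta_1,\beta_1+\beta_2,\dots,n-k$, alternating halves, and fill every remaining cell with vertical bars. The condition $\beta_i+\beta_{i+1}\ge k$ ensures that blocks in the same half do not overlap. The condition $\beta_i\le k-1$, together with the first block starting at $0$ and the last ending at $n$, ensures that the open intervals cover every integer in $(0,n)$, so the tiling is fault-free. The two constructions are mutually inverse, which gives $|VC(2k,n;k)|=2\,|{\rm TComp}(n-k,k)|$.

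The main obstacle I expect is the rigorous justification of alternation and distinctness of starts, i.e.\ ruling out configurations where a block in one half is ``jumped over'' in the covering. The clean way is the observation that two consecutive sorted starts in the same half differ by at least $k$, so the integer at distance $k$ from the first start would be uncovered. The hypothesis $n>k$ is used exactly to exclude the degenerate tiling consisting of two stacked blocks, which occurs only when $n=k$.
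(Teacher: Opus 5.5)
Your proof is correct and follows essentially the same route as the paper: you encode the tiling by the offsets between successive blocks, show that the blocks alternate between the two halves so that the offsets form an element of ${\rm TComp}(n-k,k)$, and obtain the factor $2$ from which half contains the block at the left edge (the paper phrases this last step as a reflection). Your covering reformulation makes explicit several points the paper leaves to ``scanning from left to right'': that the first block starts at $0$, that no two blocks share a starting position (so exactly one of $u(T),d(T)$ begins with $k$), and where the hypothesis $n>k$ is used.
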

\begin{proof}
  For $n>k$, let $T\in VC(2k,n;k)$ be a tiling such that the first part of $u(T)$ is $k$. Scanning $T$ from left to right, we see that the vertically fault-free condition implies that the blocks alternate in the upper and lower halves of $T$. Consider the composition $\beta(T)=(\beta_1,\ldots,\beta_\ell)$ that specifies the relative offsets of the blocks as one scans $T$ from left to right. In the example of Figure \ref{fig:vff}, we have $\beta(T)=(1,3,3,2,3,3)$. For $T\in VC(2k,n;k)$, the composition $\beta(T)$ has size $n-k$ and each part of $\beta(T)$ is less than $k$. Further, we must have $\beta_i+\beta_{i+1}\geq k$ for $1\leq i\leq \ell(\beta)-1$ since successive blocks on the same side of the central fault are offset by at least $k$. Conversely, it is clear that any composition $\beta\in {\rm TComp}(n-k,k)$ corresponds to a vertically fault-free tiling $T\in VC(2k,n;k)$ such that $u(T)$ has first part $k$.

  If $T\in VC(2k,n;k)$ and  $u(T)$ has first part 1, then reflecting $T$ about a horizontal axis yields a tiling $T'\in VC(2k,n;k)$ such that the first part of $u(T')$ is $k$. This accounts for the factor of 2 in the statement of the proposition.
\end{proof}
Since each tiling $T\in VC(2k,n;k)$ has one block more than the number of parts of $\beta(T)$, we obtain the following result.
\begin{corollary}\label{cor:vnrec}
  For $n>k\geq 2,$
  \begin{align*}
    v_{k,n}(1,b)=2 b^k\sum_{\beta\in {\rm TComp}(n-k,k)}b^{k\cdot \ell(\beta)}.
  \end{align*}
\end{corollary}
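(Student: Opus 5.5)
The corollary follows by attaching weights to the bijection in the preceding proposition, so the plan is to check how the weight of a tiling is read off from its offset composition. By Remark~\ref{rem:vkhomogeneity} it suffices to work at $a=1$. Then the weight of a tiling $T\in VC(2k,n;k)$ is $b^h$, where $h$ is the number of horizontal tiles. Horizontal tiles occur only inside blocks, and each block has exactly $k$ of them, so $w(T)=b^{k\cdot(\#\text{blocks})}$.

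Next I restrict to tilings whose upper composition $u(T)$ begins with a part equal to $k$. The proof of the preceding proposition gives a bijection $T\mapsto\beta(T)$ onto ${\rm TComp}(n-k,k)$. I need to show that the number of blocks of $T$ is $\ell(\beta(T))+1$. Since $\beta(T)$ records the offsets between consecutive blocks, $\ell$ offsets come from $\ell+1$ blocks. This needs one check: every block lies in the chain of alternating blocks. The chain starts at $x=0$, because $u(T)$ begins with $k$. Vertical fault-freeness forces the blocks to alternate between the halves with each offset less than $k$, and the last block must end at $x=n$. Otherwise the trailing vertical tiles in both halves would create a vertical fault. The sizes then sum to $n=k+|\beta|$, consistent with $|\beta|=n-k$. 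So such $T$ has weight $b^{k}\,b^{k\ell(\beta)}$.

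Finally, the remaining tilings are those where $u(T)$ begins with $1$. Reflecting about the central horizontal line $y=k$ gives a weight-preserving bijection onto the first class, since reflection keeps the number of horizontal tiles. The two classes are disjoint, which yields the factor $2$. The hypothesis $n>k$ ensures that $\beta$ is nonempty and excludes the degenerate single-block case $n=k$. In that case both halves could be blocks, or one half blocks and the other verticals, and the count would differ.

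The only delicate step is the block count. One must confirm that no blocks sit outside the alternating chain, using fault-freeness at both ends, and this was implicit in the earlier proposition. Everything else is immediate bookkeeping.
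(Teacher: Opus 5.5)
Your proposal is correct and follows the paper's argument. At $a=1$ each tiling has weight $b^{k\cdot\#\text{blocks}}$, the bijection $T\mapsto\beta(T)$ from the preceding proposition satisfies $\#\text{blocks}=\ell(\beta(T))+1$, and reflection in the central fault gives the factor $2$. Your extra checks (the chain starts at $x=0$ and must end at $x=n$) only make explicit what the paper leaves implicit in that proposition, and the appeal to Remark~\ref{rem:vkhomogeneity} is unnecessary because the statement is already at $a=1$.
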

Define
\begin{align*}
  c(n)=\sum_{\beta\in {\rm TComp}(n,k)}{b}^{k\cdot \ell(\beta)},
\end{align*}
and let $C(x)=\sum_{n\geq 0}c(n)x^n$.
\begin{proposition}\label{prop:VfromC} 
  We have
  \begin{align*}
    V_k(x;1,b)=x+b^{2k}x^k+2b^kx^kC(x).
  \end{align*}
\end{proposition}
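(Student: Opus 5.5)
The plan is to compute $V_k(x;1,b)=\sum_{n\ge1} v_{k,n}(1,b)x^n$ by sorting the coefficients into the ranges $1\le n<k$, $n=k$, and $n>k$. The last range is handled by Corollary~\ref{cor:vnrec}; the first two are small boundary cases that we check directly.

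\emph{Range $1\le n<k$.} No horizontal tile fits, since it would need width $k>n$. So every tiling of a $2k\times n$ rectangle with a horizontal fault at $y=k$ consists of $2n$ vertical tiles, two in each column. Such a tiling has a vertical fault at $x=1,\dots,n-1$. It is vertically fault-free exactly when $n=1$, with weight $a^2$, which becomes $1$ at $a=1$. This contributes the term $x$, and the coefficients for $2\le n\le k-1$ vanish.

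\emph{Range $n=k$.} Each half ($k\times k$) is tiled either by $k$ vertical tiles or by a single block of $k$ horizontal tiles. Vertical fault-freeness forbids any half consisting of vertical tiles when $k\ge2$, because two adjacent vertical tiles in one half create a vertical fault unless the other half blocks it. If both halves are vertical, there are faults everywhere. If exactly one half is vertical, its columns produce faults at $x=1,\dots,k-1$ that are not blocked, since the other half's block spans the full width $[0,k]$ and therefore does not cross any interior line $x=j$ with $0<j<k$. In fact, when $n=k$ a block spans the whole width and never blocks anything, so a vertical tiling in either half always leaves faults. The only valid tiling is therefore two blocks, one in each half, with weight $b^{2k}$. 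This gives the term $b^{2k}x^k$. I would also check that the formula for $n>k$ does not spill into this coefficient: $2b^kx^kC(x)$ contributes $2b^kc(0)x^k$ at $x^k$. Here $c(0)$ counts the empty composition, which lies in ${\rm TComp}(0,k)$ vacuously and has weight $b^0=1$. That would add an extra $2b^k$ to the $x^k$ coefficient, which does not match. So the main obstacle is interpreting the convention: either $C(x)$ is meant to start effectively at $n\ge1$, or $c(0)$ is absorbed in a different way. I would resolve this by noting that ${\rm TComp}(N,k)$ is intended for $N\ge1$ (take $c(0)=0$), or equivalently by reading the statement so that the $x^k$ coefficient is exactly $b^{2k}$. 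I would state explicitly which convention is in force and adjust the bookkeeping accordingly.

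\emph{Range $n>k$.} Corollary~\ref{cor:vnrec} gives $v_{k,n}(1,b)=2b^k c(n-k)$ with $n-k\ge1$. Summing over $n>k$ yields $2b^kx^k\sum_{m\ge1}c(m)x^m$. This equals $2b^kx^kC(x)$ under the convention just fixed; note also that $c(m)=0$ for $1\le m$ only when no valid composition exists. Adding the three ranges gives the stated identity. The only genuine difficulty is the boundary case $n=k$ together with the treatment of $c(0)$. The rest follows immediately from Corollary~\ref{cor:vnrec}.
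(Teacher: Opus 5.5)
Your split into the ranges $1\le n<k$, $n=k$ and $n>k$ matches the paper's proof, and the first and last ranges are handled correctly. The error is in the case $n=k$. You claim that a block of $k$ horizontal tiles spanning $[0,k]$ does not cross the lines $x=j$ with $0<j<k$. But each such tile $[0,k]\times[i,i+1]$ has interior $(0,k)\times(i,i+1)$, which meets every one of these lines.

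So if one half is tiled by $k$ vertical tiles and the other half is a block, the block blocks all the potential faults $x=1,\dots,k-1$, and the tiling is vertically fault-free. Three tilings therefore contribute to $v_{k,k}$, not one:
\begin{itemize}
\item two blocks, of weight $b^{2k}$;
\item the two tilings with one vertical half and one block half, each of weight $a^kb^k$.
\end{itemize}
Hence $v_{k,k}(1,b)=b^{2k}+2b^k$. For $k=2$ these are the three tilings of the $4\times 2$ rectangle with a central fault, excluding the all-vertical one; this agrees with the $x^2$-coefficient $2a^2b^2+b^4$ of $V_2$ obtained from the expansion of $F_2$. The two single-block tilings are exactly the case of the empty offset composition $\beta=\emptyset$ in the argument for $|VC(2k,n;k)|=2\,|{\rm TComp}(n-k,k)|$.

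The discrepancy you found is therefore an artifact of the miscount, and your proposed fix goes the wrong way. With $c(0)=0$, the right-hand side would have $x^k$-coefficient $b^{2k}$, so the identity would be false. The correct convention, and the paper's, is $c(0)=1$, coming from the empty composition in ${\rm TComp}(0,k)$. Then $2b^kx^kC(x)$ contributes exactly the missing $2b^kx^k$, the $x^k$-coefficient $b^{2k}+2b^kc(0)$ matches, and $\sum_{n>k}c(n-k)x^{n-k}=C(x)-1$. With the $n=k$ count corrected and this convention kept, your argument becomes the paper's proof.
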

\begin{proof}
     Since $v_{k,n}(a,b)=0$ for $1<n<k$ and precisely 3 tilings contribute to $v_{k,k}(a,b)$, we obtain $V_k(x;a,b)=a^2x+(2a^kb^k+b^{2k})x^k+\cdots.$ Together with Corollary \ref{cor:vnrec} we obtain
  \begin{align*}
    V_k(x;1,b)=x+(2b^k+b^{2k})x^k+2b^kx^k\sum_{n>k}c(n-k)x^{n-k},
  \end{align*}
  and the result follows since the sum above equals $C(x)-1$.
\end{proof}
\begin{proposition}\label{prop:vk}
  For each positive integer $k\geq 2$, there exists a polynomial $\widetilde{P}_k(x)$ such that
  \begin{align*}
C(x)=\frac{\widetilde{P}_k(x)}{\det(I-A(x))},
  \end{align*}
  where $A(x)=(a_{ij})$ is the $(k-1)\times (k-1)$ matrix with entries given by
  \begin{equation}\label{eq:aij}
  a_{ij}=
  \begin{cases}
    b^k x^j & \mbox{if }i+j\geq k,\\ 
    0 & \mbox{otherwise.}
  \end{cases}
\end{equation}
\end{proposition}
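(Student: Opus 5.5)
This is a transfer-matrix argument in which the state is the size of the last part of the composition. For $n\geq 1$ and $1\leq j\leq k-1$, let $c_j(n)=\sum b^{k\ell(\beta)}$, where the sum runs over the compositions $\beta\in{\rm TComp}(n,k)$ whose last part equals $j$. Set $C_j(x)=\sum_{n\geq 1}c_j(n)x^n$, so that $C(x)=1+\sum_{j=1}^{k-1}C_j(x)$; the constant term $1$ comes from the empty composition.

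Removing the last part $j$ of such a composition leaves either the empty composition or a composition in ${\rm TComp}(n-j,k)$ whose last part $i$ satisfies $i+j\geq k$. Conversely, appending a part $j\leq k-1$ to such a composition yields an element of ${\rm TComp}(n,k)$, since the only new constraint is the one between the last two parts. Each part contributes a factor $b^k$ to the weight and $x^j$ to the size. This gives the linear system
\[
C_j(x)=b^kx^j+\sum_{i:\,i+j\geq k} C_i(x)\,b^kx^j,\qquad 1\leq j\leq k-1 .
\]
Writing $\mathbf{C}=(C_1,\dots,C_{k-1})$ as a row vector and $\mathbf{u}=(b^kx,b^kx^2,\dots,b^kx^{k-1})$, the system reads $\mathbf{C}=\mathbf{u}+\mathbf{C}A(x)$. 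Here $A(x)$ is exactly the matrix in \eqref{eq:aij}: its $(i,j)$ entry is $b^kx^j$ when $i+j\geq k$ and $0$ otherwise, so the row index is the previous last part and the column index is the new part.

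To solve the system, note first that every entry of $A(x)$ is divisible by $x$. Hence $\det(I-A(x))$ has constant term $1$ and is invertible as a formal power series, and $I-A(x)$ is invertible over $\QQ[b][[x]]$. This gives $\mathbf{C}=\mathbf{u}(I-A(x))^{-1}$; the formal series solution is unique because the recursion determines the coefficients inductively in $n$. Cramer's rule (the adjugate formula) then gives $(I-A)^{-1}=\operatorname{adj}(I-A)/\det(I-A)$, where $\operatorname{adj}(I-A)$ has polynomial entries. Therefore
\[
C(x)=\frac{\det(I-A(x))+\mathbf{u}\,\operatorname{adj}(I-A(x))\,\mathbf{1}}{\det(I-A(x))},
\]
and the numerator is the required polynomial $\widetilde{P}_k(x)$.

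The only point that needs care is setting up the recursion correctly. The empty prefix accounts for the term $\mathbf{u}$, and one must check that the adjacency condition involves only consecutive parts, so that keeping track of the last part alone is sufficient. Everything else is routine linear algebra.
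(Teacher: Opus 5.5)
Your proposal is correct and is essentially the paper's proof. Both use the same transfer-matrix setup, with states given by the last part, $A(x)$ as the weighted adjacency matrix and initial vector $(b^kx,\ldots,b^kx^{k-1})$, leading to $C(x)=1+\mathbf{u}(I-A(x))^{-1}\mathbf{1}$ and then the adjugate formula. Your numerator $\det(I-A)+\mathbf{u}\,\operatorname{adj}(I-A)\mathbf{1}$ coincides with the paper's $\widetilde{P}_k=(1+vu^T)Q_k+P_k$, because $\operatorname{adj}(I-A)=Q_k(I-A)^{-1}$ and $(I-A)^{-1}=I+A(I-A)^{-1}$.
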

\begin{proof}
  We use a transfer matrix approach to derive the generating function for $c(n)$. Consider the graph $G$ with vertex set $V(G)=\{1,2,\ldots,k-1\}$ and edge set $E(G)=\{(i,j):i+j\geq k\}$. Assign a weight of $a_{ij}$ to each edge $(i,j)$. 
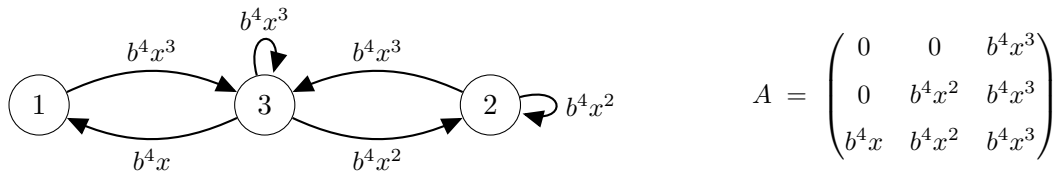
\begin{figure}[ht]
  \centering
  \tikzset{
    vertex/.style={
      circle, draw, fill=white,
      minimum size=8mm, inner sep=1pt, font=\large
    },
    myarrow/.style={
      -{triangle 45}, thick
    }
  }

  \begin{minipage}[c]{0.6\linewidth}
    \centering
    \begin{tikzpicture}[auto]
      \node[vertex] (v1) at (0,0)   {1};
      \node[vertex] (v3) at (3,0)   {3};
      \node[vertex] (v2) at (6,0)   {2};

      \draw[myarrow, loop right, looseness=8, shorten >=1pt]
        (v2) to node[right] {$b^4x^{2}$} (v2);
      \draw[myarrow, loop above, looseness=8, shorten >=1pt]
        (v3) to node[above] {$b^4x^{3}$} (v3);

      \draw[myarrow, bend left=25]
        (v1) to node[midway, above, sloped] {$b^4x^{3}$} (v3);
      \draw[myarrow, bend left=25]
        (v3) to node[midway, below, sloped] {$b^4x$}     (v1);

      \draw[myarrow, bend right=25]
        (v2) to node[midway, above, sloped] {$b^4x^{3}$} (v3);
      \draw[myarrow, bend right=25]
        (v3) to node[midway, below, sloped] {$b^4x^{2}$} (v2);
    \end{tikzpicture}
  \end{minipage}
  \hfill
  \begin{minipage}[c]{0.35\linewidth}
    \[
      A \;=\;
      \begin{pmatrix}
        0      & 0       & b^4x^{3}\\[6pt]
        0      & b^4x^{2}& b^4x^{3}\\[6pt]
        b^4x   & b^4x^{2}& b^4x^{3}
      \end{pmatrix}
    \]
  \end{minipage}

  \caption{The graph $G$ and its weighted adjacency matrix \(A\) for $k=4$.}
  \label{fig:linear-graph-k4}
\end{figure}

Then $A=A(x)$ is the $(k-1)\times (k-1)$ weighted adjacency matrix of $G$. When $k=4$, the graph $G$ is shown in Figure \ref{fig:linear-graph-k4}. Define the row vectors $v=(b^k x,b^k x^2,\ldots,b^k x^{k-1})$ and $u=(1,\ldots,1)$ of length $k-1$. Then the generating function $C(x)=\sum_{n\geq 0}c(n)x^n$ is given by
\begin{align*} 
  C(x)=1+v(I-A(x))^{-1}u^T=1+vu^T+vA(x)(I-A(x))^{-1}u^T,
\end{align*}
where $u^T$ denotes the transpose of $u$. It follows that 
\begin{align*}
  C(x)=\frac{\widetilde{P}_k(x)}{Q_k(x)},\qquad 
  Q_k(x)=\det(I-A(x)),
\end{align*}
with
\begin{align*}
  P_k(x)&:=vA(x)\,{\rm adj}(I-A(x))u^T,\qquad
  \widetilde{P}_k(x):=(1+vu^T)Q_k(x)+P_k(x).\qedhere
\end{align*}
\end{proof}
\subsection{Coprimeness of $P_k(x)$ and $Q_k(x)$}
We now work in the coefficient ring $K[x]$ where $K = \CC(b)$. Our immediate objective is to show that if $V=V_k(x;1,b)$ is written as a quotient $V=V_1(x)/V_2(x)$ of coprime polynomials in $K[x]$, then $V_2(x)=\det(I-A(x))$.  Recall that
$$
P_k(x) = v A(x) \, \mathrm{adj}(I - A(x)) \, u^T,
$$
and $Q_k(x)=\det(I-A(x))$. From the identity for $C(x)$ obtained in the proof of Proposition \ref{prop:vk}, the numerator of $C(x)$ is
\[
\widetilde{P}_k(x)=(1+vu^T)Q_k(x)+P_k(x).
\]
Hence it is enough to prove that $P_k(x)$ and $Q_k(x)$ are coprime. We decompose the matrix $A(x)$ into $A(x) = b^k T D(x)$, where $D(x) = \mathrm{diag}(x, x^2, \dots, x^{k-1})$ and $T=(t_{ij})$ is the $(k-1) \times (k-1)$ symmetric threshold matrix with $t_{ij} = 1$ if $i+j \ge k$, and $0$ otherwise. Notice that $v$ can be rewritten as $b^k \mathbf{1}^T D(x)$, and $u^T = \mathbf{1}$ (where $\mathbf{1}$ is the all-ones column vector). Thus, the auxiliary numerator evaluates to
\begin{equation} \label{eq:Pk_exact}
P_k(x) = b^{2k} \mathbf{1}^T D(x) T D(x) \, \mathrm{adj}(I - b^k T D(x)) \, \mathbf{1}.
\end{equation}

We require a result from linear algebra regarding the adjugate of a matrix.
\begin{lemma} \label{lem:adjugate}
Let $B$ be an $n \times n$ matrix over a field. If $\operatorname{rank}(B) = n-1$, then $\operatorname{rank}(\mathrm{adj}(B)) = 1$. Furthermore, there exists a nonzero scalar $c$ such that $\mathrm{adj}(B) = c \, w z^T$, where $w$ and $z^T$ are nonzero right and left nullvectors of $B$, respectively.
\end{lemma}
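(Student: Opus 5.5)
The plan is to use the fundamental identity $B\,\mathrm{adj}(B)=\mathrm{adj}(B)\,B=\det(B)\,I$ together with the rank characterization of the adjugate. The entries of $\mathrm{adj}(B)$ are, up to sign, the $(n-1)\times(n-1)$ minors of $B$. Since $\operatorname{rank}(B)=n-1$, some $(n-1)\times(n-1)$ minor is nonzero, so $\mathrm{adj}(B)\neq 0$ and $\operatorname{rank}(\mathrm{adj}(B))\geq 1$.

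For the upper bound, I use that $\det B=0$, so $B\,\mathrm{adj}(B)=0$. This means every column of $\mathrm{adj}(B)$ lies in the right null space $\ker B$. By rank–nullity this null space is one-dimensional, spanned by a nonzero vector $w$. Hence $\operatorname{rank}(\mathrm{adj}(B))\leq 1$, and combined with the lower bound the rank is exactly $1$.

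For the factorization, the column observation gives $\mathrm{adj}(B)=w\,y^T$ for some row vector $y^T$, and $y\neq 0$ because the adjugate is nonzero. Symmetrically, $\mathrm{adj}(B)\,B=0$ says every row of $\mathrm{adj}(B)$ lies in the left null space $\{z^T: z^TB=0\}$. This space is also one-dimensional, since $\operatorname{rank}(B^T)=n-1$; fix a nonzero spanning vector $z^T$.

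Now choose an index $i$ with $w_i\neq 0$. The $i$-th row of $\mathrm{adj}(B)$ is $w_i\,y^T$, which is a nonzero left nullvector, so $y^T=\lambda z^T$ for some scalar $\lambda\neq 0$. Setting $c=\lambda$ gives $\mathrm{adj}(B)=c\,wz^T$ with $c\neq 0$. If one prefers to start from arbitrary prescribed nullvectors $w$ and $z$, rescaling them only changes $c$ by a nonzero factor, so the statement holds for any choice.

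There is no substantial obstacle here. The only point needing care is justifying $\mathrm{adj}(B)\neq 0$, which follows from the definition of rank via a nonvanishing maximal-order minor. Everything else is a direct consequence of the adjugate identities and rank–nullity.
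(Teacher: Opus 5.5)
Your proof is correct and follows the same route as the paper: $\mathrm{adj}(B)\neq 0$ because some $(n-1)\times(n-1)$ minor is nonzero, and $B\,\mathrm{adj}(B)=\mathrm{adj}(B)\,B=0$ forces the columns into the one-dimensional right kernel and the rows into the one-dimensional left kernel. You also spell out the final factorization step, using a row index with $w_i\neq 0$, which the paper leaves implicit.
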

\begin{proof}
Since $\operatorname{rank}(B) = n-1$, the adjugate $\mathrm{adj}(B)$ is nonzero. From the identity $B \, \mathrm{adj}(B) = \mathrm{adj}(B) B = \det(B)I = 0$, the column space of $\mathrm{adj}(B)$ is contained in the right nullspace of $B$, and its row space is contained in the left nullspace of $B$. Since both nullspaces are one-dimensional, $\mathrm{adj}(B)$ must have rank 1 and can be factored as $c \, w z^T$ as claimed. 
\end{proof}

\begin{theorem}
For any integer $k \ge 2$, the polynomials $P_k(x)$ and $Q_k(x)$ are coprime in $K[x]$, where $K = \CC(b)$.
\end{theorem}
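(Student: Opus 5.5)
Suppose, for contradiction, that $P_k$ and $Q_k$ share a nonconstant factor in $K[x]$. Then they have a common root $x_0$ in the algebraic closure $\overline{K}$. Write $D_0=D(x_0)$ and $B=I-b^kTD_0$.

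First I record the basic facts at $x_0$. From $Q_k(x_0)=\det B=0$ we get $x_0\neq 0$, since $Q_k(0)=1$, and hence $D_0$ is invertible. I will show that $\operatorname{rank}B=k-2$. Then Lemma~\ref{lem:adjugate} gives $\operatorname{adj}(B)=c\,wz^T$ with $c\neq 0$, where $Bw=0$ and $z^TB=0$.

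\textbf{Corank one.} Since $B=(D_0^{-1}-b^kT)D_0$, it suffices to show that $D_0^{-1}-b^kT$ has corank at most one. The matrix $T$ is the anti-triangular threshold matrix: its row $i$ is supported on columns $j\ge k-i$. Consider a kernel vector $y$ of $D_0^{-1}-b^kT$, satisfying
\[
x_0^{-i}y_i=b^k\sum_{j\ge k-i}y_j .
\]
Taking differences of consecutive equations gives a two-term recursion. The equation for $i=1$ says $x_0^{-1}y_1=b^ky_{k-1}$, and the equation for $i$ minus the one for $i-1$ expresses $y_{k-i}$ in terms of $y_i$ and $y_{i-1}$. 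So all entries are determined successively from the single value $y_{k-1}$, pairing indices from the outside in. Hence the kernel is at most one-dimensional, and $\operatorname{rank}B=k-2$ exactly.

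\textbf{Nullvectors.} By the same triangular recursion, any nonzero right nullvector $w$ of $B$ has $w_{k-1}\neq 0$; otherwise all entries would vanish. Since $T$ is symmetric, $z^TB=0$ is equivalent to $(I-b^kD_0T)z=0$. Setting $z=D_0y'$, this becomes the same kernel problem, so $z$ is also determined up to scalar.

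\textbf{Evaluating $P_k$.} From $Bw=0$ we get $b^kTD_0w=w$. Therefore
\[
P_k(x_0)=b^{2k}c\,(\mathbf 1^TD_0TD_0w)(z^T\mathbf 1)=b^{k}c\,(\mathbf 1^TD_0w)(z^T\mathbf 1).
\]
So $P_k(x_0)=0$ forces either $\sum_i x_0^iw_i=0$ or $\sum_i z_i=0$. The second condition, written via $z=D_0y'$, is again of the form $\sum_i x_0^iy'_i=0$.

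\textbf{Main obstacle.} The remaining step, and the one I expect to be hardest, is to rule out $\mathbf 1^TD_0y=0$ for a kernel vector $y$ of $D_0^{-1}-b^kT$. I would use the row $i=k-1$ of the kernel equations: since $t_{k-1,j}=1$ for all $j$, it reads
\[
x_0^{-(k-1)}y_{k-1}=b^k\sum_j y_j .
\]
Summing the kernel relations weighted suitably should express $\sum_i x_0^iy_i$ as a nonzero multiple of $y_{k-1}$. Concretely, I would compute $\mathbf 1^TD_0\,(b^kTD_0y)=\mathbf 1^TD_0y$ and exploit the staircase structure of $T$, whose column sums are $j$. If this direct identity resists, my fallback is a degree/specialization argument. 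Since $b$ is transcendental, it suffices to exhibit a single specialization where the resultant $\operatorname{Res}_x(P_k,Q_k)$ is nonzero. I would take $b\to 0$ asymptotics: there $Q_k\to1$ and the roots of $Q_k$ blow up, and I would check that the leading terms of $P_k$ at those large roots are nonzero.
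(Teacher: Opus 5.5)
There is a genuine gap, in the final step. Your setup is the paper's: a common root $x_0\neq 0$, corank one via the outside-in recursion anchored at the last coordinate, $\operatorname{adj}(B)=c\,wz^T$, and the reduction $P_k(x_0)=c\,b^k(\mathbf 1^TD_0w)(z^T\mathbf 1)$. But you leave the last step open and, more importantly, misstate it.

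In your ``main obstacle'' paragraph you conflate $\ker B$ with $\ker(D_0^{-1}-b^kT)$. Since $B=(D_0^{-1}-b^kT)D_0$, these kernels are related by $y=D_0w$. So the quantity that must be nonzero is $\sum_i x_0^iw_i=\mathbf 1^TD_0w=\mathbf 1^Ty$. It is not $\mathbf 1^TD_0y=\sum_i x_0^{2i}w_i$, which never appears in $P_k(x_0)$. The same conflation shows in your proposed computation: it relies on $b^kTD_0y=y$, which holds for $y\in\ker B$, not for $y\in\ker(D_0^{-1}-b^kT)$.

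Similarly, $B^T=D_0(D_0^{-1}-b^kT)$, so $z$ itself lies in $\ker(D_0^{-1}-b^kT)=D_0\ker B$. Hence $z\propto D_0w$, your two alternatives are the same condition, and $P_k(x_0)$ is a nonzero multiple of $(\mathbf 1^Tz)^2$, as in the paper.

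With the correct target, the row-$(k-1)$ equation you already wrote down finishes the proof at once. It gives $b^k\sum_j y_j=x_0^{-(k-1)}y_{k-1}$. By your own corank argument, $y_{k-1}\neq 0$ for every nonzero kernel vector. So $\mathbf 1^Ty\neq 0$, and hence $P_k(x_0)\neq 0$. In the paper's notation this is $w_{k-1}=b^k\sum_j\lambda^jw_j=b^k\,\mathbf 1^Tz$, and the contradiction comes from the injectivity of $w\mapsto w_{k-1}$ on the kernel.

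As submitted, though, you aim at the wrong nonvanishing statement via an unspecified weighted sum. Your $b\to 0$ resultant fallback is only a sketch: at $b=0$ one has $Q_k=1$ and $P_k\equiv 0$, so it would need a real asymptotic analysis. Neither amounts to a proof of the final step. Replacing that paragraph with the row-$(k-1)$ observation above completes the argument.
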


\begin{proof}
Assume, for the sake of contradiction, that $\gcd(P_k, Q_k) \neq 1$ in $K[x]$. Then there exists a root $\lambda \in \overline{K}$ (the algebraic closure of $K$) such that $P_k(\lambda) = Q_k(\lambda) = 0$. Since $Q_k(0) = \det(I) = 1$ in $K$, we know that $\lambda \neq 0$. 

The condition $Q_k(\lambda) = 0$ implies that the matrix $I - b^k T D(\lambda)$ is singular. Let $L = K(\lambda) = \CC(b)(\lambda)$. Choose a vector $w \in L^{k-1}$ such that $(I - b^k T D(\lambda)) w = 0$. This implies $w = b^k T D(\lambda) w$. Writing this relation out row-by-row gives the linear system
\begin{equation} \label{eq:recurrence}
w_i = b^k \sum_{j= k-i}^{k-1} \lambda^j w_j \quad \text{for} \quad 1 \le i \le k-1
\end{equation}
By subtracting adjacent rows, we obtain the difference equation $w_i - w_{i-1} = b^k \lambda^{k-i} w_{k-i}$ for $i \ge 2$, with the base case $w_1 = b^k \lambda^{k-1} w_{k-1}$ at $i=1$. 

We define the linear functional $\phi: \ker(I - b^k T D(\lambda)) \to L$ by $\phi(w) = w_{k-1}$. We claim that $\phi$ is injective.

\textit{Proof of claim:}
Suppose $\phi(w) = w_{k-1} = 0$. We will prove by induction on $m$ that $w_m = 0$ and $w_{k-m} = 0$ for all $1 \le m \le k-1$. Consider the base case $m=1$. We are given $w_{k-1} = 0$. The system \eqref{eq:recurrence} at $i=1$ yields $w_1 = b^k \lambda^{k-1} (0) = 0$.
Now assume $w_{k-j} = 0$ and $w_j = 0$ for all $1 \le j < m$. We evaluate the difference equation at $i = k-m+1$:
    \[
    w_{k-m+1} - w_{k-m} = b^k \lambda^{m-1} w_{m-1}
    \]
    By the inductive hypothesis, $w_{k-m+1} = 0$ and $w_{m-1} = 0$ which implies $w_{k-m} = 0$. Next, evaluate the difference equation at $i = m$:
    \[
    w_m - w_{m-1} = b^k \lambda^{k-m} w_{k-m}
    \]
    Since $w_{m-1} = 0$ and we just established $w_{k-m} = 0$, we have $w_m = 0$. By induction, every component of $w$ is zero, proving the claim.

Because $\phi$ maps the kernel injectively into the one-dimensional space $L$, we have $\dim \ker(I - b^k T D(\lambda)) \le 1$. Since the kernel is nontrivial, $\dim \ker(I - b^k T D(\lambda)) = 1$. 
Let $w$ be a nonzero right nullvector for $I - b^k T D(\lambda)$. Define $z = D(\lambda) w$ and note that $z$ is nonzero since $\lambda \neq 0$. We verify $z^T$ is a left nullvector for $I - b^k T D(\lambda)$:
\begin{equation*}
(I - b^k T D(\lambda))^T z = (I - b^k D(\lambda) T) D(\lambda) w = D(\lambda) (I - b^k T D(\lambda)) w = 0.
\end{equation*}
By Lemma \ref{lem:adjugate}, $\mathrm{adj}(I - b^k T D(\lambda)) = c \, w z^T$ for some nonzero scalar $c \in L$. Substituting this into Equation \eqref{eq:Pk_exact} at $x = \lambda$ yields:
\begin{align*}
P_k(\lambda) &= b^{2k} \mathbf{1}^T D(\lambda) T D(\lambda) (c \, w z^T) \mathbf{1} \\
&= c \, b^{2k} \left( \mathbf{1}^T D(\lambda) T D(\lambda) w \right) \left( z^T \mathbf{1} \right)
\end{align*}
Since $b^k T D(\lambda) w = w$, it follows that 
$$
P_k(\lambda) = c \, b^k (\mathbf{1}^T z)^2.
$$

For $P_k(\lambda)$ to equal $0$, we must have $\mathbf{1}^T z = 0$ (since $c \neq 0$). Evaluating the sum from Equation \eqref{eq:recurrence} precisely at $i = k-1$ gives:
$$
w_{k-1} = b^k \sum_{j=1}^{k-1} \lambda^j w_j = b^k \sum_{j=1}^{k-1} z_j = b^k (\mathbf{1}^T z)
$$
However, if $P_k(\lambda) = 0$, then $\mathbf{1}^T z = 0$, which implies $w_{k-1} = 0$. As proven earlier, $w_{k-1} = 0$ implies $w = 0$, which contradicts the fact that $w$ is a nonzero nullvector. Therefore, $P_k(\lambda)$ cannot be zero, and the polynomials $P_k(x)$ and $Q_k(x)$ are coprime in $K[x]$.
\end{proof}

\begin{corollary}
The generating function $V_k(x;1,b)$ can be expressed as a quotient of coprime polynomials in $\CC(b)[x]$ with denominator $Q_k(x)=\det(I-A(x))$. 
\end{corollary}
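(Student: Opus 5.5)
The plan is to combine Proposition~\ref{prop:VfromC} with Proposition~\ref{prop:vk} and then control cancellation using the coprimeness theorem just proved. By Proposition~\ref{prop:VfromC},
\[
V_k(x;1,b)=x+b^{2k}x^k+2b^kx^kC(x),
\]
and by Proposition~\ref{prop:vk}, $C(x)=\widetilde{P}_k(x)/Q_k(x)$ with $Q_k(x)=\det(I-A(x))$. Putting everything over the common denominator $Q_k$ gives
\[
V_k(x;1,b)=\frac{(x+b^{2k}x^k)Q_k(x)+2b^kx^k\widetilde{P}_k(x)}{Q_k(x)}.
\]
It therefore suffices to show that the numerator $N(x)$ of this expression is coprime to $Q_k(x)$ in $\CC(b)[x]$.

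To do this, reduce $N(x)$ modulo $Q_k$. Since $\widetilde{P}_k=(1+vu^T)Q_k+P_k$, we get
\[
N(x)\equiv 2b^kx^kP_k(x)\pmod{Q_k(x)}.
\]
Hence $\gcd(N,Q_k)=\gcd(2b^kx^kP_k,\,Q_k)$.

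Now factor this gcd. The constant $2b^k$ is a unit in $K=\CC(b)$. The polynomial $x^k$ is coprime to $Q_k$ because $Q_k(0)=\det(I)=1$, so $x$ does not divide $Q_k$. Finally, $P_k$ and $Q_k$ are coprime by the preceding theorem. Combining these three facts, $\gcd(N,Q_k)=1$. So the fraction is already in lowest terms, and its denominator is $Q_k(x)=\det(I-A(x))$, unique up to a unit of $K$. One can normalize so that the constant term is $1$, which is consistent with $Q_k(0)=1$.

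The only point needing care is the second step: verifying that the extra terms $(x+b^{2k}x^k)Q_k$ and $(1+vu^T)Q_k$ are multiples of $Q_k$ and so vanish modulo $Q_k$. Here $v$ and $u$ have polynomial entries, so $1+vu^T\in K[x]$ and the congruence holds in $K[x]$. Beyond this there is no real obstacle; the substantive work was the coprimeness theorem.
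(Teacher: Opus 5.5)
Your proof is correct and essentially the same as the paper's. The paper first derives $\gcd(\widetilde{P}_k,Q_k)=1$ from $\widetilde{P}_k-(1+vu^T)Q_k=P_k$ and then removes the factor $x^k$ using $Q_k(0)=1$; you fold both steps into the single congruence $N(x)\equiv 2b^kx^kP_k(x)\pmod{Q_k(x)}$, which is slightly more direct.
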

\begin{proof}
Let $d(x)$ be a common divisor of $\widetilde{P}_k(x)$ and $Q_k(x)$. Since
\[
\widetilde{P}_k(x)-(1+vu^T)Q_k(x)=P_k(x),
\]
the polynomial $d(x)$ also divides $P_k(x)$. By the theorem, $\gcd(P_k,Q_k)=1$, so $d(x)=1$. Hence $\gcd(\widetilde{P}_k,Q_k)=1$.

Now Proposition \ref{prop:VfromC} gives
\[
V_k(x;1,b)=x+b^{2k}x^k+2b^kx^k\frac{\widetilde{P}_k(x)}{Q_k(x)}
=\frac{(x+b^{2k}x^k)Q_k(x)+2b^kx^k\widetilde{P}_k(x)}{Q_k(x)}.
\]
If a polynomial divides both $Q_k(x)$ and the numerator above, then it divides
\(2b^kx^k\widetilde{P}_k(x)\), hence it divides $x^k\widetilde{P}_k(x)$. Since $Q_k(0)=1$, such a divisor cannot have $x$ as a factor. Therefore it must divide $\widetilde{P}_k(x)$, and thus it is $1$. This proves the claim.
\end{proof}
\subsection{A multivariate determinant}    
We do not know a direct method to evaluate $\det(I-A(x))$. We proceed by considering a multivariate generalization, which specializes to the desired determinant. Consider the $r\times r$ determinant $$D_r(\xx)=D_r(x_1,\ldots,x_r):=\det (I-M_r),$$
where  $M_r=(m_{ij})$ denotes the $r\times r$ matrix with entries
\begin{equation}\label{eq:mat}
  m_{ij}=
  \begin{cases}
     x_j & \mbox{if }i+j\geq r+1,\\ 
    0 & \mbox{otherwise.}
  \end{cases}
\end{equation}
For example, we have
\begin{align*}
  D_3(\xx) &= \det \begin{pmatrix}
1 & 0 & - x_3 \\
0 & 1- x_2 & - x_3 \\
- x_1 & - x_2 & 1- x_3
  \end{pmatrix}=1-x_2-x_3-x_1x_3+x_1x_2x_3.
\end{align*}
Note that
\begin{equation}
  \label{eq:determ}
  \det(I-A(x))=D_{k-1}(b^k x,b^k x^2,\ldots,b^k x^{k-1}).
\end{equation}
For each positive integer $r$, let $\oc_{\leq r}$ denote the set of all odd compositions of integers not exceeding $r$ (including zero).
\begin{example}
  $\oc_{\leq 4}=\{\emptyset,(1),(1,1),(1,1,1),(3),(1,1,1,1),(3,1),(1,3)\}$.
\end{example}
\begin{remark}
  If we set $b_r=|\oc_{\leq r}|$, then by conditioning on the size of the first part of the odd composition, one obtains the recurrence $b_{r+2}=b_{r+1}+b_r$ for $r\geq1$ with $b_1=2$ and $b_2=3$. It follows that $b_r=F_{r+2}$, a Fibonacci number. 
\end{remark}
\begin{definition}
To each positive integer $r$, associate a canonical permutation $\sigma=\sigma_r$ defined by
\begin{align*}
  \sigma(i)=
  \begin{cases}
    \frac{i}{2} & i \mbox{ even}\\
    r+\frac{1-i}{2} & i \mbox{ odd}.
  \end{cases}
\end{align*}
\end{definition}

\begin{example}
For $r=7$, we have $  \sigma_r= 7162534$.
\end{example}
\begin{definition}\label{def:sr}
  Given a composition $\alpha\in \oc_{\leq r}$, define
  \begin{align*}
    S_r(\alpha)=\sigma_r(\alpha_1)+\sigma_r(\alpha_1+\alpha_2)+\cdots.
  \end{align*}
\end{definition}
\begin{example}
  For $r=6$ we have $\sigma_r=615243$. If $\alpha=(1,3,1)$, then $S_r(\alpha)=\sigma(1)+\sigma(4)+\sigma(5)=6+2+4=12$.
\end{example}
\begin{definition}\label{def:varphir}
For each positive integer $r$, define
\begin{equation*}
 \varphi_r(\xx)= \varphi_r(x_1,\ldots,x_r):=\sum_{\alpha\in \oc_{\leq r}}(-1)^{\left\lfloor \frac{\ell(\alpha)+1}{2}\right\rfloor}x_\alpha,
\end{equation*}
where   $\ell(\alpha)$ denotes the number of parts of $\alpha$ and  $x_\alpha:=\prod_{i=1}^{\ell(\alpha)}x_{\sigma_r(\alpha_1+\cdots+\alpha_i)}$.  
\end{definition}
\begin{example}
  We have $ \varphi_3(x_1,x_2,x_3)=1- x_2-x_3-x_1x_3+x_1x_2x_3.$
\end{example}
When the positive integer $r$ is clear from the context, we simply write $\sigma$ for $\sigma_r$. We will prove that $D_r(\xx)=\varphi_r(\xx)$ by showing that they both satisfy the same recursion.
\begin{proposition}\label{prop:frec}
  We have $\varphi_1(\xx)=1-x_1,\varphi_2(\xx)=1 -  x_2 - x_1 x_2$ and for $r\geq 3$,
  \[\varphi_r(x_1, \ldots , x_r) = \left (1 - x_1x_r + x_r \frac{\partial}{\partial x_{r-1}} \right ) \varphi_{r-2}(x_2, \ldots , x_{r-1}). \] 
\end{proposition}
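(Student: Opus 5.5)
The plan is a direct bijective decomposition of the signed sum in Definition~\ref{def:varphir}. I will verify $\varphi_1$ and $\varphi_2$ by listing $\oc_{\leq 1}=\{\emptyset,(1)\}$ and $\oc_{\leq 2}=\{\emptyset,(1),(1,1)\}$, using $\sigma_1=1$ and $\sigma_2=21$. For $r\geq 3$, I encode each $\alpha\in\oc_{\leq r}$ by its set of partial sums $s_1<s_2<\cdots<s_\ell\leq r$. These are exactly the increasing sequences with $s_1$ odd and all consecutive differences odd, so $s_i\equiv i \pmod 2$. Then $x_\alpha=\prod_i x_{\sigma_r(s_i)}$, and the sign depends only on $\ell$.

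\emph{Index shift.} The first step is to compare $\sigma_r$ with $\sigma_{r-2}$ after relabelling the variables. In $\varphi_{r-2}(x_2,\ldots,x_{r-1})$, position $j$ carries the variable $x_{j+1}$. A direct check from the definition gives $\sigma_{r-2}(i)+1=\sigma_r(i+2)$ for all $1\leq i\leq r-2$. For $i$ even both sides equal $i/2+1$, and for $i$ odd both equal $r-1+(1-i)/2$. Consequently, a partial sum $s\geq 3$ of $\alpha$ contributes the same variable as the partial sum $s-2$ in the shifted setting. I also record the boundary values $\sigma_r(1)=r$ and $\sigma_r(2)=1$. In the shifted setting, $\sigma_{r-2}(1)+1=r-1$, so $x_{r-1}$ occurs in a term of $\varphi_{r-2}(x_2,\ldots,x_{r-1})$ exactly when that composition has first partial sum $1$. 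It then occurs to the first power, since the partial sums are distinct.

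\emph{Splitting into three classes.} I split $\oc_{\leq r}$ according to the initial partial sums.
\begin{itemize}
\item[(a)] $\alpha=\emptyset$ or $s_1\geq 3$. Subtracting $2$ from every partial sum gives a bijection onto all of $\oc_{\leq r-2}$, since $s_1-2$ is odd and differences are unchanged. It preserves length and weight, so this class contributes $\varphi_{r-2}(x_2,\ldots,x_{r-1})$.
\item[(b)] $s_1=1,\ s_2=2$. Deleting these two sums and subtracting $2$ from the rest gives a bijection onto $\oc_{\leq r-2}$. The weight acquires the factor $x_{\sigma_r(1)}x_{\sigma_r(2)}=x_rx_1$. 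The length grows by $2$, so the sign changes because $\lfloor(\ell+3)/2\rfloor=\lfloor(\ell+1)/2\rfloor+1$. This class contributes $-x_1x_r\,\varphi_{r-2}(x_2,\ldots,x_{r-1})$.
\item[(c)] $s_1=1$ and either $\ell=1$ or $s_2\geq 4$. Map $\alpha$ to the composition with partial sums $1,s_2-2,s_3-2,\ldots$. This is legal because $s_2-2-1=s_2-3$ is odd and positive. The image is exactly the set of compositions in $\oc_{\leq r-2}$ with first part $1$, and length is preserved. The weight changes only by replacing the factor $x_{r-1}$ (coming from the shifted sum $1$) with $x_r$ (coming from $\sigma_r(1)$). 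By the index-shift remark, summing over this class gives exactly $x_r\,\partial\varphi_{r-2}/\partial x_{r-1}$.
\end{itemize}

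Adding the three contributions yields the stated recursion. The main obstacle is keeping the bookkeeping honest. One must check that (a)--(c) exhaust $\oc_{\leq r}$: if $s_1=1$, then $s_2$ is even, so either $s_2=2$ or $s_2\geq 4$. One must also check that the maps are bijections with the claimed images, in particular that the case $\ell=1$, $\alpha=(1)$ lands on $(1)$ in (c). Finally, the derivative has to account for the terms containing $x_{r-1}$ exactly once. That last point rests entirely on the identity $\sigma_{r-2}(i)+1=\sigma_r(i+2)$ and the injectivity of $\sigma_{r-2}$, which guarantees that $x_{r-1}$ arises only from the partial sum $1$.
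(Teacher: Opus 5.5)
Your proposal is correct and follows essentially the same route as the paper: your classes (a), (b), (c) are exactly the paper's $C_3$, $C_1$, $C_2$, with the same bijections (lower the first part by $2$, delete the two leading $1$'s, lower the second part by $2$) and the same key identity $\sigma_r(t+2)=\sigma_{r-2}(t)+1$. Your partial-sum phrasing, and your observation that $x_{r-1}$ can only come from the partial sum $1$, justify the derivative term $x_r\,\partial/\partial x_{r-1}$ somewhat more explicitly than the paper does.
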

\begin{proof}
The initial values $\varphi_1({\bf x})=1-x_1$ and $\varphi_2({\bf x})=1-x_2-x_1x_2$ follow directly from Definition~\ref{def:varphir}. Suppose $r\geq 3$ and write $\oc_{\leq r}=C_1\sqcup C_2\sqcup C_3$, where
\begin{align*}
  C_1&:=\{\alpha\in \oc_{\leq r}:\ell(\alpha)\geq 2,\ \alpha_1=\alpha_2=1\},\\
  C_2&:=\{\alpha\in \oc_{\leq r}:\alpha_1=1\text{ and }(\ell(\alpha)=1\text{ or }\alpha_2\geq 3)\},\\
  C_3&:=\{\emptyset\}\cup\{\alpha\in \oc_{\leq r}:\alpha_1\geq 3\}.
\end{align*}
Let $\sigma=\sigma_r$ and $\hat{\sigma}=\sigma_{r-2}$. Then $\sigma(1)=r$, $\sigma(2)=1$, and for $1\leq t\leq r-2$,
\[
\sigma(t+2)=\hat{\sigma}(t)+1.
\]
For $C_1$, every element has the form $\alpha=(1,1,\beta)$ with $\beta\in \oc_{\leq r-2}$. Hence
\begin{align*}
\sum_{\alpha\in C_1}(-1)^{\lfloor(\ell(\alpha)+1)/2\rfloor}x_\alpha
&=-x_1x_r\sum_{\beta\in \oc_{\leq r-2}}(-1)^{\lfloor(\ell(\beta)+1)/2\rfloor}
\prod_{i=1}^{\ell(\beta)}x_{\hat{\sigma}(\beta_1+\cdots+\beta_i)+1}\\
&=-x_1x_r\,\varphi_{r-2}(x_2,\ldots,x_{r-1}).
\end{align*}
For $C_3$, define a bijection $\Psi_3:C_3\to \oc_{\leq r-2}$ by
\[
\Psi_3(\emptyset)=\emptyset,\qquad
\Psi_3(\alpha_1,\alpha_2,\ldots)=(\alpha_1-2,\alpha_2,\ldots)\ \text{if }\alpha_1\geq 3.
\]
Using $\sigma(t+2)=\hat{\sigma}(t)+1$, the monomials and signs are preserved under $\Psi_3$, so
\[
\sum_{\alpha\in C_3}(-1)^{\lfloor(\ell(\alpha)+1)/2\rfloor}x_\alpha
=\varphi_{r-2}(x_2,\ldots,x_{r-1}).
\]

For $C_2$, first note that a monomial $x_\alpha$ contains the factor $x_r$ if and only if $\alpha_1=1$ (because $\sigma(1)=r$). Therefore
\[
\frac{\partial}{\partial x_r}\varphi_r(x_1,\ldots,x_r)
=\sum_{\substack{\alpha\in \oc_{\leq r}\\ \alpha_1=1}}
(-1)^{\lfloor(\ell(\alpha)+1)/2\rfloor}
\prod_{i=2}^{\ell(\alpha)}x_{\sigma(\alpha_1+\cdots+\alpha_i)}.
\]
Now define a bijection
\[
\Psi_2:C_2\longrightarrow\{\beta\in \oc_{\leq r-2}:\beta_1=1\}
\]
by
\[
\Psi_2(1)=(1),\qquad
\Psi_2(1,\alpha_2,\alpha_3,\ldots)=(1,\alpha_2-2,\alpha_3,\ldots)\ \text{if }\alpha_2\geq 3.
\]
Again using $\sigma(t+2)=\hat{\sigma}(t)+1$, we obtain
\[
\sum_{\alpha\in C_2}(-1)^{\lfloor(\ell(\alpha)+1)/2\rfloor}x_\alpha
=x_r\frac{\partial}{\partial x_{r-1}}\varphi_{r-2}(x_2,\ldots,x_{r-1}).
\]
Adding the three contributions gives
\begin{align*}
\varphi_r(x_1,\ldots,x_r)
&=\left(1-x_1x_r+x_r\frac{\partial}{\partial x_{r-1}}\right)\varphi_{r-2}(x_2,\ldots,x_{r-1}).\qedhere
\end{align*}
\end{proof}
We now consider the determinant $D_r(\xx)$. Given a matrix $A$, let $A^{j_1,\ldots,j_k}_{i_1,\ldots,i_k}$ denote the submatrix of $A$ obtained by omitting the rows with indices $i_1,\ldots,i_k$ and the columns with indices $j_1,\ldots,j_k$. The following identity (see Aigner \cite[Lem. 7.7]{MR2339282} or Krattenthaler \cite[Prop. 10]{MR1701596}) is the key to obtaining a recursion satisfied by $D_r(\xx)$.
\begin{lemma}[Jacobi-Desnanot identity]
  If $A$ is an $r\times r$ matrix with $r\geq 2$, then
  \begin{align*}
    \det A \cdot \det A^{1,r}_{1,r}=\det A^{1}_{1}\cdot  \det A^{r}_{r}-\det A^{r}_{1}\cdot \det A^{1}_{r}.
  \end{align*}
\end{lemma}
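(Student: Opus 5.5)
This is the classical Desnanot--Jacobi (Dodgson condensation) identity, and I would prove it with Jacobi's complementary minor theorem, specialised to the index set $\{1,r\}$. I first treat the case $\det A\neq 0$ and then remove that assumption by a density argument.

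\textbf{Step 1 (invertible case).} Write $A^{-1}=\operatorname{adj}(A)/\det A$. By the cofactor formula,
\[
(\operatorname{adj}A)_{ij}=(-1)^{i+j}\det A^{i}_{j}.
\]
Now consider the $2\times 2$ submatrix of $\operatorname{adj}(A)$ on rows and columns $\{1,r\}$. Its determinant is
\[
\det A^1_1\det A^r_r-(-1)^{1+r}\det A^{1}_{r}\cdot(-1)^{r+1}\det A^{r}_{1}
=\det A^1_1\det A^r_r-\det A^r_1\det A^1_r .
\]
So the identity reduces to showing that this $2\times 2$ minor of $\operatorname{adj}A$ equals $\det A\cdot \det A^{1,r}_{1,r}$.

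\textbf{Step 2 (Jacobi's lemma for one minor).} Let $B$ be the identity matrix with its first and last columns replaced by the first and last columns of $\operatorname{adj}(A)$. Then $AB$ has first column $\det A\, e_1$ and last column $\det A\, e_r$, and its middle columns are the middle columns of $A$.

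For $\det(AB)$, expanding along columns $1$ and $r$ gives $(\det A)^2\det A^{1,r}_{1,r}$. For $\det B$, expanding along the middle identity columns leaves exactly the $2\times2$ minor of $\operatorname{adj}A$ on rows and columns $\{1,r\}$.

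Hence $\det A\cdot(\text{that minor})=(\det A)^2\det A^{1,r}_{1,r}$. Dividing by $\det A\neq 0$ and combining with Step 1 proves the identity in this case. The only care needed is the signs in the two Laplace expansions: since positions $1$ and $r$ are taken in the same order in both expansions, the signs cancel.

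\textbf{Step 3 (general case).} Both sides are polynomials in the entries of $A$. They agree on the Zariski-dense set $\{\det A\neq 0\}$, so they agree identically; equivalently, treat the entries as indeterminates over $\mathbb{Z}$. For $r=2$ we interpret $\det A^{1,2}_{1,2}=1$, and the identity reduces to the $2\times2$ determinant formula.

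The main obstacle is just the sign bookkeeping in Step 2. Everything else is routine, and since the statement is standard one could alternatively simply cite Aigner or Krattenthaler as the paper does.
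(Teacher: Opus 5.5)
Your proof is correct. The paper does not actually prove this identity. It only cites Aigner (Lem.~7.7) and Krattenthaler (Prop.~10), so your argument is a genuinely different, self-contained route rather than a reconstruction of the paper's.

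What you give is the standard adjugate proof, namely the case $I=\{1,r\}$ of Jacobi's theorem on minors of $\operatorname{adj}A$.
\begin{itemize}
\item Your cofactor formula $(\operatorname{adj}A)_{ij}=(-1)^{i+j}\det A^{i}_{j}$ matches the paper's convention that subscripts index deleted rows and superscripts deleted columns. The sign product $(-1)^{1+r}(-1)^{r+1}=1$ is right, so the $2\times 2$ minor of $\operatorname{adj}A$ on $\{1,r\}$ is exactly the right-hand side.
\item In Step 2, both determinant evaluations use only diagonal pivots. In $\det(AB)$ these are the entries $\det A$ at positions $(1,1)$ and then $(r-1,r-1)$ of the minor. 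For $\det B$, after simultaneously reordering rows and columns it is block-triangular with an identity block. So no sign issues arise. The argument also works verbatim for $r=2$, where $B=\operatorname{adj}A$.
\item For Step 3, the cleaner version is the one you mention second. In the integral domain $\mathbb{Z}[a_{ij}]$ you have $\det A\cdot\bigl(\text{minor}-\det A\,\det A^{1,r}_{1,r}\bigr)=0$ with $\det A\neq 0$. Cancel, then specialize to any commutative ring. This covers the paper's application to matrices with entries in $\mathbb{Z}[x_1,\ldots,x_r]$ without any density argument over a field.
\end{itemize}

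The trade-off is simple. The citation buys brevity, while your route makes the determinant section self-contained at the cost of a short paragraph.
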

\begin{theorem}
  We have $D_r(\xx)=\varphi_r(\xx)$ for each positive integer $r$.
\end{theorem}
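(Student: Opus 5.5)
The plan is induction on $r$: show that $D_r(\xx)$ satisfies the same initial values and the same recursion as $\varphi_r(\xx)$ in Proposition~\ref{prop:frec}. For the base cases, $D_1=1-x_1$ directly. For $r=2$ the matrix is
$\begin{pmatrix}1&-x_2\\-x_1&1-x_2\end{pmatrix}$, whose determinant is $1-x_2-x_1x_2=\varphi_2$. For $r\ge3$ write $A=I-M_r$ and apply the Jacobi--Desnanot identity to $A$.

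The first step is to identify the central minor. Deleting rows and columns $1$ and $r$ leaves the indices $2,\ldots,r-1$. Re-indexing by $i'=i-1$ and $j'=j-1$ turns the threshold $i+j\ge r+1$ into $i'+j'\ge (r-2)+1$, with the variable in column $j'$ equal to $x_{j'+1}$. Hence
\[
\det A^{1,r}_{1,r}=D_{r-2}(x_2,\ldots,x_{r-1}).
\]
The structure of $A$ at its borders is very rigid:
\begin{itemize}
\item row $1$ has nonzero entries only at $(1,1)$, equal to $1$, and at $(1,r)$, equal to $-x_r$;
\item column $1$ has nonzero entries only at $(1,1)$, equal to $1$, and at $(r,1)$, equal to $-x_1$;
\item row $r$ is $(-x_1,-x_2,\ldots,-x_{r-1},1-x_r)$;
\item column $r$ is $-x_r$ everywhere except $1-x_r$ in position $(r,r)$.
\end{itemize}
I will also use that $D_r$ is affine in each $x_j$, since $x_j$ occurs only in column $j$. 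Consequently, setting a variable to $0$ and taking $\partial/\partial x_j$ together recover the whole polynomial, and $\partial/\partial x_j$ of a determinant amounts to replacing column $j$ by its $x_j$-coefficient vector.

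Next I would express the four minors $\det A^1_1$, $\det A^r_r$, $\det A^r_1$, $\det A^1_r$ in terms of $E:=D_{r-2}(x_2,\ldots,x_{r-1})$ and $E':=\partial E/\partial x_{r-1}$. Each is obtained by expanding along the sparse surviving border row or column.
\begin{itemize}
\item In $A^r_r$ (indices $1,\ldots,r-1$), row $1$ becomes $e_1$ because the entry $-x_r$ has been deleted. So $\det A^r_r$ equals the determinant on indices $2,\ldots,r-1$, but now with threshold $i+j\ge r+1$. That is the central block with its "first anti-diagonal" removed, which I expect to equal $E$ with some variable specialised or shifted; I must check this carefully.
\item In $A^1_1$ the surviving row $r$ contains $1-x_r$ and $-x_2,\ldots,-x_{r-1}$. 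Using column $r$ and linearity in $x_r$, I expect $\det A^1_1=E-x_r(\cdots)$, where $(\cdots)$ is a bordered determinant related to $E'$. The column $r-1$ of the inner block is exactly where the last row's extra entry interacts.
\item The off-diagonal minors $A^r_1$ and $A^1_r$ carry the factors $x_r$ and $x_1$ respectively after one further expansion.
\end{itemize}
Substituting these into Jacobi--Desnanot should give $\det A\cdot E=E\cdot\bigl(1-x_1x_r+x_r\,\partial_{x_{r-1}}\bigr)E$, possibly after invoking a second Desnanot-type identity (or the multilinearity relation $E=E|_{x_{r-1}=0}+x_{r-1}E'$) on the inner block. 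Dividing by the nonzero polynomial $E$ then gives $D_r=(1-x_1x_r+x_r\partial_{x_{r-1}})D_{r-2}(x_2,\ldots,x_{r-1})$. This matches Proposition~\ref{prop:frec}, and induction completes the proof.

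The main obstacle is the exact evaluation of the four border minors. In particular, the products $\det A^1_1\det A^r_r-\det A^r_1\det A^1_r$ must collapse to a multiple of $E$ rather than leaving a quadratic expression in $E$ and $E'$. If that collapse is not transparent, my fallback is to avoid Jacobi--Desnanot altogether. Instead I would expand $\det A$ along row $1$ (two terms) and then along column $1$ (two terms), and recognise the resulting $(r-2)$-minors directly. The minor with rows and columns $1$ and $r$ removed is $E$. The term involving $x_r$ alone should be $x_r$ times the determinant of the central block with column $r-1$ replaced by the appropriate coefficient vector, namely $x_r\,\partial E/\partial x_{r-1}$. This identification uses that the only cross term surviving in the last row is governed by column $r-1$ of the inner threshold pattern.
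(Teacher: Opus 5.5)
Your strategy is the paper's: check the base cases, apply Jacobi--Desnanot to $A=I-M_r$, and match the recursion of Proposition~\ref{prop:frec}. However, the proposal stops exactly where the content is, and one of your predicted minors is wrong. After expanding $A^r_r$ along its first row, which is $e_1$, what remains is the matrix on the index set $\{2,\ldots,r-1\}$ with entries $\delta_{ij}-x_j[i+j\ge r+1]$. That is literally the same matrix as $A^{1,r}_{1,r}$; you re-indexed one of them and not the other. So $\det A^r_r=E$, with no variable specialised or shifted. Expanding $A^r_1$ along its column $1$ ($-x_1$ times the last unit vector) and $A^1_r$ along its row $1$ ($-x_r$ times the last unit vector) gives $\det A^r_1=(-1)^{r+1}x_1E$ and $\det A^1_r=(-1)^{r+1}x_rE$. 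Jacobi--Desnanot therefore collapses at once to $\det A=\det A^1_1-x_1x_rE$. No second Desnanot identity is needed, and no quadratic expression in $E,E'$ arises. Your fallback of expanding $\det A$ along row $1$ gives this relation directly, and is in fact more economical than the paper's use of Jacobi--Desnanot, which only recovers that cofactor expansion after dividing by $E$.

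The step you never prove is $\det A^1_1=E+x_r\,\partial E/\partial x_{r-1}$. Your remark about ``the only cross term surviving in the last row'' is not yet an argument. The missing observation concerns rows $r-1$ and $r$ of $A$ restricted to columns $2,\ldots,r$. These are $(-x_2,\ldots,-x_{r-2},1-x_{r-1},-x_r)$ and $(-x_2,\ldots,-x_{r-1},1-x_r)$, because $(r-1)+j\ge r+1$ for every $j\ge 2$. Subtracting the first from the second turns the last row of $A^1_1$ into $(0,\ldots,0,-1,1)$. Expand along that row:
\begin{itemize}
\item The entry $1$ contributes $E$.
\item The entry $-1$ contributes $(-1)\cdot(-1)$ times the determinant of the $E$-matrix with its column $r-1$ replaced by $-x_r\mathbf{1}$.
\end{itemize}
That column of the $E$-matrix is $e_{\mathrm{last}}-x_{r-1}\mathbf{1}$, since every row index $i\ge 2$ satisfies $i+(r-1)\ge r+1$. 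By multilinearity the replaced determinant therefore equals $x_r\,\partial E/\partial x_{r-1}$. This row subtraction is precisely the paper's step. With it and the minor evaluations above, your induction closes.
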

\begin{proof}
Recall that $D_r(\xx)=\det (I-M_r)$, where $M_r$ is defined in Equation \eqref{eq:mat}. We have the initial values $D_1(\xx)=1-x_1=\varphi_1(\xx)$ and $D_2(\xx)=1 - x_2 - x_1 x_2=\varphi_2(\xx)$. Now suppose $r\geq 3$ and let
\[
A=I-M_r=\begin{pmatrix}
1 & 0 & \cdots & 0 & -x_r\\[6pt]
0 & 1 & \cdots & -x_{r-1} & -x_r\\[6pt]
\vdots & \vdots & \ddots & \vdots & \vdots\\[6pt]
0 & -x_2 & \cdots & 1-x_{r-1} & -x_r\\[6pt]
-x_1 & -x_2 & \cdots & -x_{r-1} & 1-x_r
\end{pmatrix}.
\]
Observe that $\det A_{r}^r=\det A_{1,r}^{1,r}=D_{r-2}(x_2,\ldots,x_{r-1})$. Applying the Jacobi-Desnanot identity to $A$, we obtain
\begin{align*}
\det A\cdot \det A_{1,r}^{1,r}=\det A^{1}_{1}\cdot  \det A^{r}_{r}-x_1x_r \left(\det A^{1,r}_{1,r}\right)^2.
\end{align*}
It follows that
\begin{equation}\label{eq:1}
  \det A=\det A_{1}^1 -x_1x_r\det A_{1,r}^{1,r}.
\end{equation}
Now consider the submatrix $A_1^{1}$. Subtract the penultimate row from the last row to obtain
\begin{align*}
  \begin{vmatrix}
1       & 0      & \cdots & -x_{r-1}   & -x_r\\[6pt]
0       & 1  & \cdots & -x_{r-1}   & -x_r\\[6pt]
\vdots  & \vdots & \ddots & \vdots & \vdots\\[6pt]
-x_2   & -x_3   & \cdots & 1-x_{r-1}  & -x_r\\[6pt]
-x_2   & -x_3   & \cdots & -x_{r-1}   & 1-x_r
  \end{vmatrix}=
    \begin{vmatrix}
1       & 0      & \cdots & -x_{r-1}   & -x_r\\[6pt]
0       & 1  & \cdots & -x_{r-1}   & -x_r\\[6pt]
\vdots  & \vdots & \ddots & \vdots & \vdots\\[6pt]
-x_2   & -x_3   & \cdots & 1-x_{r-1}  & -x_r\\[6pt]
0   & 0   & \cdots & -1   & 1
\end{vmatrix}.
\end{align*}
Now expand the determinant by the last row. The cofactor of 1 is clearly $\det A_{1,r}^{1,r}$. On the other hand, one sees that the cofactor of $-1$ equals
\begin{align*}
  -x_r\frac{\partial}{\partial x_{r-1}}\det A_{1,r}^{1,r}.
\end{align*}
It follows that
\begin{equation}\label{eq:2}
  \det A_{1}^1=\det A_{1,r}^{1,r}+  x_r\frac{\partial}{\partial x_{r-1}}\det A_{1,r}^{1,r}.
\end{equation}
From Equations \eqref{eq:1} and \eqref{eq:2}, it follows that
\begin{align*}
  \det A=\left(1-x_1x_r+x_r\frac{\partial}{\partial x_{r-1}}\right)\det A_{1,r}^{1,r}, 
\end{align*}
which is equivalent to
\begin{align*}
  D_r(x_1,\ldots,x_r)=\left(1-x_1x_r+x_r\frac{\partial}{\partial x_{r-1}}\right)D_{r-2}(x_2,\ldots,x_{r-1}).
\end{align*}
By Proposition \ref{prop:frec} it follows that $D_r(\xx)=\varphi_r(\xx)$ for each $r\geq 1$.
\end{proof}
Define
\begin{equation}\label{eq:fnweighted}
f_N(x;a,b) := \sum_{\alpha\in \oc_{\leq N}}(-1)^{\left\lfloor \frac{\ell(\alpha)+1}{2}\right\rfloor}a^{{\rm rem}(\alpha)}b^{\ell(\alpha)}x^{S_{N}(\alpha)},
\end{equation}
where $S_N(\alpha)$ is given by Definition~\ref{def:sr} and ${\rm rem}(\alpha)$ denotes the remainder when $2S_N(\alpha)$ is divided by $N+1$. Note that $f_N(x;1,b)=\varphi_N(bx,bx^2,\ldots,bx^N)$ by Definition \ref{def:varphir}.
\begin{theorem}\label{thm:vkden}
  Let $V_k(x;1,b)$ denote the generating function for vertically fault-free tilings of a $2k\times n$ rectangle with $k\times 1$ tiles that have a central horizontal fault. Then $V_k(x;1,b)=P(x)/Q(x)$ where
    \begin{align*}
Q(x)= f_{k-1}(x;1,b^k)=\sum_{\alpha\in \oc_{\leq k-1}}(-1)^{\left\lfloor \frac{\ell(\alpha)+1}{2}\right\rfloor}b^{k\cdot\ell(\alpha)}x^{S_{k-1}(\alpha)}.
    \end{align*}
\end{theorem}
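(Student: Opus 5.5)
The plan is to chain together results already established in this section. By the Corollary following the coprimeness theorem, $V_k(x;1,b)$ can be written as a quotient of coprime polynomials in $\CC(b)[x]$ whose denominator is $Q_k(x)=\det(I-A(x))$. It therefore suffices to identify $\det(I-A(x))$ with $f_{k-1}(x;1,b^k)$. We then take $P(x)$ to be the numerator $(x+b^{2k}x^k)Q_k(x)+2b^kx^k\widetilde{P}_k(x)$ appearing in that Corollary.

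For the identification, start from Equation~\eqref{eq:determ}:
\[
\det(I-A(x))=D_{k-1}(b^kx,b^kx^2,\ldots,b^kx^{k-1}).
\]
By the theorem $D_r(\xx)=\varphi_r(\xx)$, applied with $r=k-1$ and $x_j=b^kx^j$, this equals $\varphi_{k-1}(b^kx,\ldots,b^kx^{k-1})$. The remark after Equation~\eqref{eq:fnweighted} states that $f_N(x;1,b)=\varphi_N(bx,\ldots,bx^N)$. Substituting $N=k-1$ and replacing $b$ by $b^k$ gives $f_{k-1}(x;1,b^k)$.

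To double-check this step, expand Definition~\ref{def:varphir} directly. Each $\alpha\in\oc_{\leq k-1}$ contributes
\[
x_\alpha=\prod_i b^k x^{\sigma_{k-1}(\alpha_1+\cdots+\alpha_i)}=b^{k\ell(\alpha)}x^{S_{k-1}(\alpha)},
\]
by Definition~\ref{def:sr}. At $a=1$ the factor $a^{{\rm rem}(\alpha)}$ equals $1$, and the sign is $(-1)^{\lfloor(\ell(\alpha)+1)/2\rfloor}$. This yields exactly the displayed sum for $Q(x)$.

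There is no genuine obstacle, since every step is a direct substitution. The only points to watch are the bookkeeping between $A(x)$, which has size $(k-1)\times(k-1)$ and threshold condition $i+j\geq k$, and $M_r$, which has threshold condition $i+j\geq r+1$. With $r=k-1$ these conditions agree. We also note that $b^k$ is substituted for the variable $b$ in $f_{k-1}$, not for $b$ itself.
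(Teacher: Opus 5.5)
Your proposal is correct and follows the paper's own proof: the denominator of $V_k(x;1,b)$ is $\det(I-A(x))$, and this equals $D_{k-1}(b^kx,\ldots,b^kx^{k-1})=\varphi_{k-1}(b^kx,\ldots,b^kx^{k-1})=f_{k-1}(x;1,b^k)$ by Equation~\eqref{eq:determ}, the identity $D_r=\varphi_r$, and the specialization of $f_N$. The paper cites Proposition~\ref{prop:vk} at this step, whereas you cite the coprimeness corollary, which additionally shows the denominator is in lowest terms; that is harmless and is in fact the form used later in Lemma~\ref{lem:finalAden}.
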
    
  \begin{proof}
By Proposition \ref{prop:vk} and Equation \eqref{eq:determ}, the denominator of $V_k(x;1,b)$ is
   \begin{align*}
     \det(I-A(x))&=D_{k-1}(b^k x,b^k x^2,\ldots,b^k x^{k-1})\\
     &=\varphi_{k-1}(b^k x,b^k x^2,\ldots,b^k x^{k-1})\\
     &=f_{k-1}(x;1,b^k).\qedhere 
\end{align*}
\end{proof}

\section{A refinement of the combinatorial formula}\label{sec:refinement}
In this section we obtain an explicit expression for the polynomial $Q(x)$ of Theorem \ref{thm:vkden}. We adopt the convention that the binomial coefficient ${n \choose k}$ is 0 if either $n$ or $k$ is not an integer.
\begin{lemma}\label{lem:oddintok}
  The number of compositions of $n$ into $k$ odd parts equals
  \begin{align*}
    {\frac{n+k}{2}-1 \choose k-1}.
  \end{align*}
\end{lemma}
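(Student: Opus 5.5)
The plan is a direct bijection with compositions into arbitrary positive parts, in the style of stars and bars. First we observe that the count is zero unless $n\equiv k \pmod 2$: a sum of $k$ odd numbers has the parity of $k$. In that case $\frac{n+k}{2}$ fails to be an integer exactly when $n\not\equiv k\pmod 2$, so under the paper's convention the binomial coefficient is also $0$ and the formula holds trivially. We may therefore assume $n\equiv k \pmod 2$ and $n\ge k$. When $n<k$ the upper index is below $k-1$, so the binomial coefficient vanishes, and there are indeed no such compositions; that case is consistent as well.

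Given a composition $(\alpha_1,\ldots,\alpha_k)$ of $n$ with every $\alpha_i$ odd, write $\alpha_i=2\gamma_i-1$ with $\gamma_i\ge 1$. Then $\sum_i \gamma_i=\frac{n+k}{2}$. The map $\alpha\mapsto\gamma$ is a bijection onto the set of compositions of $\frac{n+k}{2}$ into exactly $k$ positive parts, and its inverse is $\gamma_i\mapsto 2\gamma_i-1$. The standard stars-and-bars count says that a positive integer $M$ has $\binom{M-1}{k-1}$ compositions into $k$ positive parts: choose $k-1$ of the $M-1$ gaps between $M$ stars. Taking $M=\frac{n+k}{2}$ gives $\binom{\frac{n+k}{2}-1}{k-1}$.

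There is no genuine obstacle. The only point needing care is the boundary bookkeeping: we must check that the stated convention (a binomial with non-integer entries is $0$) and the usual vanishing of $\binom{M-1}{k-1}$ for $M<k$ match the degenerate cases. The case $n=k=0$ also needs a separate check, since the empty composition gives count $1$ and $\binom{-1}{-1}$ must be interpreted as $1$. We would either state that $k\ge1$ is intended or note this convention.
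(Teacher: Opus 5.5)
Your proof is correct and matches the paper's. Both substitute $\alpha_i=2\gamma_i-1$ to turn the problem into counting compositions of $\frac{n+k}{2}$ into $k$ positive parts, which stars and bars gives as $\binom{\frac{n+k}{2}-1}{k-1}$. Your treatment of the degenerate cases (parity mismatch, $n<k$, and $n=k=0$) is sound and more careful than what the paper writes down.
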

\begin{proof}
  The number of such compositions equals the number of solutions to
  \begin{align*}
    (2b_1-1)+(2b_2-1)+\cdots+(2b_k-1)=n,
  \end{align*}
  in positive integers $b_i$. Since the last equation is equivalent to $\sum_{i=1}^k b_i=(n+k)/2$, the result follows.
\end{proof}
By summing over $n$, we obtain the following corollary. 
\begin{corollary}\label{cor:oddintokleqn} 
  The number of odd compositions of integers $\leq n$ into $k$ parts equals
  \begin{align*}
    {\lfloor \frac{n+k}{2} \rfloor \choose k}.
  \end{align*}
\end{corollary}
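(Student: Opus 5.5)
The plan is to sum Lemma~\ref{lem:oddintok} over all admissible sizes $m$ with $0\le m\le n$. An odd composition of $m$ into $k$ parts can exist only when $m\equiv k \pmod 2$ and $m\ge k$. For such $m$, Lemma~\ref{lem:oddintok} gives the count $\binom{\frac{m+k}{2}-1}{k-1}$. For $m$ of the wrong parity the lemma returns $0$, by the convention that non-integer arguments give $0$. Hence the quantity to evaluate is
\[
\sum_{\substack{0\le m\le n\\ m\equiv k\ (\mathrm{mod}\ 2)}} \binom{\frac{m+k}{2}-1}{k-1}.
\]

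Next I substitute $j=\frac{m+k}{2}$. As $m$ runs over values of the correct parity in $[0,n]$, the integer $j$ runs over $\lceil k/2\rceil\le j\le \lfloor \frac{n+k}{2}\rfloor$. The summand becomes $\binom{j-1}{k-1}$, which vanishes for $j<k$, so I can extend the range down to $j=1$ without changing the sum.

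I then apply the hockey-stick identity $\sum_{j=1}^{M}\binom{j-1}{k-1}=\binom{M}{k}$ with $M=\lfloor\frac{n+k}{2}\rfloor$. This yields the claimed formula.

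The case $k=0$ has to be checked separately, since there the summand $\binom{j-1}{-1}$ is degenerate. The only composition with $0$ parts is the empty one, so the count is $1=\binom{\lfloor n/2\rfloor}{0}$.

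The only delicate point is the bookkeeping of the parity and the upper limit, namely that the largest admissible $j$ is exactly $\lfloor (n+k)/2\rfloor$. I would verify this in the two cases $n\equiv k$ and $n\not\equiv k \pmod 2$.
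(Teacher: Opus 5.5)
Your proposal is correct and follows the paper's own route: the paper obtains the corollary simply ``by summing over $n$'' in Lemma~\ref{lem:oddintok}. You supply the details the paper leaves implicit, namely the parity bookkeeping, the reindexing $j=(m+k)/2$ with upper limit $\lfloor (n+k)/2\rfloor$, the hockey-stick identity, and the separate check of the degenerate case $k=0$.
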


Let $N$ be a positive integer. Given an integer $s$, we would like to count the number of compositions $\alpha\in \oc_{\leq N}$ for which $S_N(\alpha)=s$. It will be convenient to visualize a composition in $\oc_{\leq N}$ by a diagram. For instance, when $N=15$, the composition $\alpha=(1,3,5,3,1)\in \oc_{\leq N}$ is represented by the diagram shown in Figure \ref{fig:zigzag}. From left to right, the labeled nodes correspond to the values $\sigma_N(i)\; (1\leq i\leq N)$ and, for each $1\leq i\leq \ell(\alpha)$, the node $\sigma_N(\alpha_1+\cdots+\alpha_i)$ is enclosed in a box. In this case, the integers $15,2,11,6,9$ are boxed. We have $S_N(\alpha)=15+2+11+6+9=43.$
\begin{figure}[!ht]
  \centering
  \includegraphics[scale=.7]{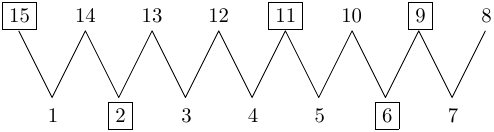} 
  \caption{The diagram of $\alpha=(1,3,5,3,1)\in \oc_{\leq 15}$.}
  \label{fig:zigzag}
\end{figure}

A \emph{slide} in the diagram of a composition is defined as selecting two successive boxes when scanning from left to right (for instance, boxes numbered 2 and 11 or those numbered 11 and 6) and moving them to the left by an equal number of positions. For example, sliding the boxes numbered 2 and 11 by one position to the left results in the composition $(1,1,5,5,1)$ whose diagram is shown in Figure~\ref{fig:slides}.
Observe that if $\tilde{\beta}$ is obtained from $\beta$ by sliding, then $S_N(\tilde{\beta})=S_N(\beta)$. By performing successive slides starting with $\alpha=(1,3,5,3,1)$ one eventually obtains the composition $(1,1,1,1,5)$ shown in Figure~\ref{fig:slides}.
\begin{figure}[!ht]
  \centering
  \includegraphics[scale=.7]{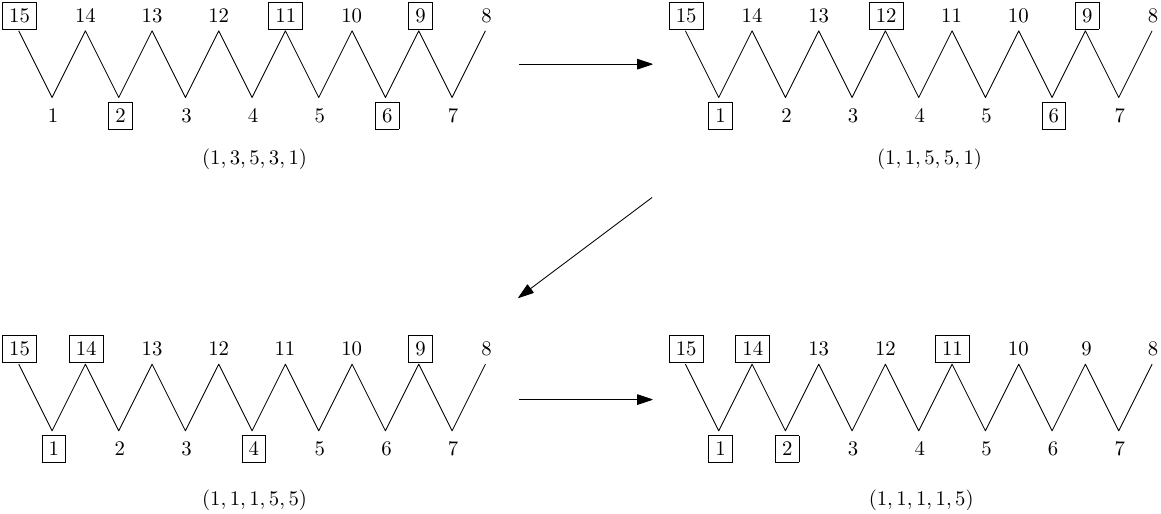} 
  \caption{Successive slides applied to $\alpha=(1,3,5,3,1)\in \oc_{\leq 15}$.}
  \label{fig:slides} 
\end{figure}
Suppose $\alpha=(\alpha_1,\ldots,\alpha_\ell)$. For each $i$ with $2\leq i\leq \ell-1$, sliding the $(i-1)$th and $i$th boxes as far to the left as possible corresponds to the mapping on $\oc_{\leq N}$ defined by
\begin{align*}
  (\alpha_1,\ldots,\alpha_{i-1},\alpha_i,\alpha_{i+1},\ldots,\alpha_\ell)\mapsto   (\alpha_1,\ldots,\alpha_{i-2},1,\alpha_i,\alpha_{i-1}+\alpha_{i+1}-1,\alpha_{i+2},\ldots,\alpha_\ell).
\end{align*}
On the other hand, sliding the $(\ell-1)$th and $\ell$th boxes as far to the left as possible corresponds to
\begin{align*}
  (\alpha_1,\ldots,\alpha_{\ell-2},\alpha_{\ell-1},\alpha_\ell)\mapsto   (\alpha_1,\ldots,\alpha_{\ell-2},1,\alpha_\ell).
\end{align*} 

Note that performing successive slides on any composition $\alpha\in \oc_{\leq N}$ eventually results in a composition of the form $(1,1,\ldots,1,a)$ for some odd integer $a$. We denote this composition by $\tilde{\alpha}$. One can express $\tilde{\alpha}$ explicitly in terms of $\alpha$. If $\alpha=(\alpha_1,\alpha_2,\ldots,\alpha_{2\ell+1})$, then
\begin{align*}
  \tilde{\alpha}=(1^{2\ell},\alpha_1+\alpha_3+\cdots+\alpha_{2\ell+1}-\ell).
\end{align*}
On the other hand, if $\alpha=(\alpha_1,\alpha_2,\ldots,\alpha_{2\ell})$, then
\begin{align*}
  \tilde{\alpha}=(1^{2\ell-1},\alpha_2+\alpha_4+\cdots+\alpha_{2\ell}-\ell+1).
\end{align*}
\begin{proposition}
If $\alpha,\beta\in {\rm OC}_{\leq N}$ are such that $\ell(\alpha)<\ell(\beta)$, then $S_N(\alpha)<S_N(\beta)$. 
\end{proposition}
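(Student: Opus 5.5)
The plan is to use the slide operation to reduce each composition to a canonical representative, and then compare the largest value of $S_N$ among compositions of length $L$ with the smallest value among compositions of length $L+1$.

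\emph{Reduction to canonical forms.} As observed above, a slide preserves both $S_N$ and the number of parts. Every $\alpha\in\oc_{\leq N}$ with $\ell(\alpha)=L\geq 1$ can be slid to $\tilde\alpha=(1^{L-1},c)$ with $c$ odd and $L-1+c\leq N$. Hence
\[
S_N(\alpha)=\Sigma_{L-1}+\sigma(L-1+c),\qquad \Sigma_j:=\sum_{i=1}^{j}\sigma(i).
\]
The position $p=L-1+c$ ranges over the integers in $[L,N]$ having the parity of $L$. It therefore suffices to prove $\max S_N<\min S_N$, where the maximum is over length $L$ and the minimum over length $L+1$, for each $L\geq 0$. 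The general statement then follows by transitivity. For this, note that every length $j\leq\ell(\beta)$ is realized, for instance by $(1^j)$, so we may chain through consecutive lengths. The case $L=0$ is trivial, since $S_N(\emptyset)=0$ while $S_N$ is positive on nonempty compositions.

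\emph{Case $L$ odd.} For length $L$, the position $p$ is odd. On odd positions $\sigma(p)=N+(1-p)/2$ is decreasing in $p$, so the maximum occurs at $p=L$, giving $\Sigma_L$. For length $L+1$, the position $p$ is even. On even positions $\sigma(p)=p/2$ is increasing, so the minimum occurs at $p=L+1$, giving $\Sigma_{L+1}=\Sigma_L+\sigma(L+1)>\Sigma_L$.

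\emph{Case $L$ even.} For length $L$, the position $p$ is even, and the maximum occurs at the largest even $p\leq N$. This gives $\Sigma_{L-1}+\lfloor N/2\rfloor$. For length $L+1$, the position $p$ is odd, and the minimum occurs at the largest odd $p\leq N$. This gives $\Sigma_L+\sigma(p^*)$, where $\sigma(p^*)$ equals $(N+1)/2$ if $N$ is odd and $N/2+1$ if $N$ is even. In both cases $\sigma(p^*)>\lfloor N/2\rfloor$. Since $\Sigma_L=\Sigma_{L-1}+\sigma(L)$ and $\sigma(L)\geq 1$, the required strict inequality follows.

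The only delicate point is this even case, where the extremal positions sit at the far end $N$ rather than adjacent to $L$. It requires checking the parity of $N$ explicitly, and also checking that the length-$(L+1)$ class is nonempty, i.e.\ $L+1\leq N$, whenever the comparison is actually needed.
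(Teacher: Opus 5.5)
Your proof is correct and follows the paper's argument: reduce to consecutive lengths, slide both compositions to canonical forms $(1^{L-1},c)$, and use that $\sigma_N$ is decreasing on odd positions, increasing on even positions, and larger at every odd position than at every even one. Your max/min framing just repackages the paper's direct difference computation, and you are slightly more careful than the paper in treating $\ell(\alpha)=0$ separately and in justifying the reduction to consecutive lengths via the chain of compositions $(1^j)$.
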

\begin{proof}
It suffices to prove the proposition when $\ell(\beta)=\ell(\alpha)+1$. First suppose $\ell(\alpha)=2\ell$ is even. Then $\tilde{\alpha}=(1^{2\ell-1},a)$ and $\tilde{\beta}=(1^{2\ell},b)$ for some odd integers $a,b$. Writing $\sigma$ for $\sigma_N$, we have
  \begin{align*}
    S_N(\beta)-S_N(\alpha)&=    S_N(\tilde{\beta})-S_N(\tilde{\alpha})\\
                          &=\sigma(2\ell)+\sigma(2\ell+b)-\sigma(2\ell-1+a).
  \end{align*}
  Since $\sigma(i)>\sigma(j)$ whenever $i$ is odd and $j$ is even, it follows that $\sigma(2\ell+b)>\sigma(2\ell-1+a)$, implying that $S_N(\beta)>S_N(\alpha)$.

  Now suppose $\ell(\alpha)=2\ell+1$ is odd. Then $\tilde{\alpha}=(1^{2\ell},a)$ and $\tilde{\beta}=(1^{2\ell+1},b)$ for some odd integers $a,b$. In this case,
    \begin{align*}
    S_N(\beta)-S_N(\alpha)&=    S_N(\tilde{\beta})-S_N(\tilde{\alpha})\\
                          &=\sigma(2\ell+1)+\sigma(2\ell+1+b)-\sigma(2\ell+a),
    \end{align*}
which is positive since $\sigma(2\ell+1)\geq \sigma(2\ell+a)$.
\end{proof}
\begin{corollary}\label{cor:comps}
  If $S_N(\alpha)=S_N(\beta)$ for $\alpha,\beta \in {\rm OC}_{\leq N}$, then $\ell(\alpha)=\ell(\beta)$.  
\end{corollary}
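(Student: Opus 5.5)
The statement is the contrapositive of the preceding proposition, so I would derive it directly from that proposition with a trichotomy argument on lengths.

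Let $\alpha,\beta\in{\rm OC}_{\leq N}$ satisfy $S_N(\alpha)=S_N(\beta)$, and suppose for contradiction that $\ell(\alpha)\neq\ell(\beta)$. Swapping the roles of $\alpha$ and $\beta$ if necessary, which is harmless because the hypothesis is symmetric, I may assume $\ell(\alpha)<\ell(\beta)$. The proposition then gives $S_N(\alpha)<S_N(\beta)$, contradicting the hypothesis. Hence $\ell(\alpha)=\ell(\beta)$.

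There is no real obstacle here; the only care needed is at the edge case. The proposition was proved by reducing to consecutive lengths, so I would first confirm it covers $\alpha=\emptyset$. In that case $\tilde\alpha$ is not of the form $(1^{2\ell-1},a)$, so the even-length argument does not literally apply. Instead, $S_N(\emptyset)=0$ and every nonempty $\beta$ has $S_N(\beta)\geq 1$, since $\sigma_N$ takes positive values. This settles the case $\ell(\alpha)=0$ directly.

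For $\ell(\alpha)\geq1$, the chain of strict inequalities $S_N(\alpha)<S_N(\alpha')<\cdots<S_N(\beta)$ needs intermediate compositions of each successive length. I would not rely on that chain; instead I would note that the proposition's proof only uses $\tilde\alpha$ and $\tilde\beta$, so it compares any two compositions whose lengths differ by one. Iterating this gives a strictly increasing chain of attainable values, one for each length.

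Either way, the corollary follows at once from the strict monotonicity of $S_N$ in the length.
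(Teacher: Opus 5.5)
Your proof is correct and is exactly the paper's route: the corollary is an immediate consequence of the preceding proposition (strict increase of $S_N$ with length), obtained by the symmetric trichotomy argument you give. Your side remarks on the proposition are also sound: the case $\alpha=\emptyset$ is trivial because $S_N(\beta)\ge 1$ for nonempty $\beta$, and the intermediate lengths needed for the consecutive-length reduction always occur, for instance as truncations of $\beta$.
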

 Setting $x_i=x^i$ for $1\leq i\leq N$ in the polynomial $\varphi_N(x_1,\ldots,x_N)$ of Definition \ref{def:varphir}, we obtain:
$$ \varphi_N(x,x^2,\ldots,x^N)= \sum_{\alpha\in \oc_{\leq N}}(-1)^{\left\lfloor \frac{\ell(\alpha)+1}{2}\right\rfloor}x^{S_N(\alpha)}.$$
The significance of Corollary \ref{cor:comps} is that no cancellation of terms occurs in the sum above. 

\begin{theorem}\label{thm:refinement}
  For each positive integer $N$, 
$$
\varphi_N(x,x^2,\ldots,x^N) = \sum_{\alpha\in \oc_{\leq N}}(-1)^{\left\lfloor \frac{\ell(\alpha)+1}{2}\right\rfloor}x^{S_N(\alpha)}=\sum_{s=0}^{\frac{N(N+1)}{2}} c_s(N)x^s,
$$
where
\begin{equation*}
  c_s(N)=\begin{cases}  
1 & \text{if }s=0,\\
(-1)^{q_1+1} \dbinom{N - r_1 }{q_1} \dbinom{q_1+r_1-\left\lfloor \frac{N}{2} \right\rfloor - 1 }{q_1} & \text{if } \lfloor\frac{2s}{N+1}  \rfloor \mbox{ is odd} \text{ and } r_1 > \lfloor\frac{N}{2}\rfloor, \\[8pt]
(-1)^{q_0} \dbinom{  r_0-1 }{ q_0-1} \dbinom{\left\lfloor \frac{N}{2} \right\rfloor - r_0 + q_0}{q_0} & \text{if } \lfloor\frac{2s}{N+1}  \rfloor \mbox{ is even} \text{ and } r_0 \leq \lfloor\frac{N}{2}\rfloor, \\
0 & \text{otherwise.}
\end{cases} 
\end{equation*}  
Here $q_j$ and $r_j$ are the quotient and remainder when $s$ is divided by $N+j$ for $j\in \{0,1\}$.
\end{theorem}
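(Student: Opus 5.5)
The plan is to compute $S_N(\alpha)$ in closed form from the length of $\alpha$ and a single parity-restricted partial sum of its parts, and then count the compositions with a given value of $S_N$. By Corollary~\ref{cor:comps}, all $\alpha$ with $S_N(\alpha)=s$ share the same length $L$, hence the same sign $(-1)^{\lfloor (L+1)/2\rfloor}$. So $c_s(N)$ equals this sign times the number of $\alpha\in\oc_{\leq N}$ with $S_N(\alpha)=s$, and no cancellation has to be tracked.

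\textbf{Step 1: a formula for $S_N$.} Let $p_i=\alpha_1+\cdots+\alpha_i$ be the partial sums. Since all parts are odd, $p_i\equiv i \pmod 2$. The definition of $\sigma_N$ then gives $\sigma_N(p_i)=N+(1-p_i)/2$ for $i$ odd and $\sigma_N(p_i)=p_i/2$ for $i$ even. Summing and pairing consecutive partial sums gives the following.
\begin{itemize}
\item If $L=2q$, then $S_N(\alpha)=qN+\tfrac12\bigl(q+E\bigr)$ with $E=\alpha_2+\alpha_4+\cdots+\alpha_{2q}$.
\item If $L=2q+1$, then $S_N(\alpha)=(q+1)N-\tfrac12\bigl(A-(q+1)\bigr)$ with $A=\alpha_1+\alpha_3+\cdots+\alpha_{2q+1}$.
\end{itemize}
Thus $S_N(\alpha)$ depends only on $L$ and one alternating block sum. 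This is consistent with the sliding picture, since slides preserve $S_N$ and the normal form $\tilde\alpha$ records exactly this sum.

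\textbf{Step 2: counting.} Fix $L$ and the relevant block sum. The remaining data split into two independent parts: the parts in the designated positions, which form an odd composition of a fixed total, and the parts in the other positions, which form an odd composition of the complementary length with total at most $N$ minus the fixed sum. Lemma~\ref{lem:oddintok} counts the first and Corollary~\ref{cor:oddintokleqn} counts the second.
\begin{itemize}
\item Even case, $L=2q$. Put $t=(q+E)/2$, so $s=qN+t$. The count is $\binom{t-1}{q-1}\binom{\lfloor (N-E+q)/2\rfloor}{q}$, and this equals $\binom{t-1}{q-1}\binom{\lfloor N/2\rfloor-t+q}{q}$. The constraint $E\le N-q$ forces $1\le t\le \lfloor N/2\rfloor$, so $q=q_0$, $t=r_0$, and the sign is $(-1)^{q_0}$.
\item Odd case, $L=2q+1$. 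Put $u=(A-q-1)/2\ge 0$. Then $s=q(N+1)+(N-q-u)$, so $r_1=N-q-u$ and $u=N-q-r_1$. The count is $\binom{u+q}{q}\binom{\lfloor(N-1)/2\rfloor-u}{q}$, which rewrites as $\binom{N-r_1}{q_1}\binom{q_1+r_1-\lfloor N/2\rfloor-1}{q_1}$. Nonvanishing and $u\ge 0$ force $\lfloor N/2\rfloor< r_1\le N-q_1$, and the sign is $(-1)^{q_1+1}$.
\end{itemize}
The case $s=0$ corresponds to $\alpha=\emptyset$.

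\textbf{Step 3: deciding which case applies.} The delicate point is to show that the parity of $\lfloor 2s/(N+1)\rfloor$ correctly tells the two cases apart. This also explains why the theorem phrases the cases through $\lfloor 2s/(N+1)\rfloor$ rather than through $L$. I would compute this floor in each case.
\begin{itemize}
\item For $s=qN+t$ with $1\le t\le N/2$, write $2s=2q(N+1)+(2t-2q)$. Since $q\le t$ (from $E\ge q$), we have $0\le 2t-2q\le N$, so $\lfloor 2s/(N+1)\rfloor=2q$, which is even.
\item For $s=q(N+1)+r_1$ with $N/2<r_1\le N$, we get $2s=(2q+1)(N+1)+(2r_1-N-1)$, and $0\le 2r_1-N-1<N+1$, so the floor is $2q+1$, which is odd.
\end{itemize}
Conversely, I would check that when the stated range conditions fail, the binomial formulas vanish or no composition exists, which gives the ``otherwise'' case $0$. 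These boundary checks, the equality cases $t=q$ and $r_1=N-q$, and the floor identities under the paper's convention for binomials are the main obstacle. The bijective counting itself is routine once Step~1 is in place.
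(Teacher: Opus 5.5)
Your proposal is correct and takes essentially the same route as the paper. The paper gets the same fact, that $S_N(\alpha)$ depends only on $\ell(\alpha)$ and one alternating block sum, by sliding $\alpha$ to its canonical form $\tilde{\alpha}$; your direct pairing of consecutive partial sums is a cleaner derivation of it. From there the paper counts with Lemma~\ref{lem:oddintok} and Corollary~\ref{cor:oddintokleqn} and identifies the length as $\lfloor 2s/(N+1)\rfloor$, exactly as in your Steps~2 and~3.

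The boundary checks you flag as the main obstacle follow routinely from your own Step~2 and Step~3 computations. For example, $\binom{N-r_1}{q_1}=0$ when $r_1>N-q_1$, and $\binom{r_0-1}{-1}=0$ when $q_0=0$. The paper does not carry these checks out explicitly either.
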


\begin{proof}
 By Corollary \ref{cor:comps}, the absolute value of $c_s(N)$ equals the cardinality of the set $\{\alpha \in \oc_{\leq N}:S_N(\alpha)=s\}.$ Since $S_N(\alpha)=0$ precisely when $\alpha$ is the empty composition, it follows that $c_0(N)=1$. For $s\geq 1$ we consider two cases. Suppose $s=S_N(\alpha)$ for some composition $\alpha$ of odd length, say $2\ell+1$. The diagram of $\tilde{\alpha}$ is shown in Figure \ref{fig:oddterm}.
\begin{figure}
  \centering
  \includegraphics[scale=.7]{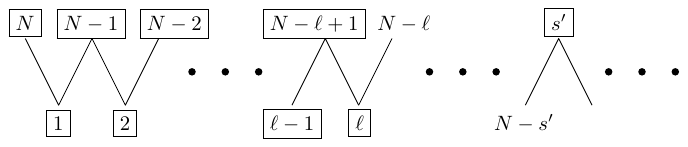}
  \caption{A composition $\tilde{\alpha}\in \oc_{\leq N}$ of odd length.}
  \label{fig:oddterm}
\end{figure}
We must have
\begin{align*}
  s=S_N(\tilde{\alpha})&=N+1+(N-1)+2+\cdots +(N-(\ell-1))+\ell+s'\notag \\
  &=(N+1)\ell+s',
\end{align*}
for some positive integer $s'$ satisfying $\lfloor N/2 \rfloor<s'\leq N-\ell$. Thus $\ell$ and $s'$ are respectively the quotient and remainder when $s$ is divided by $N+1$. Therefore, 
\begin{align}
  \ell(\alpha)=2\ell+1=\left\lfloor \frac{2s}{N+1} \right\rfloor. \label{eq:first} 
\end{align}
Note that $ \tilde{\alpha}=(1^{2\ell},a),$ where $a=2(N-\ell-s')+1$. A composition $(\alpha_1,\alpha_2,\ldots,\alpha_{2\ell+1})\in \oc_{\leq N}$ has canonical form $(1^{2\ell},a)$ precisely when the parts of $\alpha$ satisfy
\begin{align*}
  \alpha_1+\alpha_3+\cdots+\alpha_{2\ell+1}-\ell&=a,\\
  \alpha_2+\alpha_4+\cdots+\alpha_{2\ell}&\leq N-\ell-a.
\end{align*}
By Lemma \ref{lem:oddintok}, the number of solutions to the first equation above is
\begin{align*}
  {\frac{(\ell+a)+(\ell+1)}{2}-1 \choose \ell}={\ell+\frac{a-1}{2}\choose \ell}={N-s'\choose \ell}.
\end{align*}
By Corollary \ref{cor:oddintokleqn}, the number of solutions to the second equation is
\begin{align*}
  {\lfloor \frac{N-a}{2}  \rfloor\choose \ell}={\ell+s'-\lfloor \frac{N}{2} \rfloor-1\choose \ell}.
\end{align*}
As a result, the total number of compositions $\alpha\in \oc_{\leq N}$ with $S_N(\alpha)=s$ is given by
\begin{align*}
  {N-s'\choose \ell} {\ell+s'-\lfloor \frac{N}{2} \rfloor-1\choose \ell}=  {N-r_1\choose q_1} {q_1+r_1-\lfloor \frac{N}{2} \rfloor-1\choose q_1}.
\end{align*}
The sign of $c_s(N)$ equals $(-1)^{\ell+1}= (-1)^{q_1+1}$.
\begin{figure} 
  \centering
  \includegraphics[scale=.7]{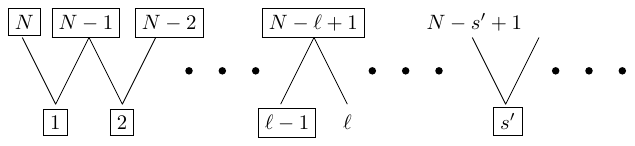}
  \caption{A composition $\tilde{\alpha}\in \oc_{\leq N}$ of even length.}
  \label{fig:eventerm}
\end{figure}

Now suppose $s=S_N(\alpha)$ for some composition $\alpha$ of even length, say $2\ell$. For such a composition $\alpha$, the diagram of $\tilde{\alpha}$ is shown in Figure \ref{fig:eventerm}. We have
\begin{align*}
  s&=S_N(\tilde{\alpha})=N+1+(N-1)+\cdots+(\ell-1)+(N-\ell+1)+s'\notag \\
  &=N\ell+s',
\end{align*}
where $s'$ satisfies $\ell\leq s'\leq \lfloor N/2 \rfloor$. Therefore $\ell$ and $s'$ are respectively the quotient and remainder when $s$ is divided by $N$. We also have
\begin{align}
  \ell(\alpha)=2\ell=\left\lfloor \frac{2s}{N+1} \right\rfloor. \label{eq:second} 
\end{align}
In this case $ \tilde{\alpha}=(1^{2\ell-1},a),$ where $a=2(s'-\ell)+1$. The number of compositions $\alpha=(\alpha_1,\alpha_2,\ldots,\alpha_{2\ell})\in \oc_{\leq N}$ which have canonical form $(1^{2\ell-1},a)$ equals the number of solutions to the system
\begin{align*}
  \alpha_2+\alpha_4+\cdots+\alpha_{2\ell}&=\ell-1+a=2s'-\ell,\\
  \alpha_1+\alpha_3+\cdots+\alpha_{2\ell-1}&\leq N+1-\ell-a=N+\ell-2s'.
\end{align*}
Again, by Lemma \ref{lem:oddintok} and Corollary \ref{cor:oddintokleqn} the number of solutions to the above system is given by
\begin{align*}
&  {s'-1 \choose \ell-1}{\ell-s'+\lfloor N/2 \rfloor\choose \ell}=  {r_0-1 \choose q_0-1}{q_0-r_0+\lfloor N/2 \rfloor\choose q_0}.
\end{align*}
Since the sign of $c_s(N)$ in this case is given by $(-1)^\ell=(-1)^{q_0}$, the theorem follows.
\end{proof}

\begin{corollary}\label{cor:length}
  For each composition $\alpha\in \oc_{\leq N}$, the length of $\alpha$ equals
  \begin{align*}
    \ell(\alpha)=\left\lfloor\frac{2S_N(\alpha)}{N+1}  \right\rfloor.
  \end{align*}
\end{corollary}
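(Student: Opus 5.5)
The plan is to read Corollary~\ref{cor:length} off the two cases in the proof of Theorem~\ref{thm:refinement}, where Equations~\eqref{eq:first} and \eqref{eq:second} already record $\ell(\alpha)=\lfloor 2s/(N+1)\rfloor$ for $s=S_N(\alpha)$. The empty composition gives $S_N(\emptyset)=0$ and length $0$, so we may assume $\alpha\neq\emptyset$. Slides preserve $S_N$, and the explicit formulas for $\tilde\alpha$ show they also preserve length. We may therefore replace $\alpha$ by its canonical form $\tilde\alpha=(1^{\ell(\alpha)-1},a)$ and compute $S_N(\tilde\alpha)$ directly from $\sigma_N$.

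\emph{Odd length} $2\ell+1$. The boxed positions are $1,\dots,2\ell$ together with $2\ell+a$. Since $\sigma(2j-1)+\sigma(2j)=N+1$ for each $j$, the first $2\ell$ positions contribute $(N+1)\ell$. The last position $2\ell+a$ is odd, so it contributes $s'=\sigma(2\ell+a)=N-\ell-\frac{a-1}{2}$.

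For the floor formula we need $\frac{N+1}{2}\le s'<N+1$. The upper bound is clear. For the lower bound, note $2\ell+a\le N$, so $s'\ge N-\frac{N-1}{2}=\frac{N+1}{2}$ after checking the worst case. This gives $2s=2(N+1)\ell+2s'$ with $N+1\le 2s'<2(N+1)$, hence $\lfloor 2s/(N+1)\rfloor=2\ell+1$.

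\emph{Even length} $2\ell\ge 2$. The boxed positions are $1,\dots,2\ell-1$ together with the even position $2\ell-1+a$. The first $2\ell-2$ positions contribute $(N+1)(\ell-1)$. Position $2\ell-1$ contributes $\sigma(2\ell-1)=N-\ell+1$, and the last contributes $\frac{2\ell-1+a}{2}$. Thus
\[
s=(N+1)\ell-\ell+\frac{2\ell-1+a}{2}=(N+1)\ell+\frac{a-1}{2}.
\]
Since $2\ell-1+a\le N$, we get $0\le\frac{a-1}{2}\le\frac{N+1-2\ell}{2}$. In particular $\frac{a-1}{2}<\frac{N+1}{2}$, so $2s=2(N+1)\ell+(a-1)$ with $0\le a-1<N+1$, and $\lfloor 2s/(N+1)\rfloor=2\ell$.

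The only delicate step is the boundary inequality in the odd case, namely that $2s'\ge N+1$, which needs the maximal value $a=N-2\ell$. Then $s'=N-\ell-\frac{N-2\ell-1}{2}=\frac{N+1}{2}$, so the inequality holds, with equality in this extreme case. I would double-check this against the paper's stated bound $\lfloor N/2\rfloor<s'$.
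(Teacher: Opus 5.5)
Your proposal is correct and is essentially the paper's argument. The paper obtains Corollary~\ref{cor:length} directly from Equations~\eqref{eq:first} and \eqref{eq:second}, which come from the same reduction to the canonical form $\tilde\alpha$ and the same evaluation of $S_N(\tilde\alpha)$ in the proof of Theorem~\ref{thm:refinement}. Your explicit checks are the verifications the paper leaves implicit: $N+1\le 2s'<2(N+1)$ in the odd case, with equality exactly when $a=N-2\ell$, and $0\le a-1<N+1$ in the even case. They are also consistent with the paper's bound $\lfloor N/2\rfloor<s'$.
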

\begin{proof}
  This follows from Equations \eqref{eq:first} and \eqref{eq:second}.
\end{proof}
By Theorem \ref{thm:vkden}, we obtain the following result.
\begin{corollary}\label{cor:ffden} 
We have $V_k(x;1,b)=P(x)/f_{k-1}(x;1,b^k)$, where
    \begin{align*}
f_N(x;1,b) = \sum_{s=0}^{\frac{N(N+1)}{2}} c_s(N)\,  b^{\left\lfloor\frac{2s}{N+1}  \right\rfloor} x^s,
      \end{align*}
with $c_s(N)$ as in Theorem \ref{thm:refinement}.
\end{corollary}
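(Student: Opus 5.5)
The statement is Corollary~\ref{cor:ffden}. It should follow directly by combining Theorem~\ref{thm:vkden} with Theorem~\ref{thm:refinement}; the one point needing care is the extra factor $b^{\lfloor 2s/(N+1)\rfloor}$, which comes from Corollary~\ref{cor:length}. I will work with general $N$ and a formal variable $b$, and then specialize $N=k-1$ and $b\mapsto b^k$.

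\textbf{Step 1: grouping the combinatorial sum.} By definition \eqref{eq:fnweighted}, setting $a=1$ gives
\[
f_N(x;1,b)=\sum_{\alpha\in \oc_{\leq N}}(-1)^{\lfloor(\ell(\alpha)+1)/2\rfloor}\, b^{\ell(\alpha)}\, x^{S_N(\alpha)}.
\]
I will group the terms according to the value $s=S_N(\alpha)$. By Corollary~\ref{cor:length}, every $\alpha$ with $S_N(\alpha)=s$ has $\ell(\alpha)=\lfloor 2s/(N+1)\rfloor$. So within one group both the sign and the power of $b$ are constant, and the factor $b^{\ell(\alpha)}$ can be pulled out as $b^{\lfloor 2s/(N+1)\rfloor}$. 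The coefficient of $x^s$ is therefore
\[
b^{\lfloor 2s/(N+1)\rfloor}\sum_{S_N(\alpha)=s}(-1)^{\lfloor(\ell(\alpha)+1)/2\rfloor}.
\]

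\textbf{Step 2: identifying the remaining sum.} This inner sum is exactly the coefficient of $x^s$ in $\varphi_N(x,x^2,\ldots,x^N)$. Theorem~\ref{thm:refinement} identifies that coefficient as $c_s(N)$. Hence
\[
f_N(x;1,b)=\sum_{s} c_s(N)\, b^{\lfloor 2s/(N+1)\rfloor}x^s,
\]
with $s$ ranging from $0$ to $N(N+1)/2$. The upper limit is the maximum of $S_N$, attained at $\alpha=(1^N)$, where $S_N(\alpha)=\sum_i\sigma_N(i)=N(N+1)/2$.

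\textbf{Step 3: specializing to the tiling setting.} Taking $N=k-1$ and replacing $b$ by $b^k$, Theorem~\ref{thm:vkden} gives $V_k(x;1,b)=P(x)/f_{k-1}(x;1,b^k)$. Combined with Step 2, this is the claimed statement. The only potentially delicate point is that the length is constant on each fiber $S_N^{-1}(s)$, and Corollary~\ref{cor:length} (equivalently Corollary~\ref{cor:comps}) already supplies this, so I do not expect a real obstacle.
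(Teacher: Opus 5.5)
Your proposal is correct and matches the paper's argument: the paper obtains the corollary directly from Theorem~\ref{thm:vkden}, with the coefficients $c_s(N)$ supplied by Theorem~\ref{thm:refinement} and the exponent $\lfloor 2s/(N+1)\rfloor$ of $b$ justified by Corollary~\ref{cor:length}, exactly as in your Steps 1--3. One small remark: Corollary~\ref{cor:comps} alone only shows that $\ell(\alpha)$ is constant on each fiber of $S_N$, not what that constant is, so calling it ``equivalent'' overstates it; it is Corollary~\ref{cor:length}, which you do use in Step 1, that the argument actually needs.
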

The values of $f_N(x;1,1)=\varphi_N(x,x^2,\ldots,x^N)$ for $N\leq 6$ are shown in Table \ref{tab:seqfn}.
\begin{table}[h!]
\centering
\begin{tabular}{|c|l|}
\hline
$N$ & $f_N(x;1,1)$ \\ \hline
1 & $1 - x$ \\ 
2 & $1 - x^{2} - x^{3}$ \\ 
3 & $1 - x^{2} - x^{3} - x^{4} + x^{6}$ \\ 
4 & $1 - x^{3} - x^{4} - 2x^{5} - x^{6} + x^{8} + x^{10}$ \\ 
5 & $1 - x^{3} - x^{4} - x^{5} - 2x^{6} - x^{7} + 2x^{9} + 2x^{10} + x^{12} - x^{15}$ \\ 
6 & $1 - x^{4} - x^{5} - x^{6} - 3x^{7} - 2x^{8} - x^{9} + 2x^{11} + 2x^{12} + 3x^{14} + 2x^{15} - x^{18} - x^{21}$ \\ \hline
\end{tabular}
\caption{The first few polynomials $f_N(x;1,1).$}
\label{tab:seqfn}
\end{table}

Write $[N]$ for the set of the first $N$ positive integers.
\begin{definition}\label{def:palpha}
For a composition $\alpha=(\alpha_1,\ldots,\alpha_\ell)\in \oc_{\leq N}$, consider the subset $P_\alpha:=\{\sigma_N(\alpha_1+\cdots+\alpha_j)\}_{1\leq j\leq \ell}\subseteq [N]$.  
\end{definition}
  The following characterization of subsets of the form $P_\alpha$ follows from composition diagrams and will be used later.
\begin{proposition}\label{prop:palpha}
A nonempty subset $S=\{x_1>x_2>\cdots >x_\ell\}$ of positive integers is of the form $P_\alpha$ for some $\alpha \in \oc_{\leq N}$ precisely when $x_i+x_{\ell+1-i}\geq N+1$ and $x_{i+1}+x_{\ell+1-i}<N+1$ for $1\leq i\leq \ell$. Here we set $x_{\ell+1}=0$.
\end{proposition}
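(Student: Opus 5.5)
The plan is to make the composition diagram explicit through the partial sums of $\alpha$. Write $s_j=\alpha_1+\cdots+\alpha_j$. Since all parts of $\alpha$ are odd, $s_j$ is odd for odd $j$ and even for even $j$. Also $1\le s_1<s_2<\cdots<s_\ell\le N$.

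By the definition of $\sigma=\sigma_N$:
\begin{itemize}
\item an odd position $i$ is sent to $N+(1-i)/2\ge (N+1)/2$ (a \emph{large} value);
\item an even position $i$ is sent to $i/2\le N/2$ (a \emph{small} value).
\end{itemize}
Hence $P_\alpha$ splits into large values $L_j=\sigma(s_j)$ for odd $j$, which decrease in $j$, and small values $S_j=\sigma(s_j)$ for even $j$, which increase in $j$. Every large value exceeds every small value. Listing $P_\alpha=\{x_1>\cdots>x_\ell\}$ and putting $m=\lceil \ell/2\rceil$, we get $x_j=L_{2j-1}$ for $1\le j\le m$ and $x_{\ell+1-j}=S_{2j}$ for $1\le j\le\lfloor \ell/2\rfloor$.

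Next I translate the two kinds of inequalities between consecutive partial sums. The inequality $s_{2j-1}<s_{2j}$ reads $2N+1-2L_{2j-1}<2S_{2j}$, that is, $x_j+x_{\ell+1-j}\ge N+1$. The inequality $s_{2j}<s_{2j+1}$ reads $2S_{2j}<2N+1-2L_{2j+1}$, that is, $x_{j+1}+x_{\ell+1-j}<N+1$. The degenerate cases are handled as follows:
\begin{itemize}
\item The bound $s_1\ge 1$ corresponds to $x_1\le N$, which is the $j=0$ (equivalently $i=\ell$) instance of the second family, using $x_{\ell+1}=0$.
\item When $\ell$ is odd, the self-paired instance $i=m$ of the first family, $2x_m\ge N+1$, says that $x_m$ is large.
\item When $\ell$ is even, the self-paired instance $i=m$ of the second family, $2x_{m+1}<N+1$, says that $x_{m+1}$ is small.
\end{itemize}
The remaining indices $i$ in the stated range are either mirror images of these instances or sums of two small values (or of a small value and $0$), so they hold automatically. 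This proves necessity.

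For sufficiency, start from a set satisfying the inequalities. First I will show that $x_1,\dots,x_m$ are all $\ge (N+1)/2$ and $x_{m+1},\dots,x_\ell$ are all $\le N/2$:
\begin{itemize}
\item for $i\le m$ we have $x_{\ell+1-i}\le x_i$, so $x_i+x_{\ell+1-i}\ge N+1$ forces $x_i\ge (N+1)/2$;
\item symmetrically, the second family forces $x_{\ell+1-i}\le N/2$ for $i<\lceil \ell/2\rceil$, with the self-paired instances covering the middle index.
\end{itemize}
I then define positions $s_{2j-1}=2(N-x_j)+1$, which is odd and lies in $[1,N]$ because $x_j\le N$, and $s_{2j}=2x_{\ell+1-j}$, which is even and lies in $[1,N]$ because $x_{\ell+1-j}\le N/2$. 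The same computations as above, read backwards, give $s_1<s_2<\cdots<s_\ell$.

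Finally I set $\alpha_j=s_j-s_{j-1}$ with $s_0=0$. Each $\alpha_j$ is positive, and it is odd because consecutive $s_j$ have opposite parity and $s_1$ is odd. Their total is $s_\ell\le N$, so $\alpha\in\oc_{\leq N}$ and $P_\alpha=S$.

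The main obstacle is the index bookkeeping. One must check carefully that, for each $i$ in $1\le i\le \ell$, the stated inequality is either one of the needed consecutive-position conditions or is automatic, for both parities of $\ell$, including the boundary convention $x_{\ell+1}=0$. I would verify this against the small cases in Table~\ref{tab:seqfn} before writing it out in full.
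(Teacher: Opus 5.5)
Your proof is correct and follows the same route as the paper. Both arguments use the partial sums $s_j$ and the parity split of $\sigma_N$ into large values at odd positions and small values at even positions. Both then translate $s_j<s_{j+1}$ into the two families of inequalities and obtain the converse by reading those equivalences backwards.

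You actually give more than the paper's sketch: the explicit inverses $s_{2j-1}=2(N-x_j)+1$ and $s_{2j}=2x_{\ell+1-j}$, and the boundary instances that encode $s_1\geq 1$ and $s_\ell\leq N$. One cosmetic point: every index that is not self-paired is simply a mirror of a consecutive-position condition. So the cases you hedge about, ``sums of two small values (or a small value and $0$)'', never actually occur, and the $i=\ell$ instance pairs $0$ with the large value $x_1$.
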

\begin{proof}
Let $\alpha\in\oc_{\le N}$ and $s_i=\alpha_1+\cdots+\alpha_i$. Since each $\alpha_i$ is odd, $s_i\equiv i\pmod 2$, so in $P_\alpha$ the values $\sigma_N(s_{2j-1})$ appear in decreasing order and $\sigma_N(s_{2j})$ in increasing order. If $P_\alpha=\{x_1>\cdots>x_\ell\}$, then $x_j=\sigma_N(s_{2j-1})$ and $x_{\ell+1-j}=\sigma_N(s_{2j})$. It is evident from the composition diagram that
\[\sigma_N(s_i)+\sigma_N(s_{i+1})\ge N+1\ (i\text{ odd}),\qquad
\sigma_N(s_i)+\sigma_N(s_{i+1})<N+1\ (i\text{ even})\]
which implies
\[x_j+x_{\ell+1-j}\ge N+1,\qquad x_{j+1}+x_{\ell+1-j}<N+1.\]

The converse follows similarly from the same composition diagram inequalities.
\end{proof}

\section{Special rim-hook tableaux and Hadamard products}\label{sec:hadamard-tilings}
In view of Proposition \ref{prop:Hkvkandhadamard}, our next goal is to derive an explicit formula for the Hadamard product  
\begin{align*}
  H(x):=\frac{1}{p_k(x;a,b)}*\frac{1}{p_k(x;a,b)}, 
\end{align*}
where $p_k(x;a,b)=1-ax-bx^k$. 
\begin{lemma}\label{lem:distinct}
Suppose $k\geq 2$ and $a,b$ are indeterminates over $\QQ$. Write $p_k(x;a,b)=1-ax-bx^k=\prod_{i=1}^k(1-\gamma_i x)$ where the $\gamma_i$ lie in an algebraic closure of $\QQ(a,b)$. Then the numbers $\gamma_i\;(1\leq i\leq k)$ are nonzero and pairwise distinct. Moreover, if $\gamma_i\gamma_j=\gamma_s\gamma_t$ for some integers $1\leq i,j,s,t\leq k$, then either $(i,j)=(s,t)$ or $(i,j)=(t,s)$.
\end{lemma}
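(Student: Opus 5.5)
The plan has two parts: nonvanishing plus distinctness of the $\gamma_i$, and then the multiplicative independence statement, which is the real content.

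\emph{Nonzero and distinct.} Let $q(y)=y^k-ay^{k-1}-b$ be the reversed polynomial, so that the $\gamma_i$ are exactly the roots of $q$. Since $q(0)=-b\neq 0$, no $\gamma_i$ vanishes. For distinctness we check that $q$ is separable. We have $q'(y)=y^{k-2}\bigl(ky-(k-1)a\bigr)$, whose roots are $y=0$ and $y=(k-1)a/k$. Neither is a root of $q$. Indeed, $q(0)=-b\neq 0$, and
\[
q\bigl((k-1)a/k\bigr)=-\frac{(k-1)^{k-1}}{k^k}a^k-b\neq 0,
\]
because $a,b$ are algebraically independent.

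\emph{The product relation.} Suppose $\gamma_i\gamma_j=\gamma_s\gamma_t$ with $\{i,j\}\neq\{s,t\}$ as multisets. We specialize $b$ while keeping $a$ generic, and use the fact that $q$ is irreducible over $\QQ(a,b)$: it is linear in $b$, so Gauss's lemma applies in $\QQ(a,y)[b]$. Hence $\mathrm{Gal}(q)$ is transitive.

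The cleanest route is to show that the Galois group is the full symmetric group $S_k$. I expect this to hold: this is a trinomial with generic coefficients, and one can specialize $a\to 0$, and alternatively use Osada's theorem for $y^k-ay^{k-1}-b$. Granting $S_k$, the argument closes as follows. If some nontrivial relation $\gamma_i\gamma_j=\gamma_s\gamma_t$ held, apply every permutation in $S_k$ to it; we obtain the same relation for every index pattern of the same shape.

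There are two shapes to handle.

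\begin{itemize}
\item Case $i,j,s,t$ all distinct. Applying $S_k$ (for $k\ge 4$) and comparing two of the resulting relations forces ratios like $\gamma_t/\gamma_u=1$, contradicting distinctness.
\item Case $i=j$, giving $\gamma_i^2=\gamma_s\gamma_t$. Permuting $i\leftrightarrow s$ gives $\gamma_s^2=\gamma_i\gamma_t$. Dividing the two relations yields $(\gamma_i/\gamma_s)^3=1$. Then further permutations give all ratios equal to cube roots of unity, so $\gamma_i^3$ would be constant in $i$. That would force $q$ to divide a polynomial $y^3-c$, impossible for $k\ge 4$. The small cases $k=2,3$ are handled separately.
\end{itemize}

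A more robust alternative avoids Galois groups entirely by specializing. Set $a=0$: then the $\gamma_i$ become $\beta\zeta^i$, with $\zeta$ a primitive $k$-th root of unity. A relation $\gamma_i\gamma_j=\gamma_s\gamma_t$ specializes to $\zeta^{i+j}=\zeta^{s+t}$, which does not yet force equality. So next expand to first order in $a$. Writing $\gamma_i=\beta\zeta^i+a/k+O(a^2)$ (the $a$-linear term is the same for every root), the identity $\gamma_i\gamma_j=\gamma_s\gamma_t$ at first order gives
\[
\zeta^i+\zeta^j=\zeta^s+\zeta^t \quad\text{together with}\quad \zeta^{i+j}=\zeta^{s+t}.
\]
Then $\{\zeta^i,\zeta^j\}$ and $\{\zeta^s,\zeta^t\}$ are the root multisets of the same quadratic, so $\{i,j\}=\{s,t\}$.

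To make this rigorous we work in Puiseux/power series in $a$ over $\overline{\QQ(b)}$. A polynomial identity among the $\gamma$'s, holding over the algebraic closure, must hold under this embedding after a suitable labeling of the roots. The main obstacle is exactly this: a relation that holds for one labeling must be transported to the series labeling. This is fine because the relation is asserted for specific indices, and any bijection between the two sets of roots preserves the conclusion $\{i,j\}=\{s,t\}$. I would present the proof via this perturbative argument.
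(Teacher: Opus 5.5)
Your proof is correct, but for the product relation it takes a genuinely different route from the paper. The nonvanishing and separability step is the same as the paper's.

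\textbf{The paper's route.} The paper stays inside $\overline{\QQ(a,b)}$ and eliminates. It sets $\theta=\gamma_s/\gamma_i=\gamma_j/\gamma_t$ and computes $q(\theta\gamma_i)-\theta^kq(\gamma_i)=a\theta^{k-1}(\theta-1)\gamma_i^{k-1}+b(\theta^k-1)$, together with the analogous expression for $\gamma_t$. Subtracting gives $\gamma_i^{k-1}=\gamma_t^{k-1}$ when $\theta\neq 1$. Then $\zeta=\gamma_t/\gamma_i$ satisfies $\zeta^{k-1}=1$, so $0=q(\zeta\gamma_i)=(\zeta-1)\gamma_i^k$.

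\textbf{Your route.} You degenerate to $a=0$. The first-order correction $a/k$ is common to all roots, since $q(\beta\zeta^i+ca)=a\beta^{k-1}\zeta^{-i}(kc-1)+O(a^2)$. Hence the $a^0$ and $a^1$ coefficients of $\gamma_i\gamma_j$ record $\zeta^i\zeta^j$ and $\zeta^i+\zeta^j$, which determine the unordered pair $\{i,j\}$. Your remark that an arbitrary relabeling bijection preserves the conclusion is exactly right.

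\textbf{Two points to tighten.} First, Puiseux series are unnecessary. The roots $\beta\zeta^i$ of $y^k-b$ are simple, so Hensel's lemma lifts them to $k$ distinct roots in $\overline{\QQ(b)}[[a]]$. Second, state explicitly why the relation transfers. Because $q$ splits in $\overline{\QQ(b)}((a))$, the inclusion $\QQ(a,b)\hookrightarrow\overline{\QQ(b)}((a))$ extends to an injective embedding of the splitting field $\QQ(a,b)(\gamma_1,\ldots,\gamma_k)$. That embedding is what carries the relation $\gamma_i\gamma_j=\gamma_s\gamma_t$ over to the series roots.

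\textbf{What each approach buys.} Yours explains conceptually why no multiplicative coincidence survives, and it gives separability for free, since the lifts have distinct constant terms. However, it uses the transcendence of $a$ over $\QQ(b)$ essentially, and it needs Hensel and series machinery. The paper's elimination is a few lines of algebra. Once separability is known, it uses only $ab\neq 0$, so it applies verbatim to any specialization of $a,b$ with $ab\neq 0$ and $q$ separable.

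\textbf{The Galois sketch.} Drop it rather than keep it as an alternative. The $S_k$ claim is unproved, and the small cases $k=2,3$ are left undone. Specializing $a\to 0$ cannot supply $S_k$: it only shows that the Galois group of $y^k-b$ over $\QQ(b)$ embeds in the generic group, which is a lower bound, not the full group.
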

\begin{proof}
Set $q(z)=z^k-az^{k-1}-b$. Then $q(\gamma_r)=0$ for $1\leq r\leq k$. Since $q(0)=-b\neq 0$, every $\gamma_r$ is nonzero. We first show that the roots $\gamma_1,\ldots,\gamma_k$ are pairwise distinct. If $\alpha$ were a repeated root of $q$, then $q(\alpha)=q'(\alpha)=0$. Now
\[
q'(z)=kz^{k-1}-(k-1)az^{k-2}=z^{k-2}(kz-(k-1)a).
\]
Because $\alpha\neq 0$, the equation $q'(\alpha)=0$ forces $\alpha=(k-1)a/k$. Substituting this into $q$ gives
\[
q\Big(\frac{k-1}{k}a\Big)=-\frac{(k-1)^{k-1}}{k^k}a^k-b\neq 0
\]
in $\QQ(a,b)$, since $a$ and $b$ are algebraically independent. Thus $q$ and $q'$ have no common root, so $q$ has no repeated root. Hence the $\gamma_r$ are pairwise distinct.

Now suppose $\gamma_i\gamma_j=\gamma_s\gamma_t$. Since $\gamma_i$ and $\gamma_t$ are nonzero, we may define
\[
\theta=\frac{\gamma_s}{\gamma_i}=\frac{\gamma_j}{\gamma_t}.
\]
Using $q(\gamma_i)=0$ and $q(\gamma_s)=q(\theta\gamma_i)=0$, we obtain
\begin{align*}
0&=q(\theta\gamma_i)-\theta^k q(\gamma_i)\\
&=\big((\theta\gamma_i)^k-a(\theta\gamma_i)^{k-1}-b\big)-\theta^k\big(\gamma_i^k-a\gamma_i^{k-1}-b\big)\\
&=a\theta^{k-1}(\theta-1)\gamma_i^{k-1}+b(\theta^k-1).
\end{align*}
Similarly, using $q(\gamma_t)=0$ and $q(\gamma_j)=q(\theta\gamma_t)=0$,
\[
a\theta^{k-1}(\theta-1)\gamma_t^{k-1}+b(\theta^k-1)=0.
\]
Subtracting these two relations yields
\[
a\theta^{k-1}(\theta-1)(\gamma_i^{k-1}-\gamma_t^{k-1})=0.
\]
If $\theta=1$, then $\gamma_s=\gamma_i$ and $\gamma_j=\gamma_t$. By the distinctness of the roots, this implies $s=i$ and $j=t$, so $(i,j)=(s,t)$.

Assume now that $\theta\neq 1$. Since $a\neq 0$ and $\theta\neq 0$, the displayed identity implies $\gamma_i^{k-1}=\gamma_t^{k-1}$. Set $\zeta=\gamma_t/\gamma_i$. Since $\gamma_i\neq 0$, this is well defined and satisfies $\zeta^{k-1}=1$. Because $q(\gamma_t)=q(\zeta\gamma_i)=0$, we have
\begin{align*}
0=q(\zeta\gamma_i)&=(\zeta\gamma_i)^k-a(\zeta\gamma_i)^{k-1}-b\\
&=\zeta^k\gamma_i^k-a\zeta^{k-1}\gamma_i^{k-1}-b\\
&=\zeta\gamma_i^k-a\gamma_i^{k-1}-b\\
&=(\zeta-1)\gamma_i^k.
\end{align*}
 Since $\gamma_i\neq 0$, it follows that $\zeta=1$, so $\gamma_t=\gamma_i$. Distinctness of the roots then gives $t=i$, and the identity $\gamma_i\gamma_j=\gamma_s\gamma_t$ becomes $\gamma_j=\gamma_s$, hence $j=s$. Therefore $(i,j)=(t,s)$, as required.
\end{proof}
\begin{corollary}
  The numbers $\gamma_i \gamma_j\;(1\leq i\leq j\leq k)$ are all distinct.
\end{corollary}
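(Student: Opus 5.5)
This is a direct consequence of Lemma~\ref{lem:distinct}. We argue by contradiction: suppose $\gamma_i\gamma_j=\gamma_s\gamma_t$ for two index pairs with $1\le i\le j\le k$ and $1\le s\le t\le k$.

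The lemma then says that either $(i,j)=(s,t)$ or $(i,j)=(t,s)$. In the first case the pairs already coincide. In the second case we have $i=t$ and $j=s$. Combining this with the orderings $i\le j$ and $s\le t$ gives
\[
i\le j=s\le t=i,
\]
so $i=j=s=t$, and again $(i,j)=(s,t)$.

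Hence distinct unordered pairs $\{i,j\}$, including the diagonal pairs $i=j$, give distinct products $\gamma_i\gamma_j$. This is exactly the claim that the $\binom{k+1}{2}$ numbers $\gamma_i\gamma_j$ with $1\le i\le j\le k$ are pairwise distinct.

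The only point needing care is that the lemma allows arbitrary indices $1\le i,j,s,t\le k$, with repetitions permitted. This is what covers the case $i=j$, i.e. comparisons involving the squares $\gamma_i^2$. Beyond that observation there is no real obstacle.
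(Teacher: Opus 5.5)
Your proposal is correct and is the intended route: the paper states this corollary without proof as an immediate consequence of Lemma~\ref{lem:distinct}. Your observation that the case $(i,j)=(t,s)$ together with $i\le j$ and $s\le t$ forces $i=j=s=t$ is exactly the small verification the paper leaves implicit.
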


\begin{proposition}\label{prop:denom}
Let $p_k(x;a,b)=1-ax-bx^k=\prod_{i=1}^k(1-\gamma_i x)$, and suppose $H(x)=1/p_k(x;a,b)*1/p_k(x;a,b)=H_1(x)/H_2(x)$ where $H_1(x),H_2(x)$ are coprime polynomials in $\QQ(a,b)[x]$. Then (up to unit multiples) 
  \begin{align*}
H_2(x)= \prod_{i=1}^k(1-\gamma_i^2x)\prod_{1\leq i<j\leq k}(1-\gamma_i\gamma_jx).
  \end{align*}
\end{proposition}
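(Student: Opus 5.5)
Since the $\gamma_i$ are distinct (Lemma \ref{lem:distinct}), the partial fraction expansion is
\[
\frac{1}{p_k(x;a,b)}=\sum_{i=1}^k\frac{A_i}{1-\gamma_i x},\qquad A_i=\prod_{j\neq i}\frac{\gamma_i}{\gamma_i-\gamma_j}\neq 0 .
\]
It gives the $n$th coefficient $\sum_i A_i\gamma_i^n$. Taking the Hadamard product termwise, the coefficient of $x^n$ in $H(x)$ is $\sum_{i,j}A_iA_j(\gamma_i\gamma_j)^n$, so
\[
H(x)=\sum_{i=1}^k\frac{A_i^2}{1-\gamma_i^2x}+\sum_{1\le i<j\le k}\frac{2A_iA_j}{1-\gamma_i\gamma_jx}.
\]
By the Corollary following Lemma \ref{lem:distinct}, the $\binom{k+1}{2}$ numbers $\gamma_i\gamma_j$ ($i\le j$) are pairwise distinct. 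Every coefficient $A_i^2$ and $2A_iA_j$ is nonzero, since each $A_i\neq0$ and we are in characteristic zero.

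Next, put these fractions over the common denominator
\[
D(x)=\prod_{i}(1-\gamma_i^2x)\prod_{i<j}(1-\gamma_i\gamma_jx).
\]
For any fixed pair $(i,j)$ with $i\le j$, multiply $H(x)$ by $1-\gamma_i\gamma_jx$ and evaluate at $x=(\gamma_i\gamma_j)^{-1}$. All other terms vanish because their poles are distinct from this one, and what remains is the nonzero coefficient. So each $(\gamma_i\gamma_j)^{-1}$ is a genuine simple pole of $H$, and in lowest terms over $\overline{\QQ(a,b)}$ the denominator is exactly $D(x)$, up to a unit.

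The remaining step is to descend to $\QQ(a,b)[x]$. The coefficients of $D$ are symmetric functions of the $\gamma_i$, since $D$ is invariant under permuting the roots, so $D\in\QQ(a,b)[x]$. Likewise $H(x)$ has coefficients in $\QQ(a,b)$, so $H_1=HD$ also lies in $\QQ(a,b)[x]$. A reduced fraction over $\QQ(a,b)$ stays reduced over the algebraic closure, because gcds are unchanged under field extension. Hence $H_2$ equals $D$ up to a unit multiple.

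The main obstacle is nonvanishing: the distinctness of the products $\gamma_i\gamma_j$, which is the content of Lemma \ref{lem:distinct}, together with $A_i\neq0$. Both are already available.
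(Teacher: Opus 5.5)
Your proof is correct and follows the same route as the paper: partial fractions over the distinct nonzero roots $\gamma_i$, a termwise Hadamard product, grouping into fractions $\frac{A_i^2}{1-\gamma_i^2x}$ and $\frac{2A_iA_j}{1-\gamma_i\gamma_jx}$ with nonzero coefficients, and Lemma~\ref{lem:distinct} to ensure these poles are pairwise distinct. Your explicit check that each $(\gamma_i\gamma_j)^{-1}$ is a genuine pole, and your descent from $\overline{\QQ(a,b)}$ to $\QQ(a,b)[x]$, spell out steps the paper leaves implicit.
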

\begin{proof}
Let $p_k'(x;a,b)$ denote the derivative of $p_k(x;a,b)$ with respect to $x$. Consider the partial fraction decomposition
\begin{align*}
  \frac{1}{p_k(x;a,b)}=\sum_{i=1}^k \frac{\beta_i}{1-\gamma_ix},
\end{align*}
where $\beta_i=-\gamma_i/p_k'(1/\gamma_i;a,b)\neq 0$ for $1\leq i\leq k.$  We have
\begin{align*}
  \frac{1}{p_k(x;a,b)}*\frac{1}{p_k(x;a,b)}&=\sum_{i=1}^k\sum_{j=1}^k \frac{\beta_i}{1-\gamma_ix}* \frac{\beta_j}{1-\gamma_jx}\\
                               &=\sum_{i=1}^k\sum_{j=1}^k \frac{\beta_i \beta_j}{1-\gamma_i\gamma_jx}\\
  &=\sum_{i=1}^k \frac{\beta_i^2}{1-\gamma_i^2 x}+\sum_{1\leq i<j\leq k}\frac{2\beta_i\beta_j}{1-\gamma_i\gamma_jx}.
\end{align*}
The $\beta_i$ are nonzero and the denominators in the partial fraction decomposition above are all distinct by Lemma \ref{lem:distinct}. It follows that the denominator of $H(x)$ when it is expressed as a quotient of coprime polynomials is given by the stated product up to a constant multiple. 
\end{proof}

\begin{corollary}\label{cor:Hnumdegree} 
If $p_k(x;a,b)=1-ax-bx^k$ and $H(x)=1/p_k(x;a,b)*1/p_k(x;a,b)=H_1(x)/H_2(x)$ with $H_1(x),H_2(x)$ coprime in $\QQ(a,b)[x]$, then  $\deg H_1(x)\leq {k \choose 2}$. 
\end{corollary}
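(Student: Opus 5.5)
We need to bound $\deg H_1 \le \binom{k}{2}$. The plan is to use the partial fraction expansion from the proof of Proposition \ref{prop:denom}:
\[
H(x)=\sum_{i=1}^k \frac{\beta_i^2}{1-\gamma_i^2 x}+\sum_{1\leq i<j\leq k}\frac{2\beta_i\beta_j}{1-\gamma_i\gamma_jx}.
\]
This is a sum of $\binom{k+1}{2}$ proper fractions over the common denominator $H_2(x)$, which has degree $\binom{k+1}{2}$. A priori the numerator $H_1$ has degree at most $\binom{k+1}{2}-1$. We must improve this by $k-1$, which amounts to showing that $H(x)$ vanishes to high order at $x=\infty$.

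Concretely, $\deg H_1\le \deg H_2 - m$ holds exactly when $H(x)=O(x^{-m})$ as $x\to\infty$, i.e. when the expansion of $H$ in powers of $1/x$ starts at $x^{-m}$. Expanding each term gives
\[
\frac{c}{1-\mu x}=-\sum_{n\ge1} c\,\mu^{-n}x^{-n}.
\]
Hence the coefficient of $x^{-n}$ in $H$ is $-\sum_{i,j}\beta_i\beta_j(\gamma_i\gamma_j)^{-n}=-\bigl(\sum_i \beta_i\gamma_i^{-n}\bigr)^2$. So it suffices to show that $\sum_i\beta_i\gamma_i^{-n}=0$ for $1\le n\le k-1$. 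Then the first nonzero coefficient occurs at $n=k$, and with $m=k$ we get $\deg H_1\le \binom{k+1}{2}-k=\binom{k}{2}$, as claimed.

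The key step is this vanishing. Set $u_n=\sum_i\beta_i\gamma_i^{n}$, the coefficients of $1/p_k(x)$. This sequence satisfies the recurrence $u_n=a u_{n-1}+b u_{n-k}$, and the closed form $u_n=\sum_i\beta_i\gamma_i^n$ extends it to all integers $n$, because each $\gamma_i$ is a root of $z^k-az^{k-1}-b$. Running the recurrence backwards from $u_0=1$, $u_{-1}=\cdots=u_{-(k-1)}=0$, I will check these values. Evaluating $1/p_k(x)=\sum_i \beta_i/(1-\gamma_i x)$ at $x=\infty$ gives $0=\sum_i\beta_i\cdot 0$ only through the expansion above, so the direct route is the $1/x$-expansion itself: $1/p_k(x) = -\sum_{n\ge1}u_{-n}x^{-n}$. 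Since $1/p_k(x)=-b^{-1}x^{-k}(1+O(1/x))$, we get $u_{-n}=0$ for $1\le n\le k-1$ and $u_{-k}=b^{-1}\ne0$.

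Then the coefficient of $x^{-n}$ in $H$ is $-u_{-n}^2$, which vanishes for $n<k$ and equals $-b^{-2}\neq 0$ at $n=k$. This gives the bound. The only delicate point is justifying that the $1/x$-expansion of $c/(1-\mu x)$ is a valid Laurent expansion at infinity in $\overline{\QQ(a,b)}((x^{-1}))$. This is routine since all the $\gamma_i$ are nonzero by Lemma \ref{lem:distinct}. Equality in fact holds, but only the inequality is needed.
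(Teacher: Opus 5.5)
Your argument is correct, but it takes a different route from the paper.

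\textbf{The paper's route.} It argues from the tiling side. By Proposition~\ref{prop:Hkvkandhadamard}, $H=1/(1-J)$, where $J$ is a rescaled version of the generating function $V_k$ for vertically fault-free tilings with a central fault. Corollary~\ref{cor:ffden} writes $J=P/Q$, with $Q$ the transfer-matrix determinant $\det(I-A(x))=f_{k-1}(x;1,b^k)$ of degree $\binom{k}{2}$. Then $H=Q/(Q-P)$, so the reduced numerator divides $Q$.

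\textbf{Your route.} You work only with the partial-fraction expansion of Proposition~\ref{prop:denom} and the order of vanishing at $x=\infty$. The coefficient of $x^{-n}$ in $H$ is $-u_{-n}^2$. From $1/p_k(x)=-b^{-1}x^{-k}(1+O(x^{-1}))$ you get $u_{-n}=0$ for $1\le n\le k-1$ and $u_{-k}=b^{-1}$. Hence $H=-b^{-2}x^{-k}+O(x^{-k-1})$, and $\deg H_1=\binom{k+1}{2}-k$.

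\textbf{What each buys.} Your argument is self-contained and elementary.
\begin{itemize}
\item It avoids the transfer matrix and the Jacobi--Desnanot evaluation $D_r=\varphi_r$.
\item It also avoids the substitution $c^k=b$ and the homogeneity rescaling. The paper's short proof needs these but leaves them implicit; they are only carried out later, in Lemma~\ref{lem:finalAden}.
\item It gives the exact degree $\deg H_1=\binom{k}{2}$, not just the upper bound.
\item It uses nothing about $p_k$ beyond $p_k(0)=1$, $\deg p_k=k$, and the distinct-root expansion.
\end{itemize}
The paper's route gives structural information instead: it shows $H_1$ divides $f_{k-1}$. That is what Theorem~\ref{thm:finalgf} eventually needs.

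\textbf{A presentational point.} Your sentence about ``evaluating at $x=\infty$'' and the half-started backward recurrence are garbled. Either drop them and keep only the comparison of the two Laurent expansions of $1/p_k$ in $\overline{\QQ(a,b)}((x^{-1}))$, which is rigorous because the rational function field embeds there. Or finish the recurrence check, which also works: $u_n=a^n$ for $0\le n\le k-1$, so $u_{-j}=(u_{k-j}-au_{k-j-1})/b=0$ for $1\le j\le k-1$ and $u_{-k}=1/b$.
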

\begin{proof}
By Proposition \ref{prop:Hkvkandhadamard} we have $H(x)=(1-J(x))^{-1}$ where $J(x)=P(x)/Q(x)$ is a quotient of (not necessarily coprime) polynomials. By Corollary \ref{cor:ffden}, we have  $\deg Q(x)={k \choose 2}$ which implies the result.
\end{proof}
Our next objective is to derive a formula for the denominator of $H(x)$ above by using ideas from the theory of symmetric functions. We begin with some preliminaries.
\subsection{Almost self-conjugate partitions}
A partition of a nonnegative integer $N$ is a sequence $\lambda=(\lambda_1,\lambda_2,\ldots)$ of nonnegative integers with sum $N$. One often omits trailing zeroes and simply writes $\lambda=(\lambda_1,\lambda_2,\ldots,\lambda_\ell)$ where $\lambda_i\;(1\leq i\leq \ell)$ are positive integers, called the parts of $\lambda$. We write $|\lambda|=N$ or $\lambda \vdash N$ to indicate that $\lambda$ is a partition of $N$. The Young diagram of a partition $\lambda$ is a visual representation of $\lambda$ as a collection of cells arranged in left-justified rows with $\lambda_i$ cells in the $i$th row, denoted ${\rm dg}(\lambda)$. The Young diagram of the partition $\lambda=(7,6,4,2,2,1)$ is shown in Fig. \ref{fig:young}. The conjugate of a partition, denoted $\lambda'$ is the partition whose Young diagram is obtained from that of $\lambda$ by interchanging rows and columns. The Durfee size or rank of $\lambda$ is the largest positive integer $i$ such that $\lambda_i\geq i$, denoted by ${\rm rank}(\lambda)$. If ${\rm rank}(\lambda)=r$, then the  Frobenius coordinates of $\lambda$ are given by the sequences $(\lambda_1-1,\lambda_2-2,\ldots,\lambda_r-r\mid \lambda'_1-1,\lambda'_2-2,\ldots,\lambda'_r-r)$ (see Figure \ref{fig:young}).
\begin{definition}(Dong and Wachs \cite[p. 4]{MR1912799})
An \emph{almost self-conjugate} (ASC) partition is a partition $\lambda$ satisfying $\lambda_i=\lambda'_i+1$ for $1\leq i\leq {\rm rank}(\lambda)$.  
\end{definition}

\begin{figure}[ht]
  \centering 
  \includegraphics[scale=.15]{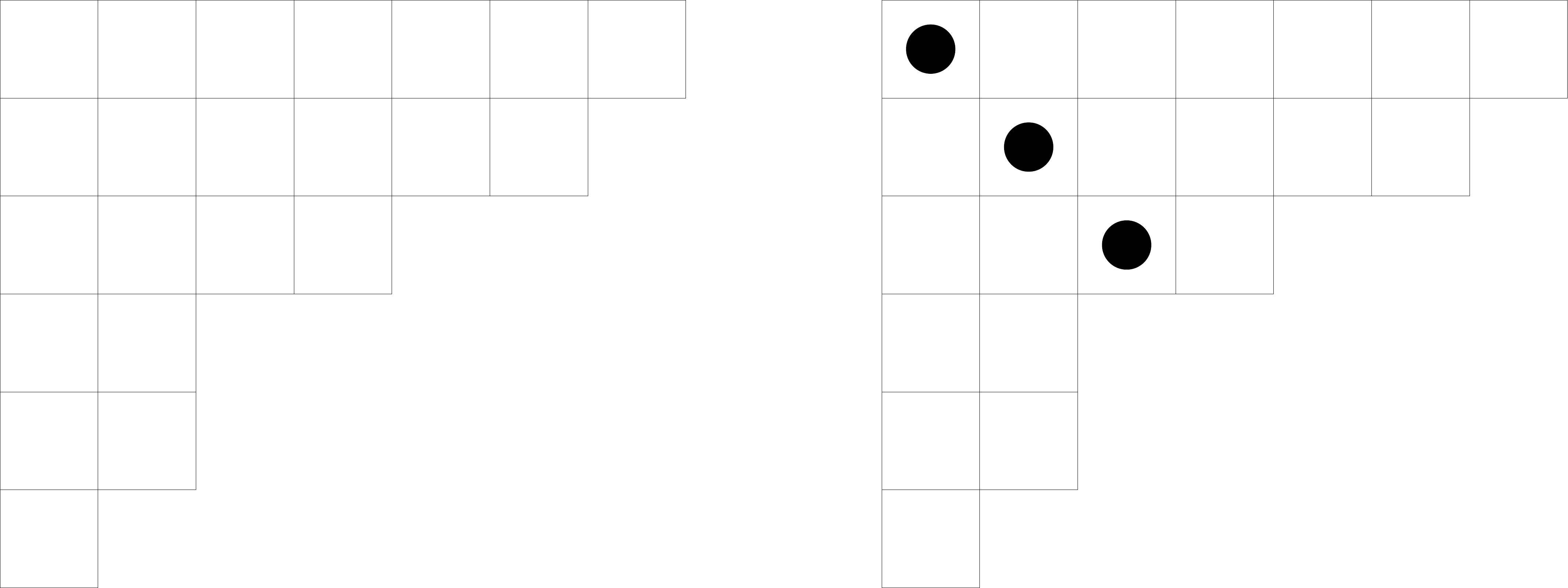}     
  \caption{Left: The Young diagram of $\lambda=(7,6,4,2,2,1)$, an almost self-conjugate (ASC) partition of rank 3. Right: The Frobenius coordinates of $\lambda$ are $(6,4,1\mid 5,3,0)$.}
  \label{fig:young}
\end{figure}
\begin{remark}
    Note that an ASC partition $\lambda$ of rank $r$ is uniquely determined by the first sequence $(\lambda_1-1,\ldots,\lambda_r-r)$ of its Frobenius coordinates, which is a tuple of distinct positive integers. Therefore, $|\lambda|=2\sum_{i=1}^r (\lambda_i-i)$. It follows that, for any nonnegative integer $N$, the map $\lambda \mapsto (\lambda_1-1,\ldots,\lambda_r-r)$ where $r={\rm rank}(\lambda)$ is a bijection from ASC partitions of $2N$ to partitions of $N$ into distinct parts.
  \end{remark}
\subsection{Symmetric functions and plethysms}
Let $\Lambda$ denote the algebra of formal symmetric functions in infinitely many variables $x=(x_1,x_2,\ldots)$. The algebra $\Lambda$ has several natural bases indexed by integer partitions, including the monomial symmetric functions $m_\lambda$, the power sum symmetric functions $p_\lambda$, the elementary symmetric functions $e_\lambda$, the complete homogeneous symmetric functions $h_\lambda$, and the Schur functions $s_\lambda$. 

For a formal power series $F$, denote by $p_k\circ F$ the series obtained by replacing each variable in $F$ with its $k$th power. For any symmetric function $g \in \Lambda$, the plethystic substitution $g\circ F$ is defined by first expressing $g$ as a polynomial in the power sum symmetric functions $p_r$ and then substituting $p_r\circ F$ for each occurrence of $p_r$ in $g$. Plethysms arise in the representation theory of symmetric and general linear groups in the context of composing representations. See Macdonald \cite[pg.\ 135]{MR1354144}  for more on plethysm.

Given the polynomial $p_k(x;a,b)=1-ax-bx^k=\prod_{i=1}^k(1-\gamma_ix)$, we wish to find a combinatorial formula for the product
  \begin{align*}
    \prod_{i=1}^k(1-\gamma_i^2x)\prod_{1\leq i<j\leq k}(1-\gamma_i\gamma_jx),
  \end{align*}
  which appears in the denominator (Proposition \ref{prop:denom}) of the Hadamard product $1/p_k(x;a,b)*1/p_k(x;a,b)$. Write $\gamma^{(2)}$ for the collection of pairwise products $\gamma_i\gamma_j \;(1\leq i<j\leq k)$. If $e_s(\gamma^{(2)})$ denotes the $s$th elementary symmetric function in the variables $\gamma^{(2)}$, then 
  \begin{align*}
    \prod_{1\leq i<j\leq k}(1-\gamma_i\gamma_jx)=\sum_{s=0}^{k \choose 2}(-1)^s e_s(\gamma^{(2)})x^s    =\sum_{s=0}^{k \choose 2}(-1)^s (e_s\circ e_2)(\gamma)x^s.
  \end{align*}
  The last equality follows by first expressing $e_s$ as a polynomial $g(p_1,\ldots,p_s)$ in the power sums $p_1,\ldots,p_s$ together with
  \[
  e_s(\gamma^{(2)})=g\bigl(p_1(\gamma^{(2)}),\ldots,p_s(\gamma^{(2)})\bigr)
  =g\bigl((p_1\circ e_2)(\gamma),\ldots,(p_s\circ e_2)(\gamma)\bigr)=(e_s\circ e_2)(\gamma).
  \]

  \begin{definition}
  A partition $\lambda$ is called \emph{threshold} if $\lambda'_i=\lambda_i+1$ for $1\leq i\leq {\rm rank}(\lambda)$.    
  \end{definition}

  It follows that a partition is threshold if and only if its conjugate is ASC. The following identity of Littlewood \cite[Eq.~(11.9;4)]{Littlewood1940} gives the Schur expansion of this plethystic composition of elementary symmetric functions; see also Klivans and Reiner \cite[Thm.~5.2]{MR2383434} and Colmenarejo, Orellana, Saliola, Schilling and Zabrocki \cite[Thm.~5.7]{MR4780733}.  
  \begin{theorem}\label{thm:threshold}
    We have
    \begin{align*}
      e_s\circ e_2=\sum_{\substack{\lambda \vdash 2s \\ \lambda \text{ threshold}}}s_{\lambda}.
    \end{align*}
  \end{theorem}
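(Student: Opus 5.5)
We would prove Theorem~\ref{thm:threshold} by computing both sides via the dual Cauchy identity and then comparing coefficients in the Schur basis. The generating function of the left side over all $s$ is
\[
\sum_{s\ge 0}(e_s\circ e_2)(x)\,t^s=\prod_{i<j}(1+t\,x_ix_j),
\]
so it suffices to show
\[
\prod_{i<j}(1+x_ix_j)=\sum_{\lambda\ \text{threshold}} s_\lambda(x),
\]
with homogeneity separating the degrees $2s$. The natural tool is the classical Littlewood-type identity $\prod_{i<j}(1+x_ix_j)=\sum_\mu s_\mu$, where $\mu$ ranges over partitions whose Frobenius coordinates have the form $(\alpha_1,\ldots,\alpha_r\mid\alpha_1+1,\ldots,\alpha_r+1)$. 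These are exactly the threshold partitions, since the condition $\lambda'_i=\lambda_i+1$ for $i\le{\rm rank}(\lambda)$ says precisely that the column Frobenius coordinates exceed the row ones by one.

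To make the argument self-contained, we would work with $n$ variables and multiply both sides by the Vandermonde $a_\delta=\prod_{i<j}(x_i-x_j)$. The right side becomes $\sum_\lambda a_{\lambda+\delta}$. The left side becomes
\[
\prod_{i<j}(x_i-x_j)(1+x_ix_j),
\]
which one recognizes as a Pfaffian or determinant of a symplectic/orthogonal type Weyl denominator. Concretely, $\prod_{i<j}(x_i-x_j)(1+x_ix_j)$ is, up to a monomial twist, the type $C$ or $D$ Weyl denominator $\prod_{i<j}(x_i-x_j)(1-x_i^{-1}x_j^{-1})\cdot(\ldots)$ after substituting $x_j\mapsto -x_j^{-1}$ appropriately. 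Expanding it via the Weyl denominator formula for $D_n$ (or $B_n$) as an alternating sum over the hyperoctahedral group, and collecting the terms whose exponent vectors are strictly decreasing, should produce exactly the $a_{\lambda+\delta}$ with $\lambda$ threshold, each with coefficient $+1$.

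The main obstacle is this bookkeeping: the signed expansion must match the Frobenius description, and we must show that no other antisymmetrized monomials survive. Alternatively, we could bypass it with a direct combinatorial proof. We would expand $\prod_{i<j}(1+x_ix_j)$ as the generating function of simple graphs on $[n]$ weighted by degree sequences. By Gale--Ryser/Merris-type results, the degree partitions that occur, with their multiplicities, are governed by dominance, and threshold partitions index the threshold graphs. We would then verify $\sum_{\text{graphs}}x^{\deg}=\sum_{\lambda \text{ threshold}}s_\lambda$ by a sign-reversing involution on pairs (graph, antisymmetrizing permutation), Lindström--Gessel--Viennot style. Given that the paper cites Littlewood, Klivans--Reiner, and Colmenarejo et al., we expect the cleanest route is the Weyl-denominator expansion above. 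We would first test it on $n=2,3$, where $e_1\circ e_2=s_{(1,1)}$ and $e_2\circ e_2=s_{(2,1,1)}$ can be checked directly, and only then carry out the general sign analysis.
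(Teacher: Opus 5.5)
Your reduction is correct, and in substance it is all the paper does. The paper gives no proof of Theorem~\ref{thm:threshold}; it quotes Littlewood's identity (11.9;4). Your two observations are exactly what is needed to read the theorem off that identity: first, $\sum_{s}(e_s\circ e_2)t^s=\prod_{i<j}(1+t\,x_ix_j)$; second, the Frobenius shape $(\alpha_1,\ldots,\alpha_r\mid\alpha_1+1,\ldots,\alpha_r+1)$ is the same as $\lambda'_i=\lambda_i+1$ for $i\le{\rm rank}(\lambda)$.

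What you add is a self-contained proof through the $D_n$ Weyl denominator. This would make the argument independent of the literature and show where threshold shapes come from: they are indexed by the $2^{n-1}$ sign choices. The paper's citation buys only brevity. As written, however, this part is a sketch. The decisive step is asserted (``should produce''), and two details are off:
\begin{itemize}
\item Only type $D_n$ is relevant. The $B_n$ and $C_n$ denominators carry the extra factors $\prod_i(1-x_i)$ or $\prod_i(1-x_i^2)$, and they lead to different identities, such as the one for $\prod_{i\le j}(1-x_ix_j)$.
\item The substitution $x_j\mapsto -x_j^{-1}$ turns $1-x_i^{-1}x_j^{-1}$ into $1-x_ix_j$, not $1+x_ix_j$. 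You would still need $x_i\mapsto\sqrt{-1}\,x_i$, which multiplies $s_\lambda$ by $(\sqrt{-1})^{|\lambda|}$.
\end{itemize}

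The cleanest way to close the argument avoids the twist entirely. Since $(x_i-x_j)(1+x_ix_j)=x_ix_j\bigl((x_i-x_i^{-1})-(x_j-x_j^{-1})\bigr)$, and $x^m+(-1)^mx^{-m}$ is a monic polynomial of degree $m$ in $x-x^{-1}$, a Vandermonde argument gives
\[
\prod_{1\le i<j\le n}(x_i-x_j)(1+x_ix_j)=\det(c_{ij})_{1\le i,j\le n},\qquad c_{ij}=\begin{cases}x_i^{2n-1-j}+(-1)^{n-j}x_i^{j-1}, & j<n,\\ x_i^{n-1}, & j=n.\end{cases}
\]
Expand by multilinearity over the set $S\subseteq[n-1]$ of columns in which you take the low-degree term. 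The resulting exponents are distinct. The number of inversions of the exponent sequence is $\binom{|S|}{2}$ plus, for each $j\in S$, the number of later columns not in $S$ (column $n$ included). This equals $\sum_{j\in S}(n-j)$, so the coefficient $(-1)^{\sum_{j\in S}(n-j)}$ cancels the sorting sign. Hence every term is $+a_{\mu_S}$, and there is no cancellation between different $S$; the ``main obstacle'' you anticipated disappears.

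Now write $[n-1]\setminus S=\{t_1<\cdots<t_r\}$. The partition $\lambda=\mu_S-\delta$ has rank $r$ and $\lambda_i-i=n-1-t_i$. The complementation lemma says the sets $\{\lambda_i+n-i\}_{i\le n}$ and $\{n-1+j-\lambda'_j\}_{j\le n-1}$ partition $\{0,\ldots,2n-2\}$; it gives $\lambda'_i-i=n-t_i$. So the $\mu_S-\delta$ are exactly the threshold partitions with $\ell(\lambda)\le n$, each appearing once. Dividing by $a_\delta$ and letting $n\to\infty$ finishes the proof.

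Your graph/involution alternative is too vague to assess, and Gale--Ryser concerns bipartite rather than simple-graph degree sequences. It is not needed.
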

  The next step is to express the Schur functions in terms of elementary symmetric functions in order to obtain the elementary expansion of $e_s\circ e_2(\gamma)$. The coefficients in this expansion do not seem to be known in general, but we will show that a formula can be given in the current setting. The main idea is that the defining equation $p_k(x;a,b)=1-ax-bx^k=\prod_{i=1}^k(1-\gamma_ix)$ implies that several of the elementary symmetric functions in the $\gamma_i$ vanish and this fact can be exploited together with the fact that the Schur functions in Theorem \ref{thm:threshold} are indexed by threshold partitions. 

  The inverse Kostka numbers $K'_{\lambda\mu}$ are defined as the coefficients in the elementary expansion of the Schur function:
  \begin{equation}\label{eq:invk}
    s_{\lambda'}=\sum_{\mu}K'_{\mu\lambda}e_\mu.
  \end{equation}
We now describe a combinatorial formula for the inverse Kostka numbers. Given integer partitions $\mu,\nu$ such that $\nu_i\leq \mu_i$ for each $i\geq 1$, the skew shape $\mu/\nu$ is the collection of cells obtained from the Young diagram of $\mu$ by removing the squares corresponding to the Young diagram of $\nu$. A \emph{ribbon} is a skew shape that can be obtained by starting with a single cell and adding new cells such that each new cell is immediately to the left or directly below the previously added cell and shares an edge with it. We also allow for the empty ribbon with 0 cells. Equivalently, ribbons are connected skew shapes that do not contain four cells arranged in a $2\times 2$ square. A ribbon containing $k$ cells is called a $k$-ribbon. The sign of a ribbon spanning $r$ rows is defined as $(-1)^{r-1}$ (see Figure \ref{fig:ribbon}). A rim-hook is a ribbon whose cells can be deleted from the Young diagram of a partition to obtain another Young diagram.

  \begin{figure}[ht]
  \centering
  \includegraphics[scale=.15]{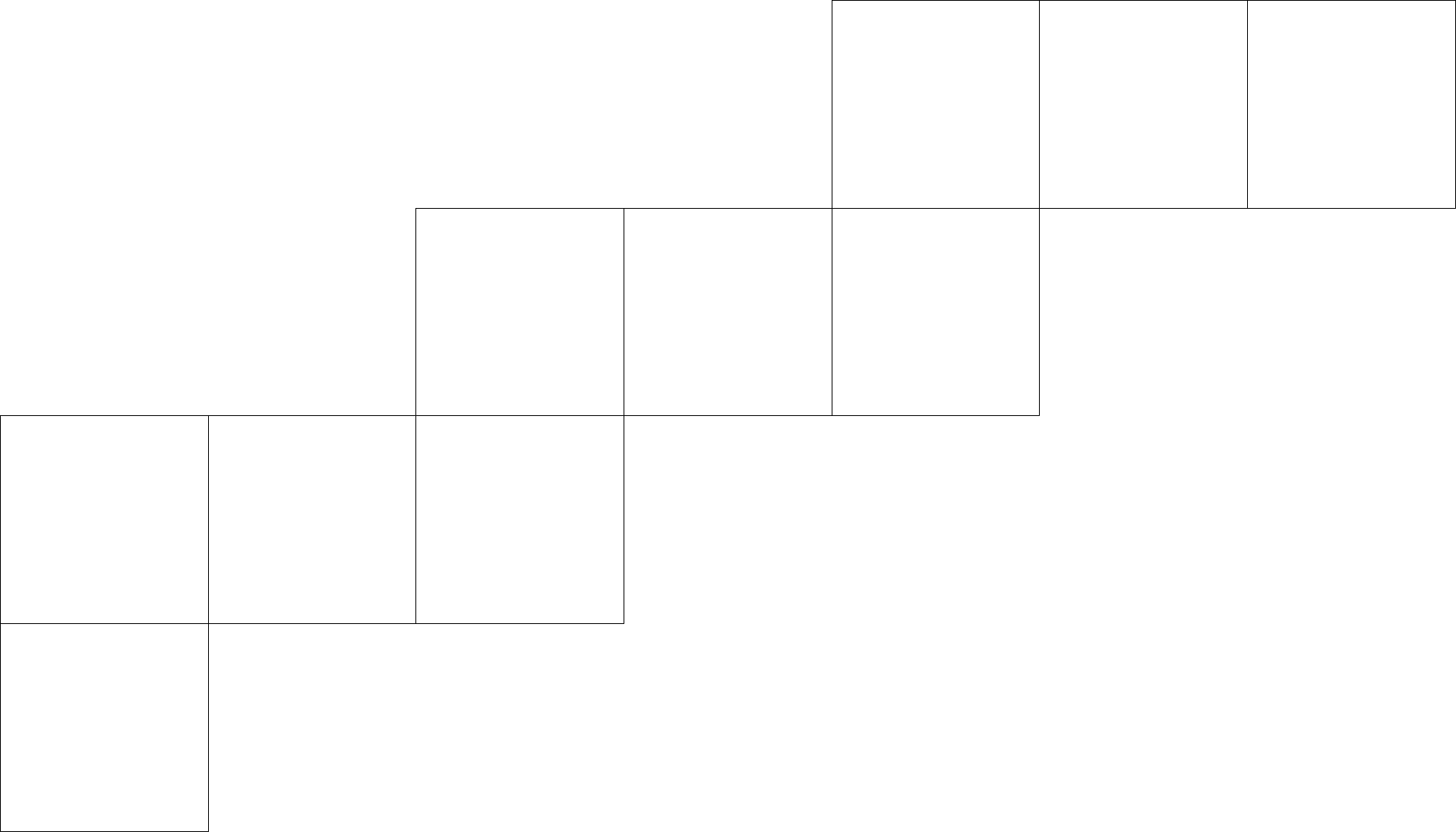}     
  \caption{The skew shape $(7,5,3,1)/(4,2)$ is a 10-ribbon spanning 4 rows and 7 columns. The ribbon has sign $-1$.}  
  \label{fig:ribbon}
\end{figure} 

  \begin{figure}[ht]
  \centering
  \includegraphics[scale=1]{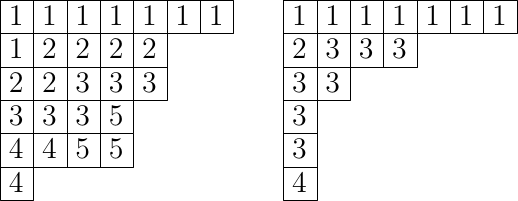}      
  \caption{Left: A rim-hook tableau of shape $(7,5,5,4,4,1)$, content $(8,6,6,3,3)$ and sign $-1$. The ribbon with cells numbered 5 is a rim-hook. Right: A special rim-hook tableau with sign $-1$.}
  \label{fig:rht} 
\end{figure}  
\subsection{Special rim-hook tableaux}\label{subsec:srht}
  \begin{definition} 
  Given partitions $\lambda$ and $\mu$ with $\mu\subseteq \lambda$, and a sequence $\alpha=(\alpha_1,\ldots,\alpha_\ell)$ of nonnegative integers, a \emph{rim-hook tableau} of shape $\lambda/\mu$ and content $\alpha$ is a sequence of partitions 
  $$
  \mu = \nu^0 \subseteq \nu^1 \subseteq \nu^2 \subseteq \cdots \subseteq \nu^\ell = \lambda,
  $$
such that each skew shape $\nu^i / \nu^{i-1}$ forms an $\alpha_i$-ribbon for $1 \leq i \leq \ell$. This tableau is represented visually by drawing the diagram of $\lambda/\mu$ and labeling each cell of the ribbon $\nu^i / \nu^{i-1}$ with the number $i$ as shown in Figure \ref{fig:rht}. 
  \end{definition}
The \textit{sign} of a rim-hook tableau $T$, denoted ${\rm sgn}(T)$ is given by the product of the signs of its ribbons $\nu^i / \nu^{i-1}$. Given a sequence $\alpha$ of nonnegative integers, let ${\rm sort}(\alpha)$ denote the partition obtained by sorting $\alpha$ in weakly decreasing order. 
\begin{definition}
  For partitions $\lambda$ and $\mu$ of $N$, a \emph{special rim-hook tableau} of shape $\lambda$ and \emph{type} $\mu$ is a rim-hook tableau $S$ of shape $\lambda$ and content $\alpha$ satisfying ${\rm sort}(\alpha)=\mu$ with the additional property that each nonzero ribbon in $S$ contains a cell in the first column of the diagram of $\lambda$. Let ${\rm SRHT}(\lambda,\mu)$ denote the collection of all special rim-hook tableaux of shape $\lambda$ and type $\mu$. 
\end{definition}
The following result gives a combinatorial formula for the inverse Kostka numbers (Loehr \cite[Thm.\ 11.64]{MR2777360}).
\begin{theorem}\label{thm:invk}
  If $\lambda,\mu$ are partitions of $N,$ then
  \begin{align*}
    K'_{\mu\lambda}=\sum_{S\in {\rm SRHT}(\lambda,\mu)}{\rm sgn}(S).
  \end{align*}
\end{theorem}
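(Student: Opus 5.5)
The plan is to prove the statement by induction on the number of rows of $\lambda$. We start from the dual Jacobi--Trudi identity and expand the determinant along one line, so that each term of the expansion corresponds to removing a single rim-hook that meets the first column.

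Let $\lambda$ have $\ell$ nonzero parts. The dual Jacobi--Trudi formula gives
\[
s_{\lambda'}=\det\bigl(e_{\lambda_i-i+j}\bigr)_{1\le i,j\le \ell},
\]
with the conventions $e_0=1$ and $e_m=0$ for $m<0$. I will do a Laplace expansion along the first column, that is, the column with $j=1$ and entries $e_{\lambda_i-i+1}$.
\begin{itemize}
\item The entry in row $i$ carries the sign $(-1)^{i-1}$.
\item Its minor is the determinant with row $i$ and column $1$ deleted.
\item After reindexing, that minor is again a dual Jacobi--Trudi determinant, for the partition $\nu$ defined by $\nu_r=\lambda_r$ for $r<i$ and $\nu_r=\lambda_{r+1}-1$ for $i\le r\le \ell-1$.
\end{itemize}
Concretely, for $r\ge i$ the deleted row shifts the indices. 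One must check that the entries $e_{\lambda_{r+1}-(r+1)+(c+1)}$ match $e_{\nu_r-r+c}$ after renumbering columns $c+1\mapsto c$; I will verify this index bookkeeping carefully.

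Next I interpret $\lambda/\nu$ geometrically. Whenever $\nu$ is a partition, $\lambda/\nu$ is the set of cells obtained by removing the last cell of each row $r\ge i$ and sliding the rows up. This is exactly the rim-hook of $\lambda$ that starts at the cell $(\ell,1)$ in the first column and ends in row $i$.
\begin{itemize}
\item Its size is $\sum_{r\ge i}\lambda_r-\sum_{r\ge i}(\lambda_{r+1}-1)=\lambda_i-i+\ell$. This should equal the subscript of the corresponding $e$. If the indexing is off, I will instead expand along the last column, with entries $e_{\lambda_i-i+\ell}$, which matches this count.
\item It spans $\ell-i+1$ rows, so its ribbon sign is $(-1)^{\ell-i}$, which agrees with the cofactor sign.
\item When $\nu$ fails to be a partition, or the subscript is negative, the corresponding term should vanish. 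This happens either because the $e$-entry is $0$, or because the minor has two equal rows.
\end{itemize}
The result is the recursion
\[
s_{\lambda'}=\sum_{h}\operatorname{sgn}(h)\,e_{|h|}\,s_{(\lambda\setminus h)'},
\]
where $h$ ranges over the rim-hooks of $\lambda$ containing the lowest cell of the first column.

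Iterating this recursion removes ribbons one at a time. Each ribbon contains the current bottom cell of the first column, and every cell of the first column is eventually removed, so each nonzero ribbon meets the first column. Reading the removals in reverse order gives a rim-hook tableau of shape $\lambda$. Its content $\alpha$ is the sequence of ribbon sizes, and its sign is the product of the ribbon signs. Because the $e$'s commute, the product of the $e_{|h|}$ along the way is $e_{{\rm sort}(\alpha)}$. Conversely, any special rim-hook tableau is produced exactly once by this process, since its last ribbon must contain the bottom cell of the first column. To match the definition, which allows zero-length ribbons, I will pad with empty ribbons so that there is one ribbon per row. With that padding the correspondence is a bijection onto ${\rm SRHT}(\lambda,\mu)$, and collecting the coefficient of $e_\mu$ gives the formula in the theorem.

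The main obstacle is the index bookkeeping in the Laplace step. I need to choose the expansion line so that the $e$-subscript equals the size of the removed rim-hook. I also need to show that the degenerate terms, where $\nu$ is not a partition, vanish, so that no spurious contributions survive. The remaining steps are then routine.
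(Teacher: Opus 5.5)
The paper does not prove this statement; it cites Loehr. Your argument is the standard self-contained proof by expanding the dual Jacobi--Trudi determinant, and it is correct once two points are settled. Compared with the citation it is longer, but it makes explicit a fact the paper uses silently later, for instance in Corollary~\ref{cor:card1}. That fact is that the nonempty ribbons of a special rim-hook tableau must be added in the order of their lowest first-column cells, so a tableau is determined by its set of ribbons.

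First, commit to the last column. The first column is a different recursion, not a reindexing of yours. Along column $1$, the entry in row $i$ is $e_{\lambda_i-i+1}$ with cofactor sign $(-1)^{i+1}$, and the minor is the Jacobi--Trudi matrix of $(\lambda_1+1,\dots,\lambda_{i-1}+1,\lambda_{i+1},\dots,\lambda_\ell)$.

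Along column $\ell$ everything matches your claims:
\begin{itemize}
\item the entry is $e_{\lambda_i-i+\ell}$ and the cofactor sign is $(-1)^{i+\ell}=(-1)^{\ell-i}$;
\item for $r>i$ the shifted row reads $e_{\lambda_r-r+c}=e_{(\lambda_r-1)-(r-1)+c}$, so the minor is exactly the determinant for your $\nu$.
\end{itemize}
With $\ell=\ell(\lambda)$ there are no degenerate terms at all. The sequence $\nu$ is always a partition; a trailing zero is harmless, since appending zero parts does not change the determinant. Also, $\lambda/\nu$ is always the rim hook from $(\ell,1)$ to $(i,\lambda_i)$, and these $\ell$ hooks are precisely the rim hooks of $\lambda$ through $(\ell,1)$. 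So the obstacle you anticipated does not arise.

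Second, the empty ribbons must be placed canonically. If they could sit anywhere in the chain, as the paper's wording literally allows, one decomposition would be counted several times. For $\lambda=(1,1)$ and $\mu=(2)$, both $\emptyset\subseteq(1,1)\subseteq(1,1)$ and $\emptyset\subseteq\emptyset\subseteq(1,1)$ would appear. But $s_{(2)}=e_1^2-e_2$ shows that $K'_{(2),(1,1)}=-1$, which comes from the single vertical domino.

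You have two clean options.
\begin{itemize}
\item Work with nonempty ribbons only. Your ``last ribbon contains the bottom cell'' argument then gives the bijection directly.
\item Let the $r$th ribbon be the one whose lowest first-column cell is $(r,1)$, and empty otherwise. This is exactly what the unreduced $\ell\times\ell$ expansion produces: at an empty bottom row only the $e_0=1$ term survives, and every other minor acquires a zero row.
\end{itemize}
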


Combining Theorem \ref{thm:invk} with Theorem \ref{thm:threshold} yields the following result.
\begin{theorem} \label{thm:amu}
  For each nonnegative integer $s$,
  \begin{align*}
    e_s\circ e_2=\sum_{\mu\vdash 2s} a_\mu e_\mu,
  \end{align*}
  where
  \begin{align*}
    a_\mu=\sum_{\substack{\lambda \vdash 2s\\\lambda \text{ is ASC}}}\sum_{S\in {\rm SRHT}(\lambda,\mu)}{\rm sgn}(S).
  \end{align*}
\end{theorem}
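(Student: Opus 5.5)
Combine Theorem~\ref{thm:threshold} with Theorem~\ref{thm:invk}, passing between threshold and ASC partitions by conjugation.

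First, apply Theorem~\ref{thm:threshold}:
\[
e_s\circ e_2=\sum_{\substack{\rho\vdash 2s\\ \rho\text{ threshold}}} s_\rho .
\]
A partition $\rho$ is threshold exactly when $\rho'$ is ASC, and conjugation is an involution preserving size. So the substitution $\rho=\lambda'$ is a bijection between threshold partitions of $2s$ and ASC partitions $\lambda$ of $2s$. The sum therefore becomes
\[
\sum_{\substack{\lambda\vdash 2s\\ \lambda\text{ ASC}}} s_{\lambda'} .
\]
This reindexing is the one step needing a check: from the two definitions, $\rho'_i=\rho_i+1$ for $i\le{\rm rank}(\rho)$ translates into $\lambda_i=\lambda'_i+1$ for $i\le {\rm rank}(\lambda)$. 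This uses that conjugation preserves rank.

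Second, expand each $s_{\lambda'}$ by the definition \eqref{eq:invk} of the inverse Kostka numbers, $s_{\lambda'}=\sum_{\mu\vdash 2s}K'_{\mu\lambda}e_\mu$. Then replace $K'_{\mu\lambda}$ by $\sum_{S\in{\rm SRHT}(\lambda,\mu)}{\rm sgn}(S)$, using Theorem~\ref{thm:invk} with $N=2s$. The sums are finite, so we may interchange them and collect the coefficient of each $e_\mu$. This gives $e_s\circ e_2=\sum_{\mu\vdash 2s}a_\mu e_\mu$ with $a_\mu$ exactly as stated.

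For $s=0$ both sides equal $1$, since the only partition is empty and it is vacuously ASC. The only care point is the index convention in \eqref{eq:invk}: it pairs $s_{\lambda'}$ with $K'_{\mu\lambda}$. That convention is what lets the threshold indexing turn into ASC indexing.
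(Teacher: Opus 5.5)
Your proposal is correct and follows the paper's proof: apply Littlewood's threshold expansion, convert threshold partitions to ASC partitions by conjugation, expand via the inverse Kostka numbers in \eqref{eq:invk}, and substitute the special rim-hook tableau formula of Theorem~\ref{thm:invk}. The only difference is cosmetic: the paper writes $s_\lambda=\sum_\mu K'_{\mu\lambda'}e_\mu$ before reindexing over ASC partitions, whereas you reindex first and then expand $s_{\lambda'}$.
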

\begin{proof}
    By Theorem \ref{thm:threshold} and Equation \eqref{eq:invk},
    \begin{align*}
        e_s \circ e_2 
        &= \sum_{\substack{\lambda \vdash 2s \\ \lambda \text{ threshold}}} \sum_{\mu \vdash 2s} K'_{\mu \lambda'} e_{\mu}  \\
        &= \sum_{\mu \vdash 2s} \sum_{\substack{\lambda \vdash 2s\\ \lambda \text{ is ASC}}}  K'_{\mu \lambda} e_{\mu},
    \end{align*}
since   \(\lambda\) is ASC iff \(\lambda'\) is threshold. 
The result now follows from Theorem \ref{thm:invk}.
\end{proof}
We will derive an explicit formula for the coefficients $a_\mu$ in the case where $\mu$ has only parts equal to 1 or $k$. Write $\mu=(k^q1^r)$ if $\mu$ has $q$ parts equal to $k$ and $r$ parts equal to 1. It can be seen that the number of cells in a ribbon equals $n_r+n_c-1$, where $n_r$ and $n_c$ equal the number of rows and columns spanned by the ribbon (see example in Figure~\ref{fig:ribbon}). Given a partition $\lambda$, index the rows and columns of ${\rm dg}(\lambda)$ from top to bottom and left to right respectively by positive integers. The cell appearing in row $i$ and column $j$ of ${\rm dg}(\lambda)$ is assigned coordinates $(i,j)$. The hook of the cell $(i,j)$ consists of the cell $(i,j)$ along with all cells to its right in the same row and all cells below it in the same column. The number of cells in the hook is called the hook length and is given by $h_{(i,j)}=(\lambda_i-j)+(\lambda'_j-i)+1$ (see Figure \ref{fig:hook}).  
    \begin{figure}[ht]
  \centering
  \includegraphics[scale=.6]{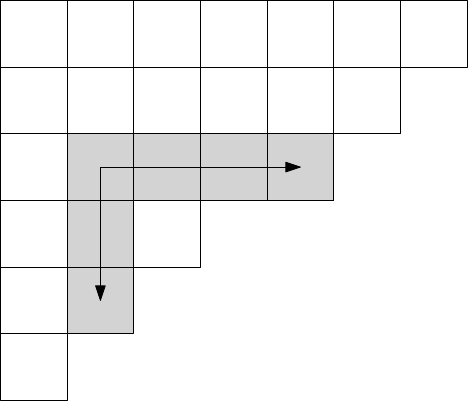}       
  \caption{The hook of the cell $(3,2)$ in the partition $(7,6,5,3,2,1)$. In this case $h_{(3,2)}=6$.}
  \label{fig:hook}   
\end{figure}

\begin{lemma}\label{lem:uniqmu}
Suppose $\lambda$ is an ASC partition and let $\mu=(k^q1^r)$ for some integer $k\geq 2$. If ${\rm SRHT}(\lambda,\mu)$ is nonempty, then ${\rm rank}(\lambda)=q$. Therefore, if ${\rm SRHT}(\lambda,\tilde{\mu})$ is nonempty for some partition $\tilde{\mu}$  all of whose parts are either 1 or $k$, then $\tilde{\mu}$ is uniquely determined by $\lambda$.
\end{lemma}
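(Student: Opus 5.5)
The plan is to track the \emph{contents} $c(i,j)=j-i$ of the cells covered by the ribbons. First I would record three facts about a ribbon $R$ in a special rim-hook tableau $S\in{\rm SRHT}(\lambda,\mu)$.
\begin{enumerate}
\item[(i)] The contents of the cells of a ribbon are pairwise distinct and form an interval $[c_R,\,c_R+|R|-1]$. This holds because a ribbon is a lattice path moving left or down.
\item[(ii)] Since $R$ meets the first column, its lowest-leftmost cell is some $(i_R,1)$. Hence $c_R=1-i_R\le 0$.
\item[(iii)] The ribbons of $S$ partition ${\rm dg}(\lambda)$, and those meeting column $1$ cover it. So the bottom rows $i_R$ of the nonzero ribbons are distinct rows of column $1$.
\end{enumerate}
For $\mu=(k^q1^r)$, a $1$-ribbon is a single cell $(i,1)$, whose content is $1-i\le 0$. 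A $k$-ribbon has content interval $[1-i_R,\,k-i_R]$.

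Next I would exploit the ASC condition through Frobenius coordinates $(a_1,\dots,a_\rho\mid b_1,\dots,b_\rho)$, where $\rho={\rm rank}(\lambda)$ and $a_i=b_i+1\ge 1$.

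\emph{Lower bound $q\ge\rho$.} The cells of content $1$ are exactly $(i,i+1)$ for $1\le i\le\rho$. Each such cell exists because $a_i\ge 1$, and $(\rho+1,\rho+2)$ does not exist. By (i) and (ii), a $1$-ribbon never contains a content-$1$ cell, and a $k$-ribbon contains at most one. Since the ribbons cover every cell, this gives $q\ge\rho$.

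\emph{Upper bound $q\le\rho$.} Here I would use a balance count. The number of cells of positive content is $\sum_i a_i$. The number of cells of nonpositive content is $\sum_i(b_i+1)=\sum_i a_i$. So exactly half of the $2s=qk+r$ cells have positive content.

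On the other hand, a $k$-ribbon with bottom row $i_R$ contributes $(k-i_R)^+$ positive cells and $\min(k,i_R)$ nonpositive cells, while every $1$-ribbon contributes only nonpositive cells. By (iii) the $i_R$ are distinct, and the $1$-ribbons together with the bottom segments of the $k$-ribbons fill column $1$ exactly.

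I would combine this with the nesting $\nu^0\subseteq\nu^1\subseteq\cdots$. The aim is to show that a $k$-ribbon lying entirely in content $\le 0$ (that is, $i_R\ge k$) cannot occur in an ASC shape. If it did, it would force either too many nonpositive cells or a column-$1$ segment incompatible with $\lambda_1=\lambda'_1+1$. Once this is shown, every $k$-ribbon contains a content-$1$ cell, so $q\le\rho$.

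The upper bound is the main obstacle. If the direct balance count does not close, my fallback is induction on $|\lambda|$: remove the last ribbon of $S$ and check that it is a rim hook of $\lambda$ meeting column $1$. Then I would verify how rank and the ASC defect $\lambda_i-\lambda_i'$ change. A $k$-ribbon crossing the diagonal removes one diagonal cell, while any other ribbon (a $1$-ribbon, or a $k$-ribbon not crossing the diagonal) must preserve the near-ASC structure of the smaller shape. This likely requires strengthening the induction hypothesis to all partitions with $\lambda_i-\lambda'_i\in\{0,1\}$ on the Durfee square.

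Finally, the uniqueness statement is immediate. $q={\rm rank}(\lambda)$ is determined by $\lambda$, and then $r=|\lambda|-kq$, so $\tilde\mu=(k^q1^r)$ is determined by $\lambda$.
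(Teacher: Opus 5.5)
Your lower bound $q\ge\rho$ is correct; it is the paper's argument phrased in terms of contents. Your reduction of the upper bound to ruling out $k$-ribbons with $i_R\ge k$ is also sound. But that exclusion is never proved. The sentence ``it would force either too many nonpositive cells or a column-$1$ segment incompatible with $\lambda_1=\lambda'_1+1$'' is an assertion, and the inductive fallback is only a sketch with an unspecified strengthened hypothesis. The balance count alone cannot supply the missing step. Let $K_1$ and $K_2$ be the $k$-ribbons with $i_R<k$ and $i_R\ge k$ respectively. Then $|K_1|=\rho$, and the balance reads $k\rho-2\sum_{R\in K_1}i_R=k|K_2|+r$. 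This equation is satisfiable with $K_2\neq\emptyset$, for instance with $k=10$, $\rho=2$, $i_R\in\{1,2\}$ on $K_1$, $|K_2|=1$, $r=4$. So as written the proposal proves only $q\ge{\rm rank}(\lambda)$.

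The missing idea is to look at the cell of largest content. For $\lambda\neq\emptyset$, the cell $(1,\lambda_1)$ has content $\lambda_1-1\ge 1$, so it lies in a $k$-ribbon. That ribbon's content interval $[1-i_R,\,k-i_R]$ has maximum at most $k-1$, hence $\lambda_1\le k$. The ASC condition then gives $\lambda'_1=\lambda_1-1\le k-1$, so every $k$-ribbon has $i_R\le\lambda'_1<k$. Thus $K_2=\emptyset$, every $k$-ribbon contains exactly one of the $\rho$ content-$1$ cells, and $q\le\rho$.

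This is precisely the paper's key step. The paper first shows $\lambda_1\le k$, because the $k$-ribbon through $(1,\lambda_1)$ must reach column $1$. It then bounds the length of any surplus $k$-ribbon by $\lambda'_1-1=\lambda_1-2<k$. With this observation inserted, your content formulation gives a complete proof. It is also slightly tidier than the paper's, because the interval property makes two of the paper's auxiliary claims unnecessary: that $(i,i+1)$ and $(i,\lambda_i)$ share a ribbon, and that a surplus ribbon avoids the first $d$ rows.
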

\begin{proof}
  Suppose ${\rm rank}(\lambda)=d$ and consider a tableau $S\in {\rm SRHT}(\lambda,\mu)$. Note that $\lambda_d\geq d+1$ and consider the $d$ cells $(i,i+1)\; (1\leq i\leq d)$ immediately to the right of the diagonal in the Young diagram of $\lambda$ (see Figure \ref{fig:q-ribbons}). Since the 1-ribbons in $S$ can only appear in the first column, it can be seen that each of these $d$ cells is contained in a different $k$-ribbon. Therefore $d\leq q$.
    \begin{figure}[ht]
  \centering
  \includegraphics[scale=.15]{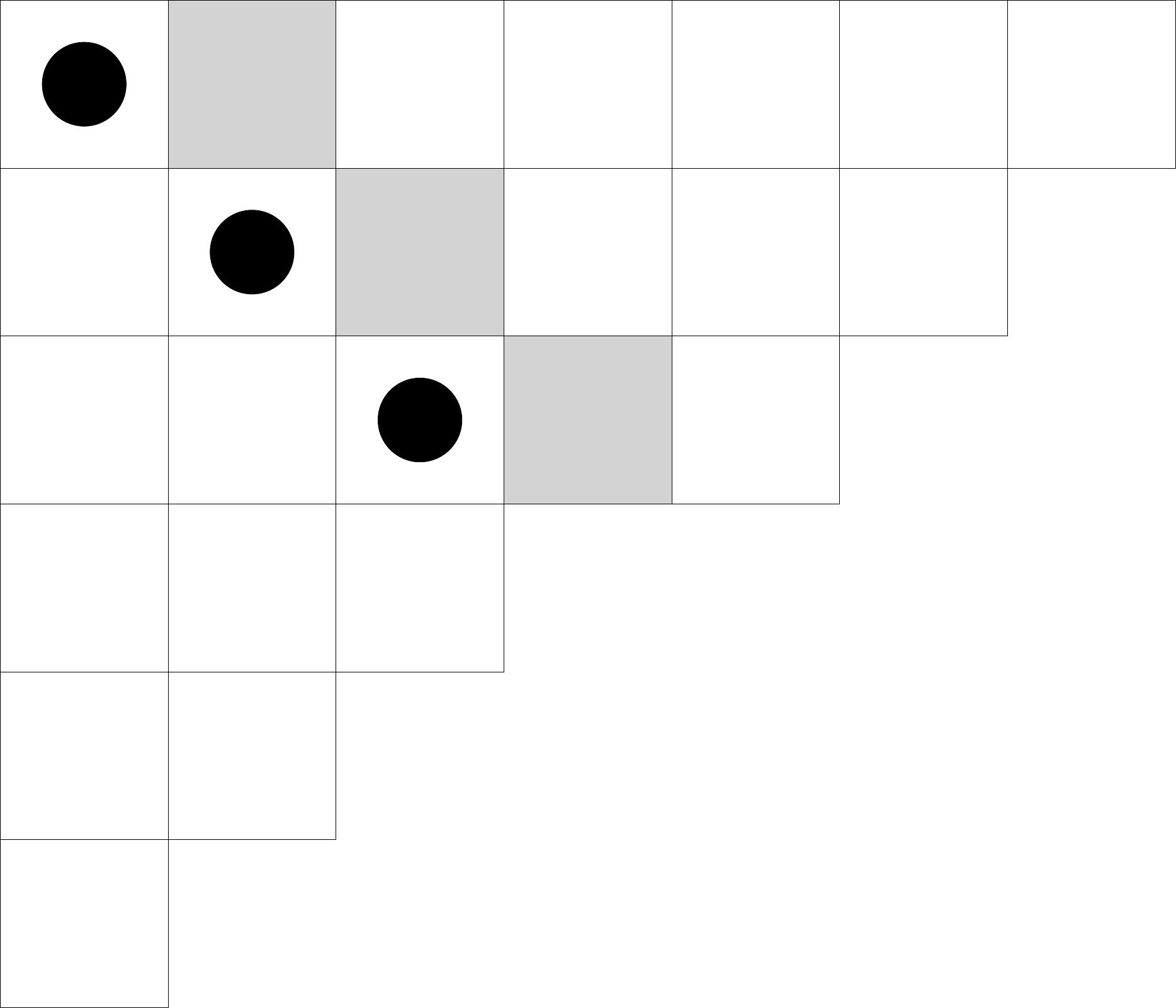}       
  \caption{Each of the shaded cells is contained in a different $k$-ribbon.}
  \label{fig:q-ribbons}  
\end{figure}

If possible, suppose $q>d$. We claim that the $k$-ribbon $R$ containing the cell $(1,2)$ also contains the cell $(1,\lambda_1)$. If not, then the cell $(1,\lambda_1)$ is contained in another $k$-ribbon which necessarily spans more rows and columns than $R$, contradicting the fact that their lengths are equal. Similar reasoning shows that the cells $(i,i+1)$ and $(i,\lambda_i)$ lie on the same $k$-ribbon for $1\leq i\leq d$. Observe that $\lambda_1\leq k$ since the $k$-ribbon containing $(1,\lambda_1)$ also contains a cell in the first column. Since $q>d,$ there exists a $k$-ribbon in $S$ which does not contain any cell in the first $d$ rows of ${\rm dg}(\lambda)$. The length of this $k$-ribbon is then at most $(\lambda'_1-d)+d-1=\lambda'_1-1=\lambda_1-2<k$, a contradiction. It follows that $q=d$.
\end{proof}
The following result gives a necessary and sufficient condition for ${\rm SRHT}(\lambda,\mu)$ to be nonempty.
\begin{theorem}\label{thm:frobconditions}
  Suppose $\lambda$ is an ASC partition of rank $d$ and let $\mu=(k^d1^e)$ be a partition of the same size with $k\geq 2$. If $\lambda$ has Frobenius coordinates $(f_1,\ldots,f_d\mid f_1-1,\ldots,f_d-1)$, then ${\rm SRHT}(\lambda,\mu)$ is nonempty if and only if the following conditions are satisfied.
  \begin{enumerate}
  \item   $f_j+f_{d+1-j}\geq k$ for $1\leq j\leq d$.
  \item $f_{j+1}+f_{d+1-j}< k$ for $1\leq j\leq d$.
  \end{enumerate}
  Here we set $f_{d+1}=0$, so the second condition above becomes $\lambda_1\leq k$ when $j=d$.
\end{theorem}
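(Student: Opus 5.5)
We will prove both directions by analysing the structure of a special rim-hook tableau in ${\rm SRHT}(\lambda,\mu)$, building on the proof of Lemma \ref{lem:uniqmu}.

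\textbf{Structure of a tableau.} Let $S\in{\rm SRHT}(\lambda,\mu)$ and write $R_i$ for the $k$-ribbon containing the cell $(i,i+1)$, for $1\le i\le d$. The argument of Lemma \ref{lem:uniqmu} shows that $R_i$ also contains $(i,\lambda_i)$. Each $R_i$ must reach the first column, since it is a nonzero ribbon of a special rim-hook tableau. As $R_i$ is connected and contains no $2\times 2$ square, it runs along row $i$ from column $\lambda_i$ down to the diagonal region. It then continues as a border strip until it reaches column $1$, and it does so in some row $\rho_i$.

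\textbf{Identifying the rows $\rho_i$.} We claim that the rows $\rho_i$ are forced to be the rows below the Durfee square that end the $d$ leg-arms, taken in reverse order. The reason is that the $k$-ribbons are nested and peeled off the rim in a fixed order. The ribbon from the innermost arm (row $d$) must close using the outermost available leg cell, namely column $1$ at row $\lambda'_1$. After removing the $1$-ribbons in column $1$, the next ribbon pairs arm $d-1$ with the next leg, and so on. Thus arm $j$ (length $f_j$) is paired with leg $d+1-j$ (length $f_{d+1-j}-1$).

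\textbf{Counting cells in each ribbon.} By the relation "cells $=$ rows $+$ columns $-1$", the ribbon $R_j$ has $k$ cells exactly when its arm part and leg part satisfy $f_j+(\text{partial leg})=k$. Here the partial leg is at most $f_{d+1-j}$ once the diagonal cell is accounted for. We would read the two conditions off from this:
\begin{enumerate}
\item Condition (1), $f_j+f_{d+1-j}\ge k$, says that arm $j$ together with the full available leg has at least $k$ cells, so the ribbon can be closed off at exactly $k$.
\item Condition (2), $f_{j+1}+f_{d+1-j}<k$, is needed because the leftover leg cells below the point where $R_j$ ends must be fillable. Either they are $1$-ribbons in column $1$, or the next ribbon $R_{j+1}$ would otherwise be forced to be longer than $k$ or to overlap $R_j$.
\end{enumerate}
The boundary case $j=d$ with $f_{d+1}=0$ reduces to $\lambda_1\le k$, which was already shown in Lemma \ref{lem:uniqmu}.

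\textbf{Sufficiency.} Conversely, assume conditions (1) and (2). We construct $S$ explicitly by placing, for $j=d,d-1,\dots,1$, the hook-shaped ribbon that runs along arm $j$ and down leg $d+1-j$, truncated to exactly $k$ cells. The remaining cells of column $1$ are filled with $1$-ribbons. We then check that at each stage the union of the placed ribbons is a skew shape of a partition. Condition (2) guarantees that the truncation point of ribbon $j$ lies strictly above the end of ribbon $j+1$, which preserves the partition shape.

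\textbf{Expected obstacle.} The main difficulty is the rigorous pairing claim, namely that arm $j$ must pair with leg $d+1-j$. We would prove it by induction, removing the ribbon that contains $(d,d+1)$ first, or equivalently the last-added one. This matches the combinatorics of Proposition \ref{prop:palpha}, whose inequalities have the identical form with $N+1$ replaced by $k$. It suggests transporting that diagrammatic argument directly.
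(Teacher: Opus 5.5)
Your overall plan is the paper's: peel the $k$-ribbons in the order $R_d,R_{d-1},\dots,R_1$, with $1$-ribbons confined to the first column, and read the inequalities off cell counts. But the specific claims you use to carry it out are false, and the mechanism that actually yields (1) and (2) is missing.

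Test the case $k=5$, $\lambda=(5,4,2,1)$, with Frobenius coordinates $(4,2\mid 3,1)$; it satisfies both conditions. Peeling shows the tableau is unique. In order of addition it consists of $R_1=$ the whole first row, the $1$-ribbon $(2,1)$, $R_2=\{(2,2),(2,3),(2,4),(3,1),(3,2)\}$, and the $1$-ribbon $(4,1)$.
\begin{itemize}
\item $R_2$ does not close at row $\lambda'_1=4$, and $R_1$ does not close at row $\lambda'_2=3$. So your identification of the rows $\rho_i$ is wrong.
\item $R_2$ is not a hook. It is a border strip that uses the cell $(3,2)$ of the second leg. Since this tableau is the only one, your sufficiency construction by hook-shaped ribbons running along arm $j$ and down leg $d+1-j$ cannot succeed.
\item Your account of (2) is also off. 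Counting rows plus columns minus one gives $\rho_j=k-f_j$ directly. Hence $\rho_j<\rho_{j+1}$ holds automatically because $f_j>f_{j+1}$; it is not what (2) guarantees.
\end{itemize}

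What is missing is the computation that drives the paper's proof. At stage $i$, the ribbons $R_d,\dots,R_{d+2-i}$ and the $1$-ribbons below them have already been removed. The current shape then has first two columns of lengths $f_i$ and $f_{i+1}+1$. The ribbon $R_{d+1-i}$ is forced to be the rim hook of the hook of $(d+1-i,1)$, running from $(d+1-i,\lambda_{d+1-i})$ to the bottom of the first column. This happens only after $f_i+f_{d+1-i}-k$ single cells have been stripped from that column.
\begin{itemize}
\item Nonnegativity of this number is exactly (1).
\item These cells must be removable one at a time, so the stripped first column, of length $k-f_{d+1-i}$, must be at least $f_{i+1}+1$. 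This is exactly (2).
\end{itemize}
Removing such a rim hook turns the column lengths $\ell_1,\ell_2,\ell_3,\dots$ in the columns it crosses into $\ell_2-1,\ell_3-1,\dots$. This reproduces the pair $f_{i+1},\,f_{i+2}+1$ for the next stage and closes the induction. Because every step is forced, the same computation also gives sufficiency. Your bound ``partial leg at most $f_{d+1-j}$'' is numerically right, but it is this column shift that justifies it, not accounting for a diagonal cell.
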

\begin{proof}
  Observe that ${\rm SRHT}(\lambda,\mu)$ is nonempty if and only if it is possible to start with ${\rm dg}(\lambda)$ and remove a sequence of rim-hooks, each of which is either a 1-ribbon or a $k$-ribbon containing a cell in the first column, and eventually obtain the empty partition. We have already seen that the $k$-ribbons removed in this process have their (unique) rightmost cells at the end of the first $d$ rows of ${\rm dg}(\lambda)$. Let $c_i$ denote the last cell in row $i$, for $1\leq i\leq d.$ Consider the longest rim-hook $R_d$ starting at the cell $c_d$; this rim-hook necessarily ends in the bottom cell of the first column of ${\rm dg}(\lambda)$ and has at least $k$ cells. Since this rim-hook has length $h_{(d,1)}$, it follows that $h_{(d,1)}\geq k$ which is equivalent to $f_1+f_d\geq k$. Further, if $h_{(d,1)}>k$, then it should be possible to remove $h_{(d,1)}-k$ 1-ribbons from the first column. This is possible if and only if $\lambda'_1-\lambda'_2\geq h_{(d,1)}-k=\lambda_d+\lambda'_1-d-k$. The last condition is equivalent to $f_2+f_d<k$.

  Note that removing $h_{(d,1)}-k$ 1-ribbons from the first column followed by removing the $k$-ribbon containing $c_d$ from ${\rm dg}(\lambda)$ is equivalent to removing the rim-hook $R_d$ from ${\rm dg}(\lambda)$. Therefore, the resulting partition $\lambda^{(2)}$ has initial column lengths $\lambda'_2-1,\lambda'_3-1,\ldots$ (in general, if the partition has column lengths $\ell_1,\ell_2,\ell_3,\ldots$, then removing the longest rim-hook in this manner results in the initial column lengths being $\ell_2-1,\ell_3-1,\ldots$). Consider the longest rim-hook $R_{d-1}$ starting at the cell $c_{d-1}$ in ${\rm dg}(\lambda^{(2)})$. Then this rim-hook has length $\lambda'_2-1-(d-1)+\lambda_{d-1}=f_2+f_{d-1}$ which is necessarily at least $k$. By the same argument above, it should be possible to remove $f_2+f_{d-1}-k$ 1-ribbons from the first column of ${\rm dg}(\lambda^{(2)})$. Therefore $\lambda'_2-\lambda'_3\geq f_2+f_{d-1}-k$, which is equivalent to $f_3+f_{d-1}<k$.

  Removing $f_2+f_{d-1}-k$ 1-ribbons from the first column of ${\rm dg}(\lambda^{(2)})$ followed by removing the $k$-ribbon containing $c_{d-1}$ is equivalent to removing the rim-hook $R_{d-1}$ from ${\rm dg}(\lambda^{(2)})$. The resulting partition $\lambda^{(3)}$ has initial column lengths $\lambda'_3-2,\lambda'_4-2,\ldots$.

  In general, after the longest rim-hooks $R_d,R_{d-1},\ldots,R_{d-j+1}$ (with $R_i$ beginning at $c_i$) have been removed for some $j<d$, the resulting partition $\lambda^{(j+1)}$ has initial column lengths $\lambda'_{j+1}-j,\lambda'_{j+2}-j,\ldots$. Therefore, the longest rim-hook starting at the cell $c_{d-j}$ has length $\lambda'_{j+1}-j-(d-j)+\lambda_{d-j}=f_{j+1}+f_{d-j}$ which is necessarily at least $k$. On the other hand, it should be possible to remove $f_{j+1}+f_{d-j}-k$ 1-ribbons from the first column of $\lambda^{(j+1)}$ which yields the condition $\lambda'_{j+1}-\lambda'_{j+2}\geq f_{j+1}+f_{d-j}-k$ which is equivalent to $f_{j+2}+f_{d-j}<k$. From the preceding discussion it is clear that the inequalities derived are necessary as well as sufficient for ${\rm SRHT}(\lambda,\mu)$ to be nonempty and the result follows.
\end{proof}

\begin{corollary}\label{cor:card1}
 Suppose $\lambda$ is an ASC partition of rank $d$ and let $\mu=(k^d1^e)$ be a partition of the same size with $k\geq 2$ such that ${\rm SRHT}(\lambda,\mu)$ is nonempty. Then $|{\rm SRHT}(\lambda,\mu)|=1$.
  \end{corollary}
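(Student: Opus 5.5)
Since $\mathrm{SRHT}(\lambda,\mu)$ is assumed nonempty, it suffices to show that any $S\in\mathrm{SRHT}(\lambda,\mu)$ is forced: the underlying chain of partitions $\emptyset=\nu^0\subseteq\cdots\subseteq\nu^\ell=\lambda$ is uniquely determined. We read a special rim-hook tableau backwards, as a sequence of removals of rim-hooks from ${\rm dg}(\lambda)$. Each removal is either a $1$-ribbon (which, being special, must be the bottom cell of the first column) or a $k$-ribbon meeting the first column. The point to keep in view is that the content $\alpha$ is part of the data of the tableau, but its sorting is fixed to $\mu=(k^d1^e)$. We must therefore show that both the order in which $1$'s and $k$'s occur in $\alpha$ and the ribbons themselves are determined by $\lambda$.

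First, the ribbon data. From the proof of Lemma~\ref{lem:uniqmu}, the $d$ $k$-ribbons are in bijection with the first $d$ rows: the $k$-ribbon through $(i,i+1)$ also contains the last cell $c_i=(i,\lambda_i)$. A ribbon meeting the first column and starting at $c_i$ is determined by its two endpoints, since it is the rim path from $c_i$ down to a cell of column $1$. Given its length $k$ and its start $c_i$, the ribbon is therefore determined by the current shape.

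Second, the order of removals. For a $k$-ribbon starting at $c_i$ to be removable, the cells of rows $i+1,\dots,d$ lying beyond it must already be gone, since those rows extend past column $i+1$. This forces the removals to occur in the order $c_d,c_{d-1},\dots,c_1$, as in the proof of Theorem~\ref{thm:frobconditions}.

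Third, the interleaving of the $1$-ribbons. Between successive $k$-ribbon removals only first-column cells can be removed. The argument in Theorem~\ref{thm:frobconditions} shows that the $k$-ribbon starting at $c_{d-j}$ is removable exactly when the first column has been shortened so that the rim path from $c_{d-j}$ to the bottom of column $1$ has length $k$. In particular, the longest rim-hook from $c_{d-j}$, of length $f_{j+1}+f_{d-j}$, must be reduced by exactly $f_{j+1}+f_{d-j}-k$ first-column cells immediately before that removal. Fewer removals leave the path too long. More removals would make the rim path from $c_{d-j}$ to the first column shorter than $k$, so no $k$-ribbon from $c_{d-j}$ could reach column $1$, which is fatal since later steps never restore cells. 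The counts of interleaved $1$-ribbons are thus all forced, the leftover $1$-ribbons at the end (after $c_1$'s ribbon) are the remaining first-column cells, and the $1$-ribbons removed between two $k$-removals are consecutive bottom cells, hence determined.

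Consequently the entire chain $\nu^\bullet$ and the content sequence $\alpha$ are uniquely determined, so $|\mathrm{SRHT}(\lambda,\mu)|\le 1$, and nonemptiness gives equality. The main obstacle I expect is the third step: ruling out that a $k$-ribbon from $c_{d-j}$ could be removed after a \emph{different} number of first-column deletions. To handle it, I would first make precise that the length of the rim path from $c_{d-j}$ to the bottom of the first column decreases by exactly one per $1$-ribbon removed and is otherwise unchanged, so it equals $k$ for exactly one number of deletions.
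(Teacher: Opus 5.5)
Your proposal is correct and follows essentially the same route as the paper. Both arguments read the tableau as a forced sequence of removals: the $k$-ribbons through $c_d,c_{d-1},\dots,c_1$ come in that order, and before each one exactly $L-k$ first-column cells are deleted, where $L$ is the current longest special rim-hook from $c_i$. Your explicit observation that this hook length drops by exactly one per first-column deletion is what makes the count forced, a point the paper's terser proof leaves implicit.
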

  \begin{proof}
Consider the successive removal of ribbons of length $1$ or $k$ from ${\rm dg}(\lambda)$. The argument in the proof of Theorem \ref{thm:frobconditions} shows that, at each stage, if the relevant longest rim-hook $R_i$ has length $L>k$, then one must first remove $L-k$ $1$-ribbons from the bottom of the first column before a $k$-ribbon through $c_i$ can be removed. If $L=k$ and both removals preserve partition shape, only the $k$-ribbon removal can extend to a full special rim-hook tableau of type $\mu$; removing a $1$-ribbon first would leave no later $k$-ribbon through $c_i$. This constraint forces a deterministic decomposition sequence where the shape of $\lambda$ strictly dictates the length of the ribbon removed at each stage. Therefore the resulting tableau is unique. 
  \end{proof} 

\begin{definition}
  Let ${\rm ASC}_{1,k}(N)$ denote the set of all ASC partitions $\lambda$ of $N$ such that there exists a special rim-hook tableau of shape $\lambda$ and type $\mu=(k^q1^r)$ for some integers $q,r.$
\end{definition}
Recall Definition \ref{def:palpha} of $P_\alpha$. Write $r(\lambda)$ for the rank of $\lambda$.
\begin{proposition}\label{prop:bijection}
  For each nonnegative integer $s$ there is a bijection between ${\rm ASC}_{1,k}(2s)$ and the set of compositions $\alpha\in \oc_{\leq k-1}$ satisfying $S_{k-1}(\alpha)=s$.
\end{proposition}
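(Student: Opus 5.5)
The plan is to compose two characterizations that are already in place: Theorem~\ref{thm:frobconditions} for ASC partitions, and Proposition~\ref{prop:palpha} for the sets $P_\alpha$. Set $N=k-1$, so that $N+1=k$.

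By the Remark on ASC partitions, an ASC partition $\lambda\vdash 2s$ of rank $d$ is determined by its Frobenius data $f_1>f_2>\cdots>f_d\geq 1$, with $\lambda_i-i=f_i$. These satisfy $f_1+\cdots+f_d=s$, because $|\lambda|=2\sum f_i$. By Lemma~\ref{lem:uniqmu}, the only possible type of the form $(k^q1^r)$ is $\mu=(k^d1^{2s-kd})$. Theorem~\ref{thm:frobconditions} then says $\lambda\in{\rm ASC}_{1,k}(2s)$ if and only if
\[
f_j+f_{d+1-j}\ge k \quad\text{and}\quad f_{j+1}+f_{d+1-j}<k \qquad (1\le j\le d,\ f_{d+1}=0).
\]
Proposition~\ref{prop:palpha} with $N=k-1$ says a nonempty set $\{x_1>\cdots>x_\ell\}$ equals $P_\alpha$ for some $\alpha\in\oc_{\le k-1}$ exactly when the same inequalities hold with $N+1=k$. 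Moreover, the case $j=d$ of the second inequality gives $f_1<k$, so $f_1\le k-1$ and the set $\{f_1,\ldots,f_d\}$ lies in $[k-1]$ automatically.

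Define the map
\[
\Phi:\lambda\mapsto \{f_1,\ldots,f_d\}=\{\lambda_i-i\}_{1\le i\le {\rm rank}(\lambda)} ,
\]
with the empty partition sent to $\emptyset=P_{\emptyset}$ (this covers $s=0$). By the previous paragraph, $\Phi$ sends ${\rm ASC}_{1,k}(2s)$ into the collection of sets of the form $P_\alpha$, and every such set arises from a unique ASC partition. Since the sum of the elements of $P_\alpha$ is $S_{k-1}(\alpha)$, the condition $\sum f_i=s$ becomes $S_{k-1}(\alpha)=s$.

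It remains to check that $\alpha\mapsto P_\alpha$ is injective on $\oc_{\le k-1}$, so that sets of this form correspond to compositions. Since $\sigma_{k-1}$ is a permutation, $P_\alpha$ determines the multiset of values $\sigma(s_i)$, where $s_i=\alpha_1+\cdots+\alpha_i$. The parities $s_i\equiv i \pmod 2$ sort these values: odd partial sums map to large values (in decreasing order) and even partial sums map to small values (in increasing order), as in the proof of Proposition~\ref{prop:palpha}. This recovers the sequence of partial sums from $P_\alpha$, and hence recovers $\alpha$. Composing $\Phi$ with the inverse of $\alpha\mapsto P_\alpha$ gives the bijection.

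The main obstacle is making sure the index conventions in Theorem~\ref{thm:frobconditions} and Proposition~\ref{prop:palpha} match exactly, including the boundary convention $f_{d+1}=0$ versus $x_{\ell+1}=0$ and the equality $\ell=d$. Beyond that, the converse direction of Proposition~\ref{prop:palpha} is only sketched in the paper and should be spelled out. The way to do this is to reconstruct the partial sums from a valid set $\{x_1>\cdots>x_\ell\}$ via $s_{2j-1}=\sigma^{-1}(x_j)$ and $s_{2j}=\sigma^{-1}(x_{\ell+1-j})$. One then checks that the two inequalities are exactly what makes the differences $s_{i+1}-s_i$ positive and odd, with $s_\ell\le k-1$.
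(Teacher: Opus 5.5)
Your proposal is correct and follows the paper's proof. The paper likewise matches the conditions of Theorem~\ref{thm:frobconditions} with those of Proposition~\ref{prop:palpha} at $N=k-1$, uses $\sum_i(\lambda_i-i)=s$, and notes that $\alpha$ is determined by $P_\alpha$; your additions (the case $s=0$, the bound $f_1\le k-1$, and the injectivity of $\alpha\mapsto P_\alpha$) are sound, and that injectivity is immediate because $\sigma_{k-1}^{-1}(P_\alpha)$ is exactly the set of partial sums of $\alpha$, so the parity sorting is not needed.
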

\begin{proof}
  By Proposition \ref{prop:palpha} and Theorem \ref{thm:frobconditions} a partition $\lambda$ is an element of ${\rm ASC}_{1,k}(2s)$ if and only if the set  $\{\lambda_1-1,\lambda_2-2,\ldots,\lambda_{r(\lambda)}-r(\lambda)\}$ is of the form $P_\alpha$ for some composition $\alpha\in \oc_{\leq k-1}$.  Since $\sum_{i=1}^{r(\lambda)}(\lambda_i-i)=s$ and $\alpha$ is uniquely determined by $P_\alpha$, the result follows.
\end{proof}
\begin{remark}
If $\lambda\in {\rm ASC}_{1,k}(2s) $ maps to a composition $\alpha\in \oc_{\leq k-1}$ under the bijection of Proposition \ref{prop:bijection}, then  $\ell(\alpha)=r(\lambda)$.  
\end{remark}

\begin{theorem}\label{prop:denominator}
If $p_k(x;a,b)=1-ax-bx^k$ and $H(x)=1/p_k(x;a,b)*1/p_k(x;a,b)=H_1(x)/H_2(x)$ where $H_1(x)$ and $H_2(x)$ are coprime in $\QQ(a,b)[x]$, then
\[
H_2(x)=p_k(\sqrt{x};a,b)\; p_k(-\sqrt{x};a,b)\; f_{k-1}(x;a,-b),
\]
with $f_{k-1}(x;a,-b)$ as in Equation~\eqref{eq:fnweighted}.
\end{theorem}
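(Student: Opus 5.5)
By Proposition~\ref{prop:denom}, $H_2(x)$ equals, up to a unit, $\prod_{i}(1-\gamma_i^2x)\prod_{i<j}(1-\gamma_i\gamma_jx)$. The plan is to identify the two factors separately.

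\textbf{The diagonal factor.} Since $p_k(y;a,b)=\prod_i(1-\gamma_iy)$, we get
\[
p_k(\sqrt{x};a,b)\,p_k(-\sqrt{x};a,b)=\prod_i(1-\gamma_i\sqrt{x})(1+\gamma_i\sqrt{x})=\prod_i(1-\gamma_i^2x).
\]
Hence it remains to show
\[
\prod_{1\le i<j\le k}(1-\gamma_i\gamma_jx)=\sum_s(-1)^s(e_s\circ e_2)(\gamma)\,x^s=f_{k-1}(x;a,-b).
\]

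\textbf{Evaluating $e_s\circ e_2$ at $\gamma$.} Expand $e_s\circ e_2$ using Theorem~\ref{thm:amu} and specialize at $\gamma$. From $\prod(1-\gamma_ix)=1-ax-bx^k$ we read off
\[
e_1(\gamma)=a,\qquad e_k(\gamma)=(-1)^{k-1}b,\qquad e_j(\gamma)=0 \text{ for } j\notin\{0,1,k\}.
\]
So only $\mu=(k^q1^r)$ survive, giving
\[
e_s\circ e_2(\gamma)=\sum_{\mu=(k^q1^r)\vdash 2s} a_\mu\, a^r\big((-1)^{k-1}b\big)^q .
\]

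Next I regroup $a_\mu$ by shape $\lambda$. By Lemma~\ref{lem:uniqmu}, each ASC $\lambda$ contributes to at most one such $\mu$, namely the one with $q=r(\lambda)$. By Corollary~\ref{cor:card1}, that contribution is a single tableau with sign $\operatorname{sgn}(S_\lambda)$. The sum therefore runs over $\lambda\in{\rm ASC}_{1,k}(2s)$. Proposition~\ref{prop:bijection} converts it to a sum over $\alpha\in\oc_{\le k-1}$ with $S_{k-1}(\alpha)=s$ and $\ell(\alpha)=q$. The exponent of $a$ is $r=2s-kq$. Corollary~\ref{cor:length} gives $q=\lfloor 2s/k\rfloor$, so $r={\rm rem}(2s,k)={\rm rem}(\alpha)$, matching the exponent in \eqref{eq:fnweighted}.

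\textbf{Matching the signs (the main obstacle).} I must show
\[
(-1)^s\operatorname{sgn}(S_\lambda)\,(-1)^{(k-1)q}(-1)^q=(-1)^{\lfloor(\ell(\alpha)+1)/2\rfloor}.
\]
The left side is simply what $f_{k-1}(x;a,-b)$ requires once $b$ is replaced by $-b$. To prove it, I would compute $\operatorname{sgn}(S_\lambda)$ from the explicit removal procedure in the proof of Theorem~\ref{thm:frobconditions}. The $1$-ribbons in the first column have sign $+1$. The $k$-ribbon through $c_{d-j}$ starts at the end of row $d-j$ and reaches the first column, which is precisely the rim-hook $R_{d-j}$ with some first-column cells removed. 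Its row span is therefore determined by the row index and the column length at that stage, computable from the Frobenius data $f_{j+1},f_{d-j}$. Its column span $n_c$ is readable from the same data, and then $n_r=k+1-n_c$.

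Summing the resulting exponents over all $q$ ribbons should give a parity depending only on $s$, $q$ and $k$, since $s=\sum f_i$. I would first check this against small cases, e.g.\ $k=2,3$ and Table~\ref{tab:seqfn}, then prove it by induction on $d$ using the recursive structure of the removal. An alternative is a pairing argument $f_{j+1}\leftrightarrow f_{d-j}$ in which each ribbon contributes $(-1)^{n_r-1}$ with $n_c-1=f_{d-j}$ or similar.

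This sign bookkeeping is where I expect the difficulty. If it proves stubborn, a fallback is to compare the $b$-degree-$q$ parts on both sides up to a sign $\epsilon_q$, since the monomials in each degree already coincide. The value of $\epsilon_q$ would then be pinned down by the lowest term, for instance using $\prod_{i<j}(1-\gamma_i\gamma_jx)$ at $a=0$, where the $\gamma_i$ are the $k$th roots of $b$ times roots of unity.
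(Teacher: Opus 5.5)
Your reduction is the same as the paper's. It uses the diagonal factor, the vanishing of $e_j(\gamma)$ for $j\notin\{0,1,k\}$, Lemma~\ref{lem:uniqmu}, Corollary~\ref{cor:card1}, Proposition~\ref{prop:bijection}, and Corollary~\ref{cor:length} for the exponent of $a$. The identity you isolate, $(-1)^s{\rm sgn}(S_\lambda)(-1)^{kq}=(-1)^{\lfloor (q+1)/2\rfloor}$, is exactly what is needed. However, you never prove it (``should give'', small-case checks, an unspecified induction or pairing), so the one step that is not bookkeeping is missing.

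Your fallback does not close this gap. Saying the degree-$q$ parts agree ``up to a sign $\epsilon_q$'' presupposes that every tableau contributing to $x^s$ carries the common sign $(-1)^s\epsilon_q$. Without that, the coefficient of $x^s$ on the left could even cancel, and the presupposition is the claim itself. Moreover, at $a=0$ only terms with $r=2s-kq=0$ survive. There are none when $k$ and $q$ are both odd, so $\epsilon_q$ could not be fixed that way in general.

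The missing observation is that the column span of each $k$-ribbon is just $\lambda_i$. The $k$-ribbon through $c_i=(i,\lambda_i)$ has $c_i$ as its rightmost cell (as in the proofs of Lemma~\ref{lem:uniqmu} and Theorem~\ref{thm:frobconditions}) and contains a cell of column $1$. Hence it occupies columns $1,\dots,\lambda_i$, so $n_c=\lambda_i$, $n_r=k+1-\lambda_i$, and its sign is $(-1)^{k-\lambda_i}$; the $1$-ribbons contribute nothing. Thus
\[
{\rm sgn}(S_\lambda)=(-1)^{kq-\sum_{i=1}^{q}\lambda_i},\qquad \sum_{i=1}^{q}\lambda_i=\sum_{i=1}^{q}(\lambda_i-i)+\binom{q+1}{2}=s+\binom{q+1}{2},
\]
so your left-hand side equals $(-1)^{\binom{q+1}{2}}$. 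Since $\binom{q+1}{2}\equiv\lfloor (q+1)/2\rfloor \pmod 2$ (check $q$ modulo $4$), this finishes the argument with no induction. This is the paper's computation. Your row-span route would give the same result, because the $f_{j+1}$ terms cancel and the $k$-ribbon through $c_{d-j}$ spans $k+1-\lambda_{d-j}$ rows.
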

\begin{proof}
By Proposition \ref{prop:denom}, the denominator of $H(x)$ equals
  \begin{align*}
    \prod_{i=1}^k(1-\gamma_i^2x)\prod_{1\leq i<j\leq k}(1-\gamma_i\gamma_jx).
  \end{align*}
  where the $\gamma_i$ are defined by $p_k(x;a,b)=\prod_{i=1}^k(1-\gamma_ix)$. First note that
  \begin{align*}
p_k(\sqrt{x};a,b)\; p_k(-\sqrt{x};a,b)=        \prod_{i=1}^k(1-\gamma_i^2x).
  \end{align*}
  On the other hand, we have
  \begin{align*}
    \prod_{1\leq i<j\leq k}(1-\gamma_i\gamma_jx)=\sum_{s=0}^{k \choose 2}(-1)^s \; e_s\circ e_2(\gamma)\; x^s.    
  \end{align*}
  By Theorem \ref{thm:amu}, 
    \begin{align*}
    e_s\circ e_2=\sum_{\mu\vdash 2s} \sum_{\substack{\lambda \vdash 2s\\\lambda \text{ is ASC}}}\sum_{S\in {\rm SRHT}(\lambda,\mu)}{\rm sgn}(S)\; e_\mu.
  \end{align*}
We have $e_1(\gamma)=a$, $ e_k(\gamma)=(-1)^{(k+1)}b,$ and $e_j(\gamma)=0$ for $1<j<k$. Therefore $e_\mu(\gamma)=0$ unless $\mu$ has all parts in $\{1,k\}$. By Lemma \ref{lem:uniqmu}, for each $\lambda\in {\rm ASC}_{1,k}(2s)$ there is a unique partition $\mu$ with parts in $\{1,k\}$ such that ${\rm SRHT}(\lambda,\mu)$ is nonempty, and by Corollary \ref{cor:card1} this set contains a unique tableau. Hence
    \begin{align*}
    e_s\circ e_2(\gamma)= \sum_{\lambda \in {\rm ASC}_{1,k}(2s)}{\rm sgn}(S_\lambda)\; e_\mu(\gamma),
  \end{align*}
where $\mu=(k^{r(\lambda)}1^{2s-k\cdot r(\lambda)})$ and $S_\lambda$ is the unique element of ${\rm SRHT}(\lambda,\mu)$. To compute ${\rm sgn}(S_\lambda)$, note that the 1-ribbons do not contribute to the sign while the $k$-ribbons contribute $\prod_{i=1}^{r(\lambda)}(-1)^{k-\lambda_i}=(-1)^{k\cdot r(\lambda)-D(\lambda)}$, where $D(\lambda)=\sum_{i=1}^{r(\lambda)}\lambda_i$. Therefore  
    \begin{align*}
      e_s\circ e_2(\gamma)&= \sum_{\lambda \in {\rm ASC}_{1,k}(2s)} (-1)^{k\cdot r(\lambda)+D(\lambda)} ((-1)^{k+1}b)^{r(\lambda)}a^{2s-k\cdot r(\lambda)}\\
      &= \sum_{\lambda \in {\rm ASC}_{1,k}(2s)} (-1)^{r(\lambda)+D(\lambda)} b^{r(\lambda)}a^{2s-k\cdot r(\lambda)}.
  \end{align*}
  In view of the bijection of Proposition \ref{prop:bijection}, 
  \begin{align*}
          e_s\circ e_2(\gamma)&=\sum_{\substack{\alpha\in \oc_{\leq k-1}\\ S_{k-1}(\alpha)=s}} (-1)^{\ell(\alpha)+s+{\ell(\alpha)+1 \choose 2}}\; b^{\ell(\alpha)}a^{2s-k\cdot \ell(\alpha)}.
  \end{align*}
It follows that
  \begin{align*}
    \sum_{s=0}^{k \choose 2} (-1)^s e_s\circ e_2(\gamma)x^s &=\sum_{\alpha\in \oc_{\leq k-1}} (-1)^{\lfloor \frac{\ell(\alpha)}{2} \rfloor}\; b^{\ell(\alpha)}a^{2S_{k-1}(\alpha)-k\cdot \ell(\alpha)}x^{S_{k-1}(\alpha)}\\
    &=\sum_{\alpha\in \oc_{\leq k-1}}(-1)^{\left\lfloor \frac{\ell(\alpha)}{2} \right\rfloor}b^{\ell(\alpha)}a^{\rm rem(\alpha)}x^{S_{k-1}(\alpha)},
  \end{align*}
by Corollary \ref{cor:length}. On the other hand, by \eqref{eq:fnweighted} we have
\[
f_{k-1}(x;a,-b)=\sum_{\alpha\in \oc_{\leq k-1}}(-1)^{\left\lfloor \frac{\ell(\alpha)+1}{2}\right\rfloor+\ell(\alpha)}a^{{\rm rem}(\alpha)}b^{\ell(\alpha)}x^{S_{k-1}(\alpha)}.
\]
Since
\[
\left\lfloor \frac{m+1}{2}\right\rfloor+m \equiv \left\lfloor \frac{m}{2}\right\rfloor \pmod 2
\]
for every integer $m\geq 0$, the last displayed sum equals
\[
\sum_{\alpha\in \oc_{\leq k-1}}(-1)^{\left\lfloor \frac{\ell(\alpha)}{2}\right\rfloor}a^{{\rm rem}(\alpha)}b^{\ell(\alpha)}x^{S_{k-1}(\alpha)}.
\]
Therefore $\prod_{1\leq i<j\leq k}(1-\gamma_i\gamma_jx)=f_{k-1}(x;a,-b),$ 
and the result follows.
\end{proof}

\begin{remark}
The reappearance of the polynomial $f_N(x;a,-b)$ in Theorem \ref{prop:denominator} is quite unexpected. It would be nice to have a conceptual explanation for this coincidence.
\end{remark}

  \section{Final Hadamard-product calculation and proof of the main theorem}\label{sec:final-hadamard-proof}
  In this section we prove Theorem \ref{thm:main} using a result of Graham on fault-free tilings. 
  \begin{lemma}\label{lem:finalAden}
Let $N\geq 2$, let $p_N(x;a,b)=1-ax-bx^N$, and let $J(x)$ be the unique formal power series satisfying
\[
\frac{1}{1-J(x)}=\frac{1}{p_N(x;a,b)}*\frac{1}{p_N(x;a,b)}.
\]
When $J(x)$ is written as a quotient of coprime polynomials in $\QQ(a,b)[x]$, its denominator is $f_{N-1}(x;a,b)$.
  \end{lemma}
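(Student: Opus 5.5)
The plan is to transport the coprimality result of Section~\ref{sec:central-horizontal-fault} (the corollary stating that $V_k(x;1,b)$ has reduced denominator $Q_k(x)=\det(I-A(x))$) to the present normalization, using a change of the variable $b$ and the homogeneity of Remark~\ref{rem:vkhomogeneity}.

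\emph{Step 1: identify $J$ with $V_N$.} Set $k=N$. By Proposition~\ref{prop:Hkvkandhadamard},
\[
\frac{1}{1-V_N(x;a,b)}=\frac{1}{p_N(x;a,b^N)}*\frac{1}{p_N(x;a,b^N)} .
\]
Uniqueness of the power series $J$ with constant term $0$ solving $1/(1-J)=\cdot$ then gives $J(x;a,b^N)=V_N(x;a,b)$. Horizontal tiles always occur in blocks of $N$, so every monomial of $V_N(x;a,b)$ involves $b$ only through $b^N$. Hence $J(x;a,c)$ is obtained from $V_N$ by the formal substitution $b^N\mapsto c$.

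\emph{Step 2: the case $a=1$.} By Theorem~\ref{thm:vkden}, Corollary~\ref{cor:ffden} and the coprimality corollary, $V_N(x;1,b)$ has reduced denominator $f_{N-1}(x;1,b^N)$ in $\CC(b)[x]$. Both numerator and denominator are polynomials in $b^N$. Coprimality of two polynomials in $K[x]$ is unchanged by passing to an extension field, since the gcd is computed by the Euclidean algorithm. So I would pass from $\CC(c)\subset\CC(b)$, with $c=b^N$, to conclude that
\[
J(x;1,c)=\frac{\widehat P(x)}{f_{N-1}(x;1,c)}
\]
with $\gcd(\widehat P,f_{N-1}(x;1,c))=1$ in $\CC(c)[x]$. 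Since all coefficients are rational, the same holds in $\QQ(c)[x]$.

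\emph{Step 3: restore $a$ by homogeneity.} A monomial $a^v c^{h}x^n$ of $J$, where $h$ counts blocks, satisfies $v+Nh=2n$. This gives
\[
J(x;a,b)=J\bigl(a^2x;1,b\,a^{-N}\bigr).
\]
The substitution $x\mapsto a^2x$, $c\mapsto ba^{-N}$ extends to a ring isomorphism $\QQ(a,c)[x]\to\QQ(a,b)[x]$, so it preserves coprimality. Therefore the reduced denominator of $J(x;a,b)$ is
\[
f_{N-1}\bigl(a^2x;1,ba^{-N}\bigr)=\sum_s c_s(N-1)\,b^{\ell_s}\,a^{2s-N\ell_s}\,x^s ,
\]
where $\ell_s=\lfloor 2s/N\rfloor$ by Corollary~\ref{cor:length} applied with $N-1$ in place of $N$. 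Then $2s-N\ell_s={\rm rem}(2s,N)$, which matches the exponent of $a$ in \eqref{eq:fnxab} for $f_{N-1}(x;a,b)$. The expression is therefore exactly $f_{N-1}(x;a,b)$.

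\emph{Main obstacle.} The delicate step is the bookkeeping in Steps~1 and~2. One must check that the substitution $b^N\mapsto c$ really produces polynomials in $c$ on both sides, and that the unit normalization is consistent ($f_{N-1}$ has constant term $1$). There is also the minor issue that $V_N$ carries an extra factor involving $x^N$ and $C(x)$; this is harmless, because the coprimality corollary was already stated for $V_k$ itself.
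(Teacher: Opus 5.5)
Your proposal is correct and follows essentially the same route as the paper: identify $J$ with $V_N$ after renaming $b^N$, use the homogeneity of Remark~\ref{rem:vkhomogeneity} to reduce to $a=1$, invoke Theorem~\ref{thm:vkden} together with the coprimality corollary, and convert $f_{N-1}(a^2x;1,\cdot)$ into $f_{N-1}(x;a,\cdot)$ via Corollary~\ref{cor:length}. The differences are that you rename $b^N\mapsto c$ before restoring $a$ (the paper renames afterwards), and that you justify explicitly why the reduced denominator is preserved under the field extension and the substitution isomorphism, a point the paper leaves implicit.
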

  \begin{proof}
Let $c$ be an indeterminate, and consider the power series $V_N(x;a,c)$ of Equation~\eqref{eq:hkvk}. By Proposition \ref{prop:Hkvkandhadamard},
\[
\frac{1}{1-V_N(x;a,c)}=\frac{1}{p_N(x;a,c^N)}*\frac{1}{p_N(x;a,c^N)}.
\]
Thus the series $J(x)$ is obtained from $V_N(x;a,c)$ by replacing the indeterminate $c^N$ with $b$. By Remark \ref{rem:vkhomogeneity}, applied with $k=N$, we have $ V_N(x;a,c)=V_N(a^2x;1,ca^{-1}).$ Applying Theorem \ref{thm:vkden} with $k=N$ and $b=ca^{-1}$, we find that the denominator of $V_N(x;a,c)$ is $f_{N-1}(a^2x;1,(ca^{-1})^N).$ Using \eqref{eq:fnweighted}, we obtain
\[
f_{N-1}(a^2x;1,(ca^{-1})^N)=\sum_{\alpha\in \oc_{\leq N-1}}(-1)^{\left\lfloor \frac{\ell(\alpha)+1}{2}\right\rfloor}c^{N\ell(\alpha)}a^{2S_{N-1}(\alpha)-N\ell(\alpha)}x^{S_{N-1}(\alpha)}.
\]
Now Corollary \ref{cor:length}, applied with $N-1$ in place of $N$, gives
\[
\ell(\alpha)=\left\lfloor \frac{2S_{N-1}(\alpha)}{N}\right\rfloor
\qquad (\alpha\in \oc_{\leq N-1}).
\]
Therefore $2S_{N-1}(\alpha)-N\ell(\alpha)={\rm rem}(\alpha)$ where ${\rm rem}(\alpha)$ is the remainder when $2S_{N-1}(\alpha)$ is divided by $N$, as in \eqref{eq:fnweighted}. Consequently $\displaystyle f_{N-1}(a^2x;1,(ca^{-1})^N)=f_{N-1}(x;a,c^N).$ Since every expression above depends on $c$ only through $c^N$, we may rename the indeterminate $c^N$ as $b$. It follows that the denominator of $J(x)$ is $f_{N-1}(x;a,b)$.
  \end{proof}

  \begin{theorem}\label{thm:finalgf}
For $N\geq 2$, let $p_N(x;a,b)=1-ax-bx^N$, and define
\[
H(x):=\frac{1}{p_N(x;a,b)}*\frac{1}{p_N(x;a,b)}.
\]
Then
\[
H(x)=\frac{f_{N-1}(x;a,b)}{p_N(\sqrt{x};a,b)p_N(-\sqrt{x};a,b)f_{N-1}(x;a,-b)}.
\]
  \end{theorem}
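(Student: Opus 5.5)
We combine the two denominator computations already in hand. Write $H(x)=H_1(x)/H_2(x)$ with $H_1,H_2$ coprime in $\QQ(a,b)[x]$, normalized so that $H_2(0)=1$. By Theorem~\ref{prop:denominator},
\[
H_2(x)=p_N(\sqrt{x};a,b)\,p_N(-\sqrt{x};a,b)\,f_{N-1}(x;a,-b),
\]
and each factor has constant term $1$, so this normalization is consistent. It therefore suffices to show that $H_1(x)=f_{N-1}(x;a,b)$.

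For the numerator we use Lemma~\ref{lem:finalAden}. Write $J(x)=J_1(x)/J_2(x)$ in lowest terms with $J_2=f_{N-1}(x;a,b)$; note $f_{N-1}(0;a,b)=1$. Then
\[
H(x)=\frac{1}{1-J(x)}=\frac{J_2(x)}{J_2(x)-J_1(x)}.
\]
Since $\gcd(J_1,J_2)=1$, we get $\gcd(J_2,J_2-J_1)=1$, so this expression is already in lowest terms. By uniqueness of reduced representations up to units, there is $c\in\QQ(a,b)^\times$ with
\[
J_2=c\,H_1,\qquad J_2-J_1=c\,H_2 .
\]
Evaluating at $x=0$ gives $J_1(0)=0$, because $J$ is a power series with zero constant term. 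Hence $1=J_2(0)=c\,H_2(0)=c$, and so $H_1=J_2=f_{N-1}(x;a,b)$. Substituting $H_1$ and $H_2$ into $H=H_1/H_2$ gives the claimed formula.

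Two points need checking. First, the series $J$ of Lemma~\ref{lem:finalAden} has zero constant term. This holds because $H(0)=1$; combinatorially, it matches $V_N$ having no constant term. Second, the normalizations of the reduced forms must be compatible: the lemma identifies the denominator of $J$ only up to a unit, and we pin it to $f_{N-1}(x;a,b)$ using its constant term $1$. The main point is just this bookkeeping of units. As a consistency check, Corollary~\ref{cor:Hnumdegree} bounds $\deg H_1$ by $\binom{N}{2}$, and $\deg f_{N-1}=\binom{N}{2}$, so the degree bound holds with equality.

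Finally, setting $N=k$ and substituting $b\mapsto b^k$ recovers Theorem~\ref{thm:hadamard}. Combined with Proposition~\ref{prop:Hkvkandhadamard}, this then feeds into the proof of Theorem~\ref{thm:main}.
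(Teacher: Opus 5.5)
Your proposal is correct and follows the paper's proof. The numerator is identified by writing $H=J_2/(J_2-J_1)$ with $J_2=f_{N-1}(x;a,b)$ from Lemma~\ref{lem:finalAden} and observing that this expression is already in lowest terms, and the denominator comes from Theorem~\ref{prop:denominator}. Your constant-term normalization ($J(0)=0$, $f_{N-1}(0;a,\pm b)=1$, $p_N(0;a,b)=1$) makes explicit the unit bookkeeping that the paper leaves implicit.
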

  \begin{proof}
Let $J(x)$ be the unique formal power series satisfying $H(x)=(1-J(x))^{-1}.$ Write $J(x)=R(x)/S(x)$ with $R(x),S(x)\in \QQ(a,b)[x]$ coprime. By Lemma \ref{lem:finalAden}, one has $S(x)=f_{N-1}(x;a,b).$ Since
\[
H(x)=\frac{1}{1-J(x)}=\frac{S(x)}{S(x)-R(x)},
\]
and $\gcd(S(x),S(x)-R(x))=\gcd(S(x),R(x))=1,$ this expression is already in lowest terms. Hence the numerator of $H(x)$ is $f_{N-1}(x;a,b)$.

On the other hand, Theorem \ref{prop:denominator}, applied with $k=N$, shows that the denominator of $H(x)$ in lowest terms is $p_N(\sqrt{x};a,b)p_N(-\sqrt{x};a,b)f_{N-1}(x;a,-b).$ Combining the numerator and denominator of $H(x)$ yields
\begin{align*}
H(x)&=\frac{f_{N-1}(x;a,b)}{p_N(\sqrt{x};a,b)p_N(-\sqrt{x};a,b)f_{N-1}(x;a,-b)}.\qedhere  
\end{align*}
\end{proof}
  \subsection{Tilings without a central horizontal fault}
  We now determine the contribution from vertically fault-free tilings that do not have a central horizontal fault. Let
  \[
  U_k(x;a,b):=\sum_{n\geq 0}u_{k,n}(a,b)x^n,
  \]
  where $u_{k,n}(a,b)$ is the total weight of the vertically fault-free tilings of a $2k\times n$ rectangle with $k\times 1$ tiles and with no central horizontal fault. In view of the definition of $V_k(x;a,b)$ in Equation \eqref{eq:hkvk}, the generating function for all vertically fault-free tilings of a $2k\times n$ rectangle is given by
  \[
  W_k(x;a,b):=V_k(x;a,b)+U_k(x;a,b).
  \]

  \begin{lemma}\label{lem:decompose}
With $W_k(x;a,b)$ as above, we have
\[
F_k(x;a,b)=\frac{1}{1-W_k(x;a,b)}.
\]
  \end{lemma}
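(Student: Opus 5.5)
The plan is the standard decomposition of a tiling along its vertical faults. Fix $n\geq 1$ and a tiling $T\in\mathcal{T}(2k,n;k)$. Let $0<x_1<\cdots<x_{m-1}<n$ be the positions of all vertical faults of $T$, and set $x_0=0$, $x_m=n$.

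\textbf{Step 1: cut into pieces.} No tile crosses any of the lines $x=x_i$, so $T$ restricts to tilings $T_1,\ldots,T_m$ of the rectangles $2k\times(x_i-x_{i-1})$. Each $T_i$ is vertically fault-free. Indeed, a vertical fault of $T_i$ at an interior position would not be crossed by any tile of $T_i$. The only tiles meeting that line are tiles of $T_i$, so it would also be a vertical fault of $T$. This contradicts the choice of the $x_i$ as all the faults of $T$.

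\textbf{Step 2: glue pieces back.} Conversely, take any sequence of vertically fault-free tilings of $2k\times n_i$ rectangles with $n_i\geq1$ and $\sum n_i=n$. Placing them side by side gives a tiling of the $2k\times n$ rectangle. Its vertical faults are exactly the junction lines, because any other vertical line lies in the interior of some piece and is crossed by a tile there. Hence cutting and gluing are mutually inverse. This is a bijection between $\mathcal{T}(2k,n;k)$ and finite sequences of vertically fault-free tilings of total width $n$. The empty sequence corresponds to $n=0$.

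\textbf{Step 3: weights.} The weight $w(T)=a^vb^h$ is multiplicative: the vertical and horizontal tile counts of $T$ are the sums of those of the pieces. The generating function for all vertically fault-free tilings, graded by width, is $W_k=V_k+U_k$, with no constant term since $V_k$ and $U_k$ start at $n\geq1$. Summing over sequences therefore gives
\[
F_k=\sum_{m\geq 0}W_k^m=\frac{1}{1-W_k}.
\]
This holds as formal power series because $W_k$ has zero constant term.

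\textbf{Step 4: check the splitting of $W_k$.} We must confirm that $V_k+U_k$ accounts for each vertically fault-free tiling exactly once. This holds because every such tiling either has a horizontal fault at $y=k$ (counted by $V_k$) or does not (counted by $U_k$), and the two classes are disjoint.

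There is no real obstacle. The only point needing care is Step 1, the claim that a fault of a piece is a fault of the whole tiling, which uses that no tile straddles a chosen fault line.
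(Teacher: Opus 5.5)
Your proof is correct and takes the same approach as the paper, which states in one line that each tiling of a $2k\times n$ rectangle corresponds to a unique finite sequence of vertically fault-free tilings whose widths sum to $n$; your cut-and-glue bijection, the multiplicativity of $w$, and the vanishing constant term of $W_k$ supply the details behind that line. One small point: the paper indexes $U_k$ from $n\geq 0$, but $u_{k,0}=0$ because the empty tiling vacuously has a central horizontal fault, so $W_k$ indeed has no constant term.
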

  \begin{proof}
Since every tiling of a $2k\times n$ rectangle corresponds to a unique finite sequence of vertically fault-free tilings whose widths add up to $n$, the lemma follows.
  \end{proof}

  We also need Graham's characterization of fault-free tilings. A tiling is \emph{fault-free} if it has neither a vertical fault nor a horizontal fault.

  \begin{theorem}[Graham {\cite[p.~125]{Graham1981}}]\label{thm:graham}
Let $k_1,k_2,m,n$ be positive integers with $\gcd(k_1,k_2)=1$ and $mn>k_1k_2$. A fault-free tiling of a $m\times n$ rectangle with $k_1\times k_2$ tiles exists if and only if the following conditions hold:
\begin{enumerate}
\item Each of $k_1$ and $k_2$ divides at least one of $m$ and $n$.
\item Each of $m$ and $n$ can be expressed as $uk_1+vk_2$ for positive integers $u$ and $v$ in at least two ways.
\item If $\{k_1,k_2\}=\{1,2\}$, then $(m,n)\neq (6,6)$.
\end{enumerate}
  \end{theorem}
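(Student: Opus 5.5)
The plan is to prove necessity and sufficiency separately. Necessity reduces to three obstructions (divisibility, edge compositions, the $6\times 6$ exception). Sufficiency is by explicit base tilings together with extension operations that enlarge a side by $k_1$ or $k_2$ while keeping the tiling fault-free.

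\emph{Necessity of (1).} I would use the root-of-unity coloring argument. Fix a primitive $k_1$-th root of unity $\omega$ and give the cell $(i,j)$ the weight $\omega^{i+j}$. Every $k_1\times k_2$ tile, in either orientation, has a side of length $k_1$, so its total weight is $0$. The total weight of the rectangle factors as $\bigl(\sum_{i<m}\omega^i\bigr)\bigl(\sum_{j<n}\omega^j\bigr)$, which vanishes only if $k_1\mid m$ or $k_1\mid n$. The same argument applies to $k_2$. This is Klarner's criterion, already quoted in the introduction.

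\emph{Necessity of (2).} Consider the bottom edge of length $n$. The tiles resting on it have widths in $\{k_1,k_2\}$, so they give a representation $n=uk_1+vk_2$. Suppose $u=0$, so only one width occurs. I would then run a propagation argument upward: the tiles along the edge form a strip of constant height, and the strip's upper boundary is flat. Either some extension of the strip produces a horizontal fault, or the tiles above are forced to repeat the same pattern, which yields vertical faults. Hence $u,v>0$. For the second representation I would look at a different cross-section, for example the top edge or the row of tiles just above the tallest bottom tile. I would argue that if every cross-section induced the same multiset of widths, then the vertical cuts at the shared partial sums would be common to all rows and give a vertical fault. The same argument applies to $m$.

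\emph{Necessity of (3) and sufficiency.} For the exceptional case I would use the classical parity count for $6\times 6$ domino tilings. There are $10$ interior lines. Each such line splits the board into two regions of even area, so it must be crossed by an even number of dominoes, and in a fault-free tiling that number is at least $2$. No domino crosses two lines, so at least $20$ dominoes are needed, but the board holds only $18$.

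For sufficiency, I would first build explicit fault-free tilings for the minimal admissible rectangles: the smallest $m,n$ admitting two positive representations, subject to (1). The second step is an extension lemma. Given a fault-free tiling of $m\times n$, I would produce one of $m\times(n+k_1)$, and similarly $m\times(n+k_2)$, by inserting a strip along a staircase boundary (not along a straight line) and locally re-tiling so that no new fault appears. Since every admissible $n$ is reached from the base cases by adding $k_1$'s and $k_2$'s, induction covers everything except the $6\times 6$ domino case. For dominoes, an explicit $5\times 6$ and $6\times 8$ tiling plus the extension lemma handle the remaining sizes.

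I expect two steps to be the main obstacles. The first is the second representation in (2), which requires a careful structural analysis of cross-sections. The second is designing the extension move so that it provably creates no fault; the gluing along a non-straight seam is delicate when $k_1$ and $k_2$ are small. In the paper itself this theorem is imported from Graham, and only the case $(k_1,k_2)=(k,1)$ with $m=2k$ is actually needed.
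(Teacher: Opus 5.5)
\textbf{Gap in the necessity of condition (2).} The paper does not prove this theorem. It quotes Graham and uses only the ``only if'' direction through condition (2), in the case $(m;k_1,k_2)=(2k;k,1)$, where $2k=1\cdot k+k\cdot 1$ is the unique positive representation.

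Several parts of your proposal are correct:
\begin{itemize}
\item the root-of-unity weighting argument for (1);
\item the parity count for (3): ten interior lines, each crossed an even number $\geq 2$ of times, each domino crossing exactly one of them, and $20>18$;
\item the positivity half of (2). This needs no propagation: bottom tiles of a single width share one height $h$. If $h<m$, the line $y=h$ is a horizontal fault; otherwise the rectangle is a single row of at least two congruent tiles, which has a vertical fault.
\end{itemize}

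The second representation is the substance of (2), and it is exactly what the paper uses. Your argument for it does not work, for two reasons. First, horizontal cross-sections do not group the tiles into rows. Two cross-sections with the same multiset of widths can list them in different orders, so there are no ``shared partial sums'' producing a common vertical cut. Second, even a proof that not all cross-sections agree would not suffice, because the other cross-sections may realize representations with a zero coefficient. For dominoes and $n=4$, the cross-sections $1+1+1+1$, $1+1+2$ and $2+2$ all occur. Yet $4=u+2v$ has only one solution with $u,v>0$, and indeed no $m\times 4$ rectangle has a fault-free domino tiling. What you need is an argument that $n$ has a representation $uk_1+vk_2$ with $u\geq 1$ and $v>k_1$; then $(u+k_2,v-k_1)$ is a second positive one. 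Your sketch contains no such argument, not even in the case $(2k;k,1)$ that the paper needs.

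\textbf{Gap in sufficiency.} The sufficiency half is also not a proof, and the extension lemma as stated is false. A fault-free $5\times 6$ domino tiling exists; you use it yourself as a base case. Transposing it and applying your lemma with the increment $1\in\{k_1,k_2\}$ would give a fault-free $6\times 6$ tiling, contradicting your own parity argument. Similarly, for $(k_1,k_2)=(1,3)$ the $7\times 9$ rectangle satisfies (1)--(3), but $7\times 10$ violates (1) and cannot be tiled at all. To repair this you would need:
\begin{itemize}
\item increments that depend on which divisibility in (1) is forced, and that avoid the exceptional case;
\item an identification of the minimal admissible rectangles for arbitrary coprime $k_1,k_2$;
\item explicit fault-free tilings for those rectangles, and an explicit staircase insertion.
\end{itemize}
None of this is supplied.

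Note also that the statement as quoted needs $k_1\neq k_2$. For unit squares, the $3\times 3$ rectangle satisfies (1)--(3), since $3=1+2=2+1$, yet every tiling of it has faults. Any sufficiency argument must use this hypothesis somewhere.
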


  We also require the following structural result of Aggarwal and Ram \cite[Lemma~4]{MR4537778}.

  \begin{proposition}\label{prop:ARstructure}
Suppose $k<m<2k$ and $n>k$. In any vertically fault-free tiling of an $m\times n$ rectangle by $k\times 1$ tiles, there exist $k$ contiguous rows such that all tiles in the remaining $m-k$ rows are horizontal.
  \end{proposition}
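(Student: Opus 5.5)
Since the statement is quoted from Aggarwal and Ram \cite[Lemma~4]{MR4537778}, I would give a self-contained argument. The plan is to reduce everything to the claim that \emph{all vertical tiles in a vertically fault-free tiling start in the same row}.

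First, the rows themselves. Because $m<2k$, a column can contain at most one vertical tile. Every vertical tile occupies an interval of rows $[j,j+k-1]$ with $1\le j\le m-k+1$. Each such interval contains the nonempty block of \emph{middle rows} $m-k+1,\dots,k$. If the claim holds with common starting row $j$, then every tile lying in a row outside $[j,j+k-1]$ is horizontal, and the rows $j,\dots,j+k-1$ are the required $k$ contiguous rows. If there are no vertical tiles at all, any $k$ contiguous rows work.

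Suppose the claim fails. Scanning left to right, choose two vertical tiles in columns $c<c'$ with different starting rows $j<j'$ such that no column strictly between $c$ and $c'$ contains a vertical tile. Then all cells of columns $c+1,\dots,c'-1$ are covered by horizontal tiles. In a middle row, the cells of columns $c$ and $c'$ are both covered by vertical tiles. Hence the horizontal tiles of that row exactly tile columns $c+1,\dots,c'-1$, which forces $k\mid c'-c-1$ and gives breaks at $x=c$ and $x=c'-1$ in every middle row.

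Next I would look at the rows lying in only one of the two vertical intervals.
\begin{itemize}
\item Rows in $[j,j')$ are covered by the tile in column $c$ but not by the one in column $c'$. There the horizontal tiles start at column $c+1$, and since $k\mid c'-c-1$ they again have a break at $x=c'-1$.
\item Rows in $(j+k-1,j'+k-1]$ are covered by the tile in column $c'$ but not by the one in column $c$. Reading leftwards from $c'-1$, they have a break at $x=c$.
\end{itemize}
The remaining rows contain no vertical tile in columns $c$ or $c'$. There I would propagate alignment using the same argument at neighbouring pairs of vertical tiles, or use the leftmost and rightmost vertical tiles together with the rectangle boundary, whose first horizontal segment must have length divisible by $k$.

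The goal is to produce a line $x=c$ or $x=c'-1$ that no tile crosses, i.e.\ a vertical fault, which is a contradiction. This requires $0<c$ and $c'-1<n$ in the appropriate sense, and the hypothesis $n>k$ is what prevents the degenerate case in which the whole rectangle is a single column of width at most $k$.

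The main obstacle is the last step. The alignment constraints obtained from the rows covered by the vertical tiles at $c$ and $c'$ do not by themselves force a fault in the rows untouched by those two tiles. The rows in $[j,j')$ give breaks only at $c'-1$, and the rows in the other strip give breaks only at $c$.

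My first attempt would be a more global bookkeeping argument. For each row, record the residues mod $k$ of its break positions: pure horizontal rows have breaks exactly at multiples of $k$ from the left edge. Each vertical tile shifts the residue class of the rows it does not cover by the tile's column. Two adjacent vertical tiles with different offsets then force the residue classes of all rows to agree at some column, which produces a common break and hence a vertical fault.
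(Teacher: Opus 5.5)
\textbf{Gap.} The paper does not prove this statement. It quotes it from Aggarwal and Ram \cite[Lemma~4]{MR4537778}, so your argument has to stand on its own, and it does not yet do so. Several parts are correct:
\begin{enumerate}
\item the reduction to the claim that all vertical tiles start in the same row;
\item the middle-row computation giving $k\mid c'-c-1$;
\item the one-sided breaks at $x=c'-1$ and $x=c$.
\end{enumerate}
But the proof stops exactly where the content lies. You never exhibit a vertical fault, and the closing ``residue bookkeeping'' paragraph is a heuristic, not an argument. The obstacle you name is genuine for an \emph{arbitrary} adjacent pair. Rows outside $[j,j'+k-1]$, and the rows of $[j,j')$, can have their horizontal tiles shifted by vertical tiles lying further to the left. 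Nothing in the two-tile picture controls that.

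\textbf{Missing idea.} Take the pair at the \emph{leftmost} change of starting row. Then every vertical tile in a column $\le c$ starts in row $j$, so each row meets either all of them or none of them.

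First, show $k\mid c$. With $j<j'$ as in your setup, look at row $j'+k-1$. This row is covered by the tile in column $c'$, and by no vertical tile in columns $1,\dots,c'-1$. So its horizontal tiles start at $x=0$ and exactly fill columns $1,\dots,c'-1$, giving $k\mid c'-1$. Combined with $k\mid c'-c-1$, this gives $k\mid c$.

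Next, show $x=c$ is a break in every row. In a row meeting the tiles of starting row $j$, the cell in column $c$ belongs to a vertical tile, so $x=c$ is a tile boundary. In any other row there is no vertical cell in columns $1,\dots,c'-1$. Its horizontal tiles are therefore $[1,k],[k+1,2k],\dots$ up to the tile covering column $c'-1\ge c$, and again $x=c$ is a boundary, since $k\mid c$.

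Hence $x=c$ is a vertical fault with $0<c<c'\le n$, which is a contradiction. This step uses only your own setup plus the leftmost choice; no global residue tracking is needed. The hypothesis $n>k$ plays no role in it, contrary to your remark. The all-horizontal case is already trivial for the statement as phrased.
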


  We also record here the additional parts of Aggarwal and Ram's argument that we use below. Let $k>1$, let $k<m<2k$, and let $\ell\geq 1$. Then \cite[Proposition~5]{MR4537778} gives
  \[
  \#\{\text{vertically fault-free tilings of an $m\times k\ell$ rectangle}\}=
  \begin{cases}
  m-k+2, & \ell=1,\\[4pt]
  (m-k+1)\binom{k+\ell-3}{\ell-1}, & \ell\geq 2.
  \end{cases}
  \]
When $\ell=1$, there is exactly one tiling in which all tiles are horizontal and the other $m-k+1$ tilings contain precisely $k$ vertical tiles. For $\ell>1$, every vertically fault-free tiling contains precisely $m-k$ horizontal rows; after deleting those rows, the remaining $k\times k\ell$ rectangle corresponds bijectively with compositions $(\alpha_1,\ldots,\alpha_r)$ of $k\ell$ whose parts lie in $\{1,k\}$ and for which no proper partial sum is divisible by $k$. The bijection sends vertical tiles to 1, while a block of $k$ horizontal tiles maps to $k$. An example is shown in Figure \ref{fig:k3m5ell4}. Equivalently, we have $\alpha_1=\alpha_r=1,$ exactly $k$ parts of the composition $(\alpha_1,\ldots,\alpha_r)$ are equal to $1$, and every remaining part is equal to $k$. In particular, every vertically fault-free tiling of an $m\times k\ell$ rectangle with $\ell>1$ contains precisely $k$ vertical tiles \cite[Corollary~6]{MR4537778}.

  \begin{figure}[ht]
    \centering
    \begin{tikzpicture}[scale=0.55]
      \tikzset{tile/.style={fill=gray!30,draw=black}}
      \newcommand{\hTileThree}[2]{\fill[tile] (#1,#2) rectangle ++(3,1);}
      \newcommand{\vTileThree}[2]{\fill[tile] (#1,#2) rectangle ++(1,3);}

      \foreach \x in {0,3,6,9}{
        \hTileThree{\x}{4}
        \hTileThree{\x}{0}
      }

      \vTileThree{0}{1}
      \vTileThree{4}{1}
      \vTileThree{11}{1}
      \foreach \y in {1,2,3}{
        \hTileThree{1}{\y}
        \hTileThree{5}{\y}
        \hTileThree{8}{\y}
      }

    \end{tikzpicture}
    \caption{A vertically fault-free tiling for $k=3$, $m=5$, and $n=12$, with composition $(1,3,1,3,3,1)$.}
    \label{fig:k3m5ell4}
  \end{figure}

  \begin{proposition}[Klarner's divisibility criterion {\cite[Thm.~5]{MR248643}}]\label{prop:klarner-divisibility}
Let $m,n,k$ be positive integers. The $m\times n$ rectangle can be tiled by $k\times 1$ tiles if and only if at least one of $m$ and $n$ is divisible by $k$.
  \end{proposition}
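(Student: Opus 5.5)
The sufficiency direction is immediate: if $k\mid n$ we tile each of the $m$ rows of the $m\times n$ rectangle by $n/k$ horizontal $k\times 1$ tiles, and if $k\mid m$ we tile each of the $n$ columns by $m/k$ vertical tiles. The substance is the necessity direction. For that I will use the standard roots-of-unity weighting argument rather than any of the transfer-matrix or fault-line machinery developed earlier in the paper.

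Fix a primitive $k$th root of unity $\omega\in\CC$. Place the rectangle with its bottom-left corner at the origin as in Section~\ref{sec:central-horizontal-fault}, so that its unit cells are indexed by pairs $(i,j)$ with $0\le i<n$ and $0\le j<m$. Assign to cell $(i,j)$ the weight $\omega^{i+j}$. A horizontal $k\times 1$ tile occupies cells $(i_0+t,j_0)$ for $0\le t<k$. Its total weight is therefore $\omega^{i_0+j_0}(1+\omega+\cdots+\omega^{k-1})$, and this vanishes since $\omega\neq 1$ satisfies $\omega^k=1$. The same computation applies to a vertical tile. Hence if a tiling exists, the sum of the weights over all cells of the rectangle must be $0$.

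On the other hand, the total weight factors as
\[
\Bigl(\sum_{i=0}^{n-1}\omega^i\Bigr)\Bigl(\sum_{j=0}^{m-1}\omega^j\Bigr)=\frac{(1-\omega^n)(1-\omega^m)}{(1-\omega)^2}.
\]
Because $\omega$ is primitive of order $k$, the quantity $1-\omega^n$ vanishes exactly when $k\mid n$, and likewise $1-\omega^m$ vanishes exactly when $k\mid m$. Combining this with the previous paragraph, the existence of a tiling forces $k\mid m$ or $k\mid n$.

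I do not expect a genuine obstacle. The only points needing care are two small ones. First, the choice of weight $\omega^{i+j}$ must work for both tile orientations simultaneously; it does, because it depends symmetrically on both coordinates. Second, we need primitivity of $\omega$ to get the ``exactly when'' statement for the vanishing of $1-\omega^n$; a non-primitive root would only give a one-way implication. The degenerate case $k=1$ is trivial, since then every rectangle is tileable and $k$ divides everything.
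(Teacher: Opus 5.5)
Your proof is correct. Each $k\times 1$ tile, in either orientation, has weight $\omega^{i_0+j_0}(1+\omega+\cdots+\omega^{k-1})=0$ when $k\ge 2$. The total weight factors as $(1-\omega^n)(1-\omega^m)/(1-\omega)^2$. Primitivity of $\omega$ then converts $\omega^n=1$ or $\omega^m=1$ into $k\mid n$ or $k\mid m$. You were also right to set aside $k=1$, where $1-\omega=0$.

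The paper, however, gives no proof of this statement. It quotes Klarner's theorem as an external input, alongside Graham's theorem and the Aggarwal--Ram structural results. It uses both directions once in the introduction, to say when $\mathcal{T}(m,n;k)$ is nonempty. The argument for the main theorem needs only necessity, in Lemma~\ref{lem:noncentralshape}, to force $k\mid n$ once $k\nmid(2k-r)$.

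So your route differs in that it replaces a citation with a self-contained argument of a few lines. The citation keeps the exposition short and defers to the literature. Your roots-of-unity weighting makes the one nontrivial direction the paper relies on checkable on the spot. It needs none of the fault-line, transfer-matrix or tableau machinery developed in the paper, which is appropriate since none is needed.
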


\begin{proposition}\label{prop:noncentral}
The generating function for vertically fault-free tilings of a $2k\times n$ rectangle that do not have a central horizontal fault is
\[
U_k(x;a,b)=\frac{(k-1)a^kb^kx^k}{(1-b^{2k}x^k)^{k-1}}.
\]
  \end{proposition}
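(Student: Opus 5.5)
The plan is to classify vertically fault-free tilings $T$ of a $2k\times n$ rectangle with no horizontal fault at $y=k$. Splitting by whether $T$ has any horizontal fault at all, we then count each class using the Aggarwal–Ram structure.

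\emph{Step 1 (no fault-free tilings).} First we rule out tilings with no horizontal fault at all. Such a $T$ would be fault-free, so Theorem~\ref{thm:graham} applies with $(k_1,k_2)=(1,k)$ and $m=2k$. Condition (2) asks that $2k=u+vk$ with $u,v\ge 1$ in at least two ways. The only solution is $v=1$, $u=k$, since $v\ge 2$ forces $u\le 0$. Hence no fault-free tiling exists, except possibly in the degenerate case $mn\le k$, which is vacuous here. So every $T$ counted by $U_k$ has some horizontal fault at $y=j$ with $j\neq k$.

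\emph{Step 2 (reduction to a narrow strip).} Let $j$ be the horizontal fault of $T$. By the symmetry $y\mapsto 2k-y$ we may assume $j<k$. Then the pieces below and above $y=j$ have heights $j$ and $2k-j>k$. The piece of height $j<k$ contains no vertical tile, so it consists of horizontal rows. This forces $k\mid n$; write $n=k\ell$. The upper piece is an $m\times n$ rectangle with $k<m=2k-j<2k$. It must itself be vertically fault-free, since any vertical fault of it is also a vertical fault of $T$ (the lower rows are horizontal with breaks at multiples of $k$, so a vertical fault of $T$ would appear at a multiple of $k$).

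More precisely, $T$ is vertically fault-free if and only if the union of the two pieces has no common vertical fault. Since the bottom piece only has faults at multiples of $k$, it suffices that the upper piece has no faults at multiples of $k$.

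\emph{Step 3 (counting).} We now apply Proposition~\ref{prop:ARstructure} together with the Aggarwal–Ram description. The vertical tiles of the upper piece occupy $k$ contiguous rows, say $y\in[c,c+k)$ with $j\le c\le k$. The rows outside this band are all horizontal, and inside the band the tiling is given by a composition of $k\ell$ with exactly $k$ parts equal to $1$, $\alpha_1=\alpha_r=1$, and no proper partial sum divisible by $k$.

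In the whole $2k$-strip this means $T$ is determined by two pieces of data:
\begin{itemize}
\item the band position $c\in\{0,\dots,k\}$;
\item such a composition.
\end{itemize}
The condition of no fault at $y=k$ excludes $c=0$ and $c=k$, leaving $k-1$ band positions. The condition that some horizontal fault exists is automatic for $0<c<k$. One should also double-check that these tilings are counted exactly once when handled directly, rather than via $j$; classifying by the band avoids that issue.

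Weights: there are $k$ vertical tiles, contributing $a^k$. The horizontal tiles number $(2k\cdot k\ell-k^2)/k=2k\ell-k$, giving $b^{k(2\ell-1)}=b^k(b^{2k})^{\ell-1}$. The number of compositions is $\binom{k+\ell-3}{\ell-1}$, as in the displayed count with $m-k+1$ factored out. The $\ell=1$ case gives the single composition $(1^k)$, and its all-horizontal tiling has a central fault, so it is excluded. Therefore
\[
U_k=(k-1)a^kb^kx^k\sum_{\ell\ge1}\binom{k+\ell-3}{\ell-1}(b^{2k}x^k)^{\ell-1}=\frac{(k-1)a^kb^kx^k}{(1-b^{2k}x^k)^{k-1}},
\]
which is the claimed formula.

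The main obstacle is Step 2/3's bijection: we must verify that vertical fault-freeness of $T$ corresponds exactly to the composition condition for the band rows, with the surrounding horizontal rows breaking only at multiples of $k$. We must also confirm that the band rows indeed consist of vertical tiles only with full $k$-blocks, so that no tiling with $k<n$ non-multiple of $k$ sneaks in. The latter is ruled out by Proposition~\ref{prop:klarner-divisibility} applied to the height-$j$ strip.
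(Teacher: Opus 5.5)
Your argument is correct and is essentially the paper's proof. Graham's theorem forces a non-central horizontal fault, and the thin side then consists of horizontal rows, so $k\mid n$. The Aggarwal--Ram structure puts all vertical tiles in a band of $k$ rows, whose position $s\in\{1,\dots,k-1\}$ is forced by the absence of a central fault. Each band carries $\binom{k+\ell-3}{\ell-1}$ compositions, each of weight $a^kb^{(2\ell-1)k}$. The only differences are that you get this count by dividing the Aggarwal--Ram total by $m-k+1$ rather than re-deriving it as in Lemma~\ref{lem:stripcompositions}, and that you classify by band position rather than by the chosen fault.

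One small repair is needed. Your reason why the upper $(2k-j)\times n$ piece is vertically fault-free, which you need in order to invoke Proposition~\ref{prop:ARstructure}, only covers faults at multiples of $k$. A fault at any other position is excluded because it would split off a rectangle with neither side divisible by $k$, contradicting Proposition~\ref{prop:klarner-divisibility}.
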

  We divide the proof into two lemmas.

  \begin{lemma}\label{lem:noncentralshape}
Let $T$ be a vertically fault-free tiling of a $2k\times n$ rectangle by $k\times 1$ tiles with no central horizontal fault. Then $n=k\ell$ for some $\ell\geq 1$, and there exists an integer $s$ with $1\leq s\leq k-1$ such that rows $s+1,\ldots,s+k$ contain every non-horizontal tile of $T$, while the top $s$ rows and the bottom $k-s$ rows are tiled entirely by horizontal tiles.
  \end{lemma}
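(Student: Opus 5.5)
The plan is to first show that $T$ has no horizontal fault at all, so that Graham's theorem forces $k\mid n$, and then to locate the horizontal rows by examining horizontal faults at other heights.

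\textbf{Step 1: $T$ has no horizontal fault anywhere.} Suppose $T$ had a horizontal fault at $y=c$ with $c\neq k$, $0<c<2k$. Both pieces, of heights $c$ and $2k-c$, are tiled by $k\times 1$ bars. One of them, say the piece of height $c$, has height not divisible by $k$, so Klarner's criterion (Proposition~\ref{prop:klarner-divisibility}) gives $k\mid n$. If $c<k$, this piece must consist entirely of horizontal tiles, since a vertical tile has height $k>c$. A $c\times n$ strip of horizontal tiles has vertical faults at every multiple of $k$. The other piece, of height $2k-c$ with $k<2k-c<2k$, must then be examined as well.

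I therefore think the cleaner route is Proposition~\ref{prop:ARstructure}, not Step 1. Take the maximal horizontal faults of $T$; they split it into horizontal strips. Any strip of height less than $k$ is all-horizontal. A strip of height exactly $k$ adjacent to a fault at $y=k$ is excluded by hypothesis. Since $T$ is vertically fault-free, the union of the strips has no common vertical fault, although individual strips may.

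\textbf{Direct argument.} Since there is no central fault and $n\geq 1$, some tile crosses $y=k$. If it is horizontal it lies in a single row, so it cannot cross; hence it is vertical, say occupying rows $s+1,\dots,s+k$ with $1\leq s\leq k-1$. Now every vertical tile occupies $k$ consecutive rows among $2k$. Each column is a $2k$-cell segment covered by vertical tiles and pieces of horizontal tiles.

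The goal is to show that all vertical tiles share the same offset $s$. To see this, take the bottom $k-s$ rows $R$ and suppose some vertical tile with offset $t\neq s$ meets $R$. Consider the region consisting of rows $1,\dots,\min(s,t)$ together with the complementary rows; the portion of the tiling outside the vertical tiles has height not divisible by $k$. I would argue by a coloring or Klarner-type count, using the cells of color $i \bmod k$ along rows, that each row segment between consecutive vertical tiles in that row must have length divisible by $k$. Sliding column by column from the left edge, the set of rows covered by vertical tiles then propagates. The leftmost column must be covered by either two vertical tiles (offset $0$, which gives a central fault locally) or one vertical tile at offset $t$ plus horizontal tiles.

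Once a single offset $s$ is established, rows outside $s+1,\dots,s+k$ contain only horizontal tiles. Each such row is then tiled by $n/k$ horizontal bars, so $k\mid n$ and $n=k\ell$.

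\textbf{Main obstacle.} The step I expect to be hardest is proving the uniqueness of the offset. The plan is to reduce it to Proposition~\ref{prop:ARstructure}. Removing the top row or the bottom rows is not immediate because they may not be fault-separated. Instead I would argue by induction from the left: the region to the left of the first vertical tile of offset $\neq s$ is forced, and the mismatch then produces either a vertical fault or a central horizontal fault.
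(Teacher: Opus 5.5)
Your proposal has a genuine gap, and it is exactly the step you call the main obstacle: nothing in it proves that all vertical tiles share a single offset.

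The parts you do establish are correct. A tile crossing $y=k$ must be vertical with offset $s\in\{1,\dots,k-1\}$. A common offset forces every row outside the strip to be horizontal, whence $k\mid n$. The uniqueness step, however, is only gestured at:
\begin{itemize}
\item Your coloring observation, that each maximal horizontal run in a row has length divisible by $k$, is trivially true. It says nothing about how offsets in different columns interact.
\item The induction from the left is not well defined. Several tiles may cross $y=k$ with different offsets in $\{1,\dots,k-1\}$, and the first tile of offset $t\neq s$ may lie to the left of your chosen one.
\item Two vertical tiles stacked in the first column is not a contradiction; a local seam at $y=k$ is not a central fault.
\end{itemize}
Worse, your Step~1 aims at a false statement. $T$ always has a horizontal fault: the conclusion of the lemma itself exhibits faults at distance $s$ from the top and $k-s$ from the bottom. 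Graham's theorem is not a tool for deducing $k\mid n$ from fault-freeness.

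The missing idea is to use Theorem~\ref{thm:graham} the other way round. For $(m,n,k_1,k_2)=(2k,n,k,1)$, condition (2) fails, because $2k=1\cdot k+k\cdot 1$ is the only way to write $2k=uk+v$ with $u,v>0$. So no tiling of a $2k\times n$ rectangle is fault-free, and since $T$ is vertically fault-free it must have a horizontal fault. That fault is non-central, so after a reflection it lies at distance $r\in\{1,\dots,k-1\}$ from the top. This is precisely the fault-separation you said was ``not immediate.'' The rest then follows:
\begin{itemize}
\item The top $r$ rows are all horizontal, and Proposition~\ref{prop:klarner-divisibility} gives $k\mid n$.
\item The remaining $(2k-r)\times n$ region is vertically fault-free. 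Any vertical fault there lies at a multiple of $k$ by Klarner, so it is also a fault of the horizontal top strip and hence of $T$.
\item Since $k<2k-r<2k$, Proposition~\ref{prop:ARstructure} (for $\ell>1$) and the $\ell=1$ case of Aggarwal--Ram's Proposition~5 produce a strip of $k$ consecutive rows containing every vertical tile.
\item The absence of a central fault excludes the strip being the bottom $k$ rows.
\end{itemize}
Your paragraph on maximal horizontal faults was heading in this direction, but without Graham's theorem it has no fault to start from.
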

  \begin{proof}
Apply Theorem \ref{thm:graham} with $(m,n,k_1,k_2)=(2k,n,k,1)$. The second condition fails because $2k=1\cdot k+k\cdot 1$ is the only expression of $2k$ as $uk+v$ with $u,v>0$. Hence no tiling of a $2k\times n$ rectangle by $k\times 1$ tiles is fault-free. Since $T$ is vertically fault-free, it must therefore have a horizontal fault. Because this fault is not central, reflecting $T$ in a horizontal axis if necessary allows us to assume that $T$ has a horizontal fault at a distance $r$ from the top of the $2k\times n$ rectangle for some $1\leq r\leq k-1$.

The top $r$ rows of $T$ are tiled entirely by horizontal tiles since $r<k$. The remaining rows therefore form a vertically fault-free tiling of a $(2k-r)\times n$ rectangle, and
\[
k<2k-r<2k.
\]
Since $k\nmid (2k-r)$, Klarner's criterion implies that $n=k\ell$ for some $\ell\geq 1$.

If $\ell=1$, then the remaining $(2k-r)\times k$ rectangle is vertically fault-free and is not tiled entirely by horizontal bars, since otherwise $T$ itself would be completely horizontal and would have a central horizontal fault. By the aforementioned $\ell=1$ case of Aggarwal--Ram's Proposition~5, this remaining rectangle therefore contains exactly $(2k-r)-k=k-r$ horizontal rows and $k$ vertical tiles. Because a horizontal $k\times 1$ tile in a width-$k$ rectangle occupies an entire row, the rows not occupied by those $k-r$ horizontal rows form a unique block of $k$ consecutive rows filled by the vertical tiles. Together with the top $r$ horizontal rows already identified, this shows that all non-horizontal tiles of $T$ lie in a block of the form rows $s+1,\ldots,s+k$. Since $T$ has no central horizontal fault, this block is neither the top $k$ rows nor the bottom $k$ rows of the full rectangle. Thus it is of the form rows $s+1,\ldots,s+k$ with $1\leq s\leq k-1$ and the lemma follows.

If $\ell>1$, then Proposition \ref{prop:ARstructure} applied to the $(2k-r)\times k\ell$ rectangle shows that there exists a strip of $k$ contiguous rows such that all rows outside the strip are horizontal. Since $T$ has no central horizontal fault, this strip cannot be the top $k$ rows or the bottom $k$ rows of the full $2k\times k\ell$ rectangle. Therefore it is again of the form rows $s+1,\ldots,s+k$ with $1\leq s\leq k-1$ and the top $s$ rows and bottom $k-s$ rows are tiled entirely by horizontal tiles. This proves the lemma.
  \end{proof}

  \begin{lemma}\label{lem:stripcompositions}
Fix $\ell\geq 1$. Consider tilings of a $k\times k\ell$ rectangle by $k\times 1$ tiles that contain at least one vertical tile and have no vertical fault at any line $x=jk$ for $1\leq j\leq \ell-1$. The number of such tilings is
\[
\binom{k+\ell-3}{\ell-1}.
\]
  \end{lemma}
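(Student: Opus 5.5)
Tilings of a $k\times k\ell$ rectangle by $k\times 1$ tiles correspond bijectively to compositions of $k\ell$ with parts in $\{1,k\}$: scan from left to right, record a part $1$ for each vertical tile and a part $k$ for each block of $k$ stacked horizontal tiles. A vertical fault at $x=jk$ is exactly a partial sum equal to $jk$. So the tilings we want correspond to compositions $(\alpha_1,\ldots,\alpha_r)$ of $k\ell$ with parts in $\{1,k\}$, at least one part equal to $1$, and no proper partial sum divisible by $k$ among the values $k,2k,\ldots,(\ell-1)k$. Since all partial sums lie in $(0,k\ell)$, this is the same as requiring that no proper partial sum be divisible by $k$.

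\textbf{Structure of admissible compositions.} First, $\alpha_1=1$. If $\alpha_1=k$, then the partial sum $k$ occurs, which is forbidden when $\ell\ge 2$. When $\ell=1$ this would force the all-horizontal tiling, which is excluded by the requirement of a vertical tile. By the reversal symmetry, $\alpha_r=1$ as well.

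Now track the partial sums modulo $k$. Each part $k$ leaves the residue unchanged, and each part $1$ increases it by one. The residue begins at $0$ and must never return to $0$ before the end. It ends at $0$, since the total is $k\ell$. Hence the number of parts equal to $1$ must be exactly $k$: with fewer than $k$ ones the residue never returns to $0$, and with more than $k$ ones it returns to $0$ at a proper partial sum. Conversely, any sequence with exactly $k$ ones, with first and last parts equal to $1$, and with the remaining parts equal to $k$, keeps the residue in $\{1,\ldots,k-1\}$ strictly between the first and last parts, so it is admissible. This matches the description recorded after Proposition \ref{prop:ARstructure}.

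\textbf{Counting.} The total $k\cdot 1+k\cdot q=k\ell$ forces exactly $q=\ell-1$ parts equal to $k$. These parts must be placed in the $k-1$ gaps between consecutive ones, because $\alpha_1=\alpha_r=1$. The number of ways to distribute $\ell-1$ indistinguishable parts among $k-1$ gaps is
\[
\binom{(\ell-1)+(k-2)}{\ell-1}=\binom{k+\ell-3}{\ell-1}.
\]
For $\ell=1$ this gives $1$, namely the all-vertical tiling, which is indeed the unique tiling with a vertical tile and no interior constraint.

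The only delicate step is the residue argument that pins the number of ones to exactly $k$ and forces $\alpha_1=\alpha_r=1$, together with checking the edge case $\ell=1$. Beyond that, the count is a routine stars-and-bars computation.
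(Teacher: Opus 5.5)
Your proof is correct and follows the same route as the paper. Both arguments identify the admissible tilings with compositions of $k\ell$ into parts from $\{1,k\}$ that have exactly $k$ ones and $\alpha_1=\alpha_r=1$, and then count them. Your distribution of the $\ell-1$ parts equal to $k$ into the $k-1$ gaps is just the dual of the paper's placement of the $k-2$ interior ones among $k+\ell-3$ interior positions. The one difference is that the paper imports the characterization from the proof of Aggarwal--Ram's Proposition~5, whereas your residue-mod-$k$ argument derives it directly, which makes the lemma self-contained.
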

  \begin{proof}
By the quoted description from the proof of Aggarwal--Ram's Proposition~5, the tilings in the statement are exactly the $k\times k\ell$ cores corresponding under the standard bijection to compositions $(\alpha_1,\ldots,\alpha_t)$ of $k\ell$ with parts in $\{1,k\}$ such that $\alpha_1=\alpha_t=1$ with exactly $k$ parts equal to $1$ and every remaining part equal to $k$.

Such a composition has $k+\ell-1$ parts in total. After fixing the first and last parts equal to $1$, the remaining $k-2$ parts equal to $1$ may be placed arbitrarily among the $k+\ell-3$ internal positions. Therefore the number of admissible compositions is
\[
\binom{k+\ell-3}{k-2}=\binom{k+\ell-3}{\ell-1}.
\]
This proves the lemma.
  \end{proof}

  \begin{proof}[Proof of Proposition \ref{prop:noncentral}]
Fix a vertically fault-free tiling $T$ of a $2k\times n$ rectangle with no central horizontal fault. By Lemma \ref{lem:noncentralshape}, we may write $n=k\ell$ with $\ell\geq 1$, and there exists $s\in\{1,\ldots,k-1\}$ such that the top $s$ rows and bottom $k-s$ rows are horizontal while all non-horizontal tiles lie in the $k$ middle rows.

Removing those $k$ horizontal rows leaves a tiling of a $k\times k\ell$ rectangle. Because $T$ is vertically fault-free, this middle strip has no vertical fault at any line $x=jk$ for $1\leq j\leq \ell-1$. It also contains at least one vertical tile, for otherwise $T$ would be tiled entirely by horizontal bars and would have a central horizontal fault. Hence Lemma \ref{lem:stripcompositions} applies and gives
\[
\binom{k+\ell-3}{\ell-1}
\]
possible middle strips for each fixed value of $s$.

Moreover, every admissible composition in Lemma \ref{lem:stripcompositions} has exactly $k$ parts equal to $1$, so the corresponding middle strip contains $k$ vertical tiles, each spanning all $k$ rows. Thus every row of the strip contains a vertical tile, and the strip position $s$ is uniquely determined by $T$. Consequently the tilings counted by $u_{k,k\ell}(a,b)$ are in bijection with pairs $(s,\alpha)$ where $1\leq s\leq k-1$ and $\alpha$ is an admissible composition counted by Lemma \ref{lem:stripcompositions}.

For such a tiling, the middle strip contributes $k$ vertical tiles and $(\ell-1)k$ horizontal tiles, while the $k$ horizontal rows outside the strip contribute $k\ell$ horizontal tiles. Hence every such tiling has weight $a^kb^{(\ell-1)k+k\ell}=a^kb^{(2\ell-1)k}.$ It follows that
\[
u_{k,k\ell}(a,b)=(k-1)\binom{k+\ell-3}{\ell-1}a^kb^{(2\ell-1)k}
\qquad (\ell\geq 1),
\]
and $u_{k,n}(a,b)=0$ when $k\nmid n$. Therefore
\[
U_k(x;a,b)=\sum_{\ell\geq 1}(k-1)\binom{k+\ell-3}{\ell-1}a^kb^{(2\ell-1)k}x^{k\ell}.
\]
Setting $j=\ell-1$, we obtain
\[
U_k(x;a,b)=(k-1)a^kb^kx^k\sum_{j\geq 0}\binom{k+j-2}{j}(b^{2k}x^k)^j.
\]
The stated formula now follows from the binomial-series identity
\begin{align*}
\sum_{j\geq 0}\binom{r+j-1}{j}z^j&=(1-z)^{-r}. \qedhere
\end{align*}
  \end{proof}
We are now ready to prove Theorem \ref{thm:main} of the introduction.
  \begin{theorem}\label{thm:mainproof}
For each integer $k>1$, the generating function $F_k(x;a,b)$ is given by
\[
\frac{(1 - b^{2k}x^k)^{k-1}f_{k-1}(x;a,b^k)}
     {p_k(\sqrt{x};a,b^k)p_k(-\sqrt{x};a,b^k)(1 - b^{2k}x^k)^{k-1}f_{k-1}(x;a,-b^k)-(k-1)a^kb^kx^kf_{k-1}(x;a,b^k)},
\]
where $p_k(x;a,b)=1-ax-bx^k$.
  \end{theorem}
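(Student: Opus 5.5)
The proof assembles results already in hand. By Lemma \ref{lem:decompose}, $F_k=1/(1-W_k)$ with $W_k=V_k+U_k$, where $V_k$ counts vertically fault-free tilings with a central horizontal fault and $U_k$ counts those without one. By Proposition \ref{prop:Hkvkandhadamard}, $1/(1-V_k)=H_k(x)$ is the Hadamard square of $1/p_k(x;a,b^k)$. Applying Theorem \ref{thm:finalgf} with $N=k$ and $b$ replaced by $b^k$ gives
\[
\frac{1}{1-V_k}=\frac{f_{k-1}(x;a,b^k)}{p_k(\sqrt{x};a,b^k)\,p_k(-\sqrt{x};a,b^k)\,f_{k-1}(x;a,-b^k)}=:\frac{A}{B}.
\]
This substitution is legitimate: Theorem \ref{thm:finalgf} is a formal identity in the indeterminates $a,b$, so specializing $b\mapsto b^k$ preserves it. Coprimality is not needed at this stage.

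Next, $1-V_k=B/A$. Proposition \ref{prop:noncentral} gives $U_k=C/E$ with $C=(k-1)a^kb^kx^k$ and $E=(1-b^{2k}x^k)^{k-1}$. Then
\[
1-W_k=\frac{B}{A}-\frac{C}{E}=\frac{BE-CA}{AE},
\]
so
\[
F_k=\frac{AE}{BE-CA}.
\]
Substituting the expressions for $A,B,C,E$ yields exactly the displayed formula: the numerator is $(1-b^{2k}x^k)^{k-1}f_{k-1}(x;a,b^k)$, and the denominator is $p_k(\sqrt x)p_k(-\sqrt x)(1-b^{2k}x^k)^{k-1}f_{k-1}(x;a,-b^k)-(k-1)a^kb^kx^kf_{k-1}(x;a,b^k)$.

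Finally, all of this should be checked as an identity of formal power series in $x$ over $\QQ[a,b]$, or over $\QQ(a,b)$. Each series involved ($V_k$, $U_k$, $W_k$) has zero constant term, so every inversion is valid. The product $p_k(\sqrt{x})p_k(-\sqrt{x})$ is a genuine polynomial in $x$ because it is even in $\sqrt{x}$, and it has constant term $1$.

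The only real subtlety is the bookkeeping of weights when matching $H_k$ with the Hadamard product. Each block of $k$ horizontal tiles carries weight $b^k$ and each vertical tile carries weight $a$, which is why $b^k$ appears in $p_k(x;a,b^k)$. Once this is confirmed, the remaining step is routine algebra. The sample coefficients in the examples for $k=2,3$ give a sanity check.
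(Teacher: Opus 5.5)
Your proposal is correct and follows the paper's proof essentially step for step. It writes $F_k=1/(1-W_k)$ via Lemma~\ref{lem:decompose}, takes the Hadamard-square formula for $1/(1-V_k)$ from Proposition~\ref{prop:Hkvkandhadamard} and Theorem~\ref{thm:finalgf} with $N=k$ and $b\mapsto b^k$, takes the closed form of $U_k$ from Proposition~\ref{prop:noncentral}, and then clears denominators. Your extra remarks on why the specialization $b\mapsto b^k$ and the series inversions are legitimate (the relevant series have constant term $0$ or $1$) are accurate and make explicit what the paper leaves implicit.
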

  \begin{proof}
Let $W_k(x;a,b)=V_k(x;a,b)+U_k(x;a,b)$ be the generating function for all vertically fault-free tilings of a $2k\times n$ rectangle. By Lemma \ref{lem:decompose},
\[
F_k(x;a,b)=\frac{1}{1-W_k(x;a,b)}.
\]
By Proposition \ref{prop:Hkvkandhadamard},
\[
\frac{1}{1-V_k(x;a,b)}=\frac{1}{p_k(x;a,b^k)}*\frac{1}{p_k(x;a,b^k)}.
\]
Applying Theorem \ref{thm:finalgf} with $N=k$ and with $b$ replaced by $b^k$, we obtain
\[
\frac{1}{1-V_k(x;a,b)}
=\frac{f_{k-1}(x;a,b^k)}{p_k(\sqrt{x};a,b^k)p_k(-\sqrt{x};a,b^k)f_{k-1}(x;a,-b^k)}.
\]

Combining this identity with Proposition \ref{prop:noncentral} yields
\[
1-W_k(x;a,b)=\frac{p_k(\sqrt{x};a,b^k)p_k(-\sqrt{x};a,b^k)f_{k-1}(x;a,-b^k)}{f_{k-1}(x;a,b^k)}
-\frac{(k-1)a^kb^kx^k}{(1-b^{2k}x^k)^{k-1}}.
\]
Taking reciprocals and clearing denominators gives the desired formula. 
  \end{proof}
   
\section{Acknowledgements}
S. R. acknowledges with appreciation support from an Indo-Russian project, grant number DST/INT/RUS/RSF/P41/2021.
\printbibliography  
\end{document}